\def\f{\footnotesize}
\def\IZ{{\mathbb Z}}
\def\IR{{\mathbb R}}
\def\IP{{\mathbb P}}
\def\IN{{\mathbb N}}
\def\IL{{\mathbb L}}
\def\cA{{\cal A}}
\def\cB{{\cal B}}
\def\cC{{\cal C}}
\def\cD{{\cal D}}
\def\cF{{\cal F}}
\def\cI{{\cal I}}
\def\cK{{\cal K}}
\def\cL{{\cal L}}
\def\cS{{\cal S}}
\def\cU{{\cal U}}
\def\cV{{\cal V}}
\def\n{\noindent}
\def\dis{\displaystyle}
\def\ov{\overline}
\def\wt{\widetilde}
\def\wh{\widehat}
\def\dsl{\textstyle\sum\limits}
\def\r{\rightarrow}
\def\ve{\varepsilon}
\def\point{{\mbox{\large $.$}}}
\numberwithin{equation}{section}
\numberwithin{figure}{section}
\theoremstyle{plain}
\newtheorem{thm}{\protect\theoremname}[section]
  \theoremstyle{plain}
  \newtheorem{lem}[thm]{\protect\lemmaname}
  \theoremstyle{plain}
  \newtheorem{cor}[thm]{\protect\corollaryname}
  \theoremstyle{plain}
  \newtheorem{prop}[thm]{\protect\propositionname}
\newtheorem{remark}[thm]{Remark}
\theoremstyle{remark}
\title{\large{\textbf{SOLIDIFICATION OF POROUS INTERFACES  \\ AND DISCONNECTION}}}
\date{}
  \providecommand{\corollaryname}{Corollary}
  \providecommand{\lemmaname}{Lemma}
  \providecommand{\propositionname}{Proposition}
\providecommand{\theoremname}{Theorem}
\begin{document}
\maketitle \thispagestyle{empty}
\begin{center} \vspace{-0.7cm}  Maximilian Nitzschner and Alain-Sol Sznitman
\end{center}  
\begin{center}
\end{center}
\begin{abstract}
In this article we obtain uniform estimates on the absorption of Brownian motion by porous interfaces surrounding a compact set. An important ingredient is the construction of certain resonance sets, which are hard to avoid for Brownian motion starting in the compact set. As an application of our results, we substantially strengthen the results of \cite{Szni17}, and obtain when $d \ge 3$, large deviation upper bounds on the probability  that simple random walk in $\IZ^d$, or random interlacements in $\IZ^d$, when their vacant set is in a strongly percolative regime, disconnect the discrete blow-up of a regular compact set from the boundary of the discrete blow-up of a box containing the compact set in its interior. Importantly, we make no convexity assumption on the compact set. It is plausible, although open at the moment, that the upper bounds that we derive in this work match in principal order the lower bounds of \cite{LiSzni14} in the case of random interlacements, and of \cite{Li17} for the simple random walk.
\end{abstract}

\vspace{4cm}

\vfill
\noindent
{\small
-------------------------------- \\
Departement Mathematik, ETH Z\"urich, CH-8092 Z\"urich, Switzerland. \\
}

\newpage
\thispagestyle{empty} \mbox{}
\newpage  \pagestyle {plain}

\section{Introduction}

The notion of capacity has many facets. Given a compact subset $A$ of $\IR^d$, $d \ge 3$, its Brownian capacity ${\rm cap}(A)$ can for instance be viewed as a measure of its size for Brownian motion coming from far away, see \cite{PortSton78}, p.~58. But it also equals the infimum of the Dirichlet energies of smooth compactly supported functions equal to $1$ on a neighborhood of $A$, see \cite{Szni98a}, p.~87, and also alternatively the inverse of the infimum of the energies of probability measures supported by $A$, see \cite{Szni98a}, p.~76. 
It is a classical fact of potential theory that any compact set $S$ that separates $A$ from infinity has a Brownian capacity ${\rm cap}(S)$, which is bounded from below by ${\rm cap}(A)$. However, when such an ``interface'' $S$ is replaced by a ``porous deformation'' $\Sigma$, uniform comparisons between ${\rm cap}(A)$ and ${\rm cap}(\Sigma)$ become more delicate, in part due to the possible degenerations of the interfaces and porous interfaces under consideration. When $A$ is convex, the problem can sometimes be circumvented with the help of the projection attached to $A$, but the issue becomes acute when no convexity assumption is made on $A$. The challenge is then to bring into play a notion of porous interfaces, which has meaningful consequences, and is relevant for applications.

In this article we make no convexity assumption on $A$. We introduce a notion of {\it porous interfaces} $\Sigma$ surrounding $A$, and obtain uniform estimates for the absorption by  $\Sigma$ of Brownian motion starting in $A$. These {\it solidification estimates} readily lead to uniform comparisons between ${\rm cap}(A)$ and ${\rm cap}(\Sigma)$. An important ingredient in proving such solidification estimates is the construction of certain {\it resonance sets}, which are hard to avoid for Brownian motion starting in $A$, and where on many scales the local densities of the interior and the exterior of the {\it segmentation} underlying $\Sigma$ remain balanced.

As an application of these estimates, we are able to substantially strengthen the results of \cite{Szni17} and obtain large deviation upper bounds on the probability that simple random walk in $\IZ^d$, or random interlacements in $\IZ^d$, when their vacant set is in a strongly percolative regime, disconnect the discrete blow-up of a regular compact set $A$ from the boundary of the discrete blow-up of a box containing $A$ in its interior. Whereas the results of \cite{Szni17} handled the case when $A$ is itself a box, and the methods of \cite{Szni17} might conceivably have been extended to handle the case of a regular compact convex set $A$, the case treated here, when $A$ is a regular compact set, requires a genuinely new approach to the coarse graining procedure, which is employed. Quite plausibly, the upper bounds that we derive in this work are sharp and match in principal order the large deviation lower bounds obtained in \cite{Li17} and \cite{LiSzni14}.

We will now describe our results in a more precise form. We consider $\IR^d$, mainly when $d \ge 3$ (although $d \ge 2$ throughout Section 1). We consider a non-empty compact subset $A$ of $\IR^d$, and a collection of ``interfaces'' $S = \partial U_0 = \partial U_1$, where $U_0$ is a bounded Borel subset of $\IR^d$ and $U_1$ its complement. To control the ``distance'' of $A$ to $U_1$, we define for $\ell_*$ non-negative integer
\begin{equation}\label{0.1}
\begin{split}
\cU_{\ell_*,A} = & \;\mbox{the collection of bounded Borel subsets $U_0$ for which the}
\\[-0.5ex]
&\; \mbox{local density of $U_1$ for any box centered at a point of $A$ with}
\\[-1ex]
&\; \mbox{side-length $2 \cdot 2^{-\ell}$ is at most $\frac{1}{2}$ when $\ell \ge \ell_*$}
\end{split}
\end{equation}
(this last condition is for instance satisfied when the sup-distance of $A$ to $U_1 = \IR^d \backslash U_0$ exceeds $2^{-\ell_*}$).

\bigskip
At a heuristic level the ``interface'' $S = \partial U_0 = \partial U_1$ can be viewed as a kind of segmentation of the ``porous interfaces'', which we now introduce. For a given $U_0 \in \cU_{\ell_*,A}$, and $\ve > 0$, $\eta \in (0,1)$ respectively measuring the distance from $S$ at which the porous interface is felt, and the strength with which it is felt, we consider in the hard obstacle case
\begin{equation}\label{0.2}
\begin{split}
\cS_{U_0,\ve,\eta} = & \;\mbox{the collection of compact subsets $\Sigma$ of $\IR^d$ such that}
\\
&\; \mbox{$P_x[H_\Sigma < \tau_\ve] \ge \eta$ for all $x \in S$ ($= \partial U_0$)},
\end{split}
\end{equation}

\bigskip
\n
where $P_x$ stands for the Wiener measure starting from $x$, governing the canonical Brownian motion $X$ on $\IR^d$, $H_\Sigma = \inf\{s \ge 0; X_s \in \Sigma\}$ for the entrance time of $X$ in $\Sigma$, and $\tau_\ve = \inf\{s \ge 0; |X_s - X_0|_\infty \ge \ve\}$ for the first time $X$ moves at sup-distance $\ve$ from its starting point.

\psfrag{s}{$S$}
\psfrag{e}{$\varepsilon$}
\psfrag{A}{$A$}
\psfrag{S}{$\Sigma$}
\psfrag{U0}{$U_0$}
\psfrag{U1}{$U_1$}
\psfrag{2ell}{$2^{-\ell_*}$}
\begin{center}
\includegraphics[width=13cm]{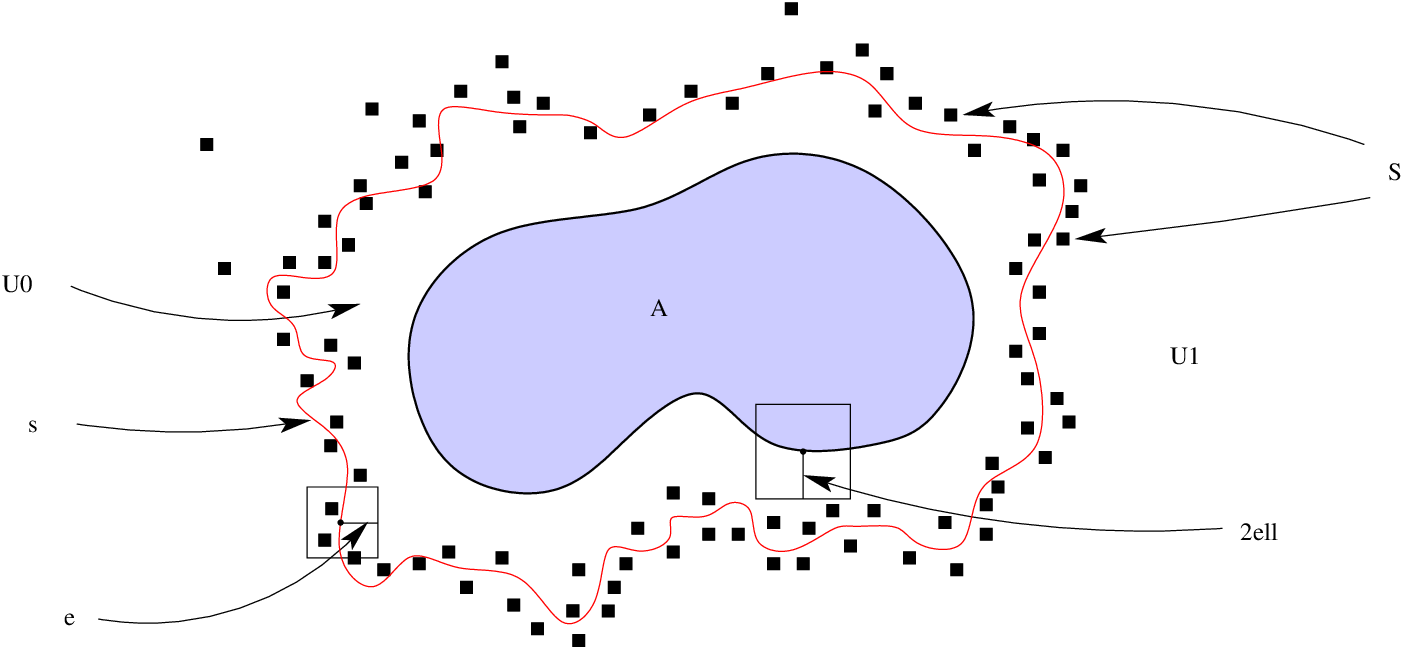}
\end{center}

\bigskip
\begin{center}
Fig.~1: An illustration of a $U_0$ in $\cU_{\ell_*,A}$ and $\Sigma$ in $\cS_{U_0,\ve,\eta}$
\end{center}

\bigskip
In the soft obstacle case, we instead consider

\begin{equation}\label{0.3}
\begin{split}
\cV_{U_0,\ve,\eta} = & \;\mbox{the collection of non-negative, locally bounded},
\\[-0.5ex]
&\; \mbox{measurable functions $V$ on $\IR^d$, such that}
\\
&\mbox{$E_x [\exp \{-\int^{\tau_\ve}_0 V(X_s) \,ds\}] \le 1-\eta$, for all $x \in S$ ($= \partial U_0$)},
\end{split}
\end{equation}

\bigskip\n
where $E_x$ stands for the expectation relative to the measure $P_x$.

\bigskip\medskip
Asymptotic solidification estimates play a central role in this work. They appear in Theorem \ref{theo3.1} and for given $A$ and $\eta$ they provide controls in the limit where $\ve/2^{-\ell_*}$ tends to zero, on the trapping probability of Brownian motion starting in $A$ by the porous interface, uniformly over $\Sigma \in \cS_{U_0,\ve,\eta}$ or $V \in \cV_{U_0,\ve,\eta}$, $U_0 \in \cU_{\ell_*,A}$, and over the starting point $x$ in $A$. Namely, for given $A$ and $\eta$, we show that
\begin{equation}\label{0.4}
\lim\limits_{u \r 0} \;\; \wt{\sup}  \;P_x [H_\Sigma = \infty] = 0
\end{equation}
(where $\wt{\sup}$ stands for the supremum over $\ve \le u\, 2^{-\ell_*}$, $U_0 \in \cU_{\ell_*,A}$, $\Sigma \in \cS_{U_0,\ve,\eta}$, and $x \in A$), and
\begin{equation} \label{0.5}
\lim\limits_{u \r 0} \;\; \wh{\sup} \;E_x \Big[\exp\Big\{- \dis\int^\infty_0 V(X_s)\,ds\Big\}\Big] = 0 
\end{equation}

\smallskip\n
(where $\wh{\sup}$ stands for the supremum over $\ve \le u\, 2^{-\ell_*}$, $U_0 \in \cU_{\ell_*,A}$, $V\in \cV_{U_0,\ve,\eta}$, and $x \in A$), and actually, the quantities under the ``$\lim_{u \r 0}$'' sign in both (\ref{0.4}) and (\ref{0.5}) are maximal when $A = \{0\}$ (see also Remark \ref{rem3.6} 2) for a reformulation of (\ref{0.4}), (\ref{0.5}), using scaling when $A = \{0\}$).

\medskip
As an application of (\ref{0.4}), we show in Corollary \ref{cor3.4} an asymptotic lower bound on capacity that plays a pivotal role in our treatment of the disconnection problems that we consider in Section 4. Namely when ${\rm cap}(A) > 0$, with a similar meaning as below (\ref{0.4}), one has
\begin{equation}\label{0.6}
 \lim\limits_{u \r 0} \;\; \wt{\inf} \; {\rm cap}(\Sigma) / {\rm cap}(A)= 1.
 \end{equation}
 
\medskip \n
 Importantly, no convexity assumption is made on $A$. When $A$ is convex, asymptotic lower bounds on ${\rm cap}(\Sigma)$ are often easier to achieve, as explained in Remark \ref{rem3.6} 3).
 
\bigskip
As a (straightforward) illustration of (\ref{0.5}), we consider the time spent by Brownian motion in the $\ve$-neighborhood of $S$ $(= \partial U_0)$ for the sup-distance, and show in Corollary \ref{cor3.5} that for any $a > 0$ and $\ell_* \ge 0$, 
 \begin{equation}\label{0.7}
\lim\limits_{\ve \r 0} \;\; \sup\limits_{U_0 \in \,  \cU_{\ell_*,A}} \;\;\sup\limits_{x \in A} \;\;  E_x \Big[\exp\Big\{- \mbox{\f $\dis\frac{a}{\ve^2}$} \;\dis\int^\infty_0 1\{d(X_s, S) \le \ve\}\,ds\Big\} \Big] 
= 0.
 \end{equation}
 
\medskip \n
The difficulty in proving (\ref{0.4}) (or (\ref{0.5})) stems from the fact that $U_0$ varies over the whole class $\cU_{\ell_*,A}$ and the interface $S$ as well as the porous interface $\Sigma$ may undergo degenerations and become brittle in certain parts of space (where, for instance, they can behave as a soft potential due to an effect of a ``constant capacity regime'', see Remark \ref{rem3.6} 1)).
 
\medskip
An important ingredient in the proof of Theorem \ref{theo3.1} (cf.~(\ref{0.4}), (\ref{0.5})) is the construction of certain resonance sets, where on well-separated spatial scales the local densities of both $U_0$ and its complement $U_1$ are non-degenerate. Specifically, given $U_0$ and integers $J \ge 1$, $L \ge L(J)$ (see (\ref{1.27})) and $I \ge 1$ the resonance set is (see (\ref{2.5}))
\begin{equation}\label{0.8}
{\rm Res} = \Big\{x \in \IR^d; \dsl_{\ell \in \cA_*} 1\{\wt{\sigma}_\ell(x) \in [\wt{\alpha}, 1 - \wt{\alpha}]\} \ge J\Big\},
\end{equation}

\medskip\n
where $\wt{\alpha} = \frac{1}{3} \,4^{-d}$ is a dimensional constant, $\wt{\sigma}_\ell(x)$ denotes the local density of $U_1$ in the closed box $B(x,4 \cdot 2^{-\ell})$ of center $x$ and side-length $8 \cdot 2^{-\ell}$, and $\ell$ ranges over $\cA_* \subseteq L \IN$, which in essence, see (\ref{2.3}) for the precise definition, consists of the first $I(J+1)$ integers in $L \IN$ that are bigger or equal to $\ell_*$.

\bigskip
The resonance set need not separate $A$ from infinity (see Remark \ref{rem2.4}~1)), but crucially, as shown in Theorem \ref{theo2.1}, it is hard for Brownian motion starting in $A$ to avoid the resonance set when $I$ is large. We even obtain stretched-exponential bounds on the avoidance probability and show in Theorem \ref{theo2.1} that for any $J \ge 1$,
\begin{align}
&\underset{I \r \infty}{\ov{\lim}} \;I^{-1/2^{J-1}} \log \big(\sup\limits_{\ell_* \ge 0} \;\;\sup\limits_{U_0 \in \cU_{\ell_*,A}} \;\; \sup\limits_{L \ge L(J)} \;\; \sup\limits_{x \in A} \;P_x[H_{\rm Res} = \infty]\big) \le - c(J) < 0 \label{0.9}
\\
&\mbox{(with $H_{\rm Res} = \inf\{s \ge 0; X_s \in {\rm Res}\}$ the entrance time of $X$ in the} \nonumber
\\[-0.5ex]
&\;\mbox{resonance set Res)}. \nonumber
\end{align}

\n
Actually, as shown in Theorem \ref{theo2.1}, the quantity under the logarithm is maximal when $A = \{0\}$.

\medskip
Equipped with the crucial estimate (\ref{0.9}), we can use the resonance set in (\ref{0.8}) as a substitute for a Wiener criterion, and infer that the porous interfaces $\Sigma$, or the soft obstacles $V$, whose presence is felt in the $\ve$-vicinity of $S = \partial U_0 = \partial U_1$, have massive trapping power as quantified by (\ref{0.4}) and (\ref{0.5}), see Theorem \ref{theo3.1} and its proof.

\medskip
In Section 4, we apply Corollary \ref{cor3.4} (see (\ref{0.6})), to a disconnection problem for simple random walk on $\IZ^d$, or for random interlacements on $\IZ^d$, when their vacant set $\cV^u$ is in the strongly percolative regime $u \in (0,\ov{u})$, with $\ov{u}$ the critical level from \cite{Szni17}, cf.~(\ref{2.3}) in this reference. We consider a non-empty compact subset $A$ of $\IR^d$ contained in the interior of a box of side-length $2M$ centered at the origin, as well as the discrete blow-up of $A$ and the interior boundary of the discrete blow-up of the above mentioned box:
\begin{equation}\label{0.10}
A_N = (NA) \cap \IZ^d \;\; \mbox{and} \;\; S_N = \{x \in \IZ^d; |x|_\infty = [MN]\}
\end{equation}
(where $[\cdot]$ denotes the integer part).

\bigskip
In the case of random interlacements, we are interested in the disconnection event corresponding to the absence of paths in $\cV^u$ between $A_N$ and $S_N$, denoted by
\begin{equation}\label{0.11}
\cD^u_N = \{A_N \stackrel{\cV^u}{\longleftrightarrow} \hspace{-3ex} \mbox{\f $/$} \quad S_N\}
\end{equation}

\medskip\n
($\cD^u_N$ coincides with the full sample space if $A_N$ happens to be empty). 

\bigskip
When the compact set $A$ is regular in the sense that
\begin{equation}\label{0.12}
{\rm cap}(A) = {\rm cap}(\mathring{A}) \quad \mbox{(with $\mathring{A}$ the interior of $A$)},
\end{equation}

\medskip\n
we show that for $u \in (0,\ov{u})$, that is in the strongly percolative regime for $\cV^u$, one has
\begin{equation}\label{0.13}
\limsup\limits_N \;\;\mbox{\f $\dis\frac{1}{N^{d-2}}$} \; \log \IP [\cD^u_N] \le - \mbox{\f $\dis\frac{1}{d}$} \; (\sqrt{\ov{u}} - \sqrt{u})^2 \,{\rm cap}(A).
\end{equation}
Our results are actually more general, see Theorem \ref{theo4.1} and also Remark \ref{rem4.5} 3).

\bigskip
In the case of the simple random walk, we similarly consider the vacant set $\cV$ that is the complement of the set of sites visited by the canonical walk on $\IZ^d$. We are then interested in the disconnection event corresponding to the absence of paths in $\cV$ between $A_N$ and $S_N$
\begin{equation}\label{0.14}
\cD_N = \{A_N \stackrel{\cV}{\longleftrightarrow} \hspace{-3ex} \mbox{\f $/$} \quad S_N\}
\end{equation}

\medskip\n
(and $\cD_N$ coincides with the full canonical space if $A_N$ happens to be empty).

\medskip
As a rather straightforward consequence of (\ref{0.13}) (this formally corresponds to letting $u \r 0$), we show in Corollary \ref{cor4.4} that when $A$ fulfills the regularity condition (\ref{0.12}), then for any $x$ in $\IZ^d$
\begin{equation}\label{0.15}
\limsup\limits_N \;\; \mbox{\f $\dis\frac{1}{N^{d-2}}$} \; \log P_x^{\IZ^d} [\cD_N] \le - \mbox{\f $\dis\frac{\ov{u}}{d}$} \;{\rm cap}(A)
\end{equation}
(with $P_x^{\IZ^d}$ the canonical law of simple random walk starting at $x$).

\bigskip
Again, our results are more general, see Corollary \ref{cor4.4} and Remark \ref{rem4.5} 3). Actually, it is plausible that the upper bounds (\ref{0.13}), (\ref{0.15}) catch the correct principal exponential decay of the probabilities under consideration, and match the lower bounds respectively derived in \cite{LiSzni14} and \cite{Li17}. This feature rests on the identification (open at the moment) of $\ov{u}$ with the box-to-box critical level $u_{**}$ for the percolation of the vacant set of random interlacements, so that one would then have the equalities $\ov{u} = u_\ast = u_{\ast\ast}$, with $u_*$ the critical level for the percolation of the vacant set of random interlacements. Incidentally, the recent work \cite{DumiRaouTass17} might lead to some progress towards a proof of $u_* = u_{**}$. The coincidence of the three critical levels would then show that under (\ref{0.12}), 
\begin{align*}
&\lim\limits_{N \r \infty} \;  \mbox{\f $\dis\frac{1}{N^{d-2}}$} \; \log \IP [\cD^u_N] = -\mbox{\f $\dis\frac{1}{d}$} \;(\sqrt{u}_* - \sqrt{u})^2 \, {\rm cap} (A), \;\mbox{for $0 < u < u_*$},
\\
\intertext{and}
\\[-4ex]
&\lim\limits_{N \r \infty} \;  \mbox{\f $\dis\frac{1}{N^{d-2}}$} \; \log P_x^{\IZ^d} [\cD_N] = - \mbox{\f $\dis\frac{u_*}{d}$} \; {\rm cap}(A), \;\mbox{for $x$ in $\IZ^d$}.
\end{align*}

\bigskip
The lower bounds in \cite{LiSzni14} for the case of random interlacements, and \cite{Li17} for the case of simple random walk are based on the change of probability method. They respectively involve probability measures $\wt{\IP}_N$ (in the case of random interlacements) and $\wt{P}_N$ (in the case of simple random walk) implementing certain ``strategies'' to produce disconnection. If the critical values $\ov{u} \le u_* \le u_{**}$ actually coincide, the results of the present work show that these strategies are (nearly) optimal and thus hold special significance. We briefly sketch what these strategies are.

\medskip
In the case of random interlacements, the measures $\wt{\IP}_N$ from \cite{LiSzni14} correspond to so-called {\it tilted interlacements} that are slowly space-modulated interlacements at a level (slowly varying over space) equal to $f^2_N(x) = (\sqrt{u} + (\sqrt{u}_{**} - \sqrt{u}) \, h(\frac{x}{N}))^2$, $x \in \IZ^d$, where $h$ on $\IR^d$ is the solution of the equilibrium problem
\begin{equation}\label{0.16}
\left\{ \begin{array}{l}
\mbox{$\Delta h = 0$ outside $\wt{A}$}~,
\\[1ex]
\mbox{$h = 1$ on $\wt{A}$ and $h$ tends to zero at infinity},
\end{array}\right.
\end{equation}

\n
with $\wt{A}$ a slight thickening of $A$. Informally, the tilted interlacements create a ``fence'' around $A_N$ on which they locally behave as interlacements at level $u_{**}$ (one actually chooses $u_{**} + \delta$ in place of $u_{**}$ in the formula for $f_N$, with $\delta$ a small positive number). They produce locally on this fence a strongly non-percolative regime for the vacant set, and typically disconnect $A_N$ from $S_N$, when $N$ is large. Informally, the tilted interlacements correspond to a Poisson cloud of bilateral trajectories, which are governed by the generator $\wt{L}_N \,g(x) = \frac{1}{2d} \sum_{|x' - x| = 1} \frac{f_N(x')}{f_N(x)} \,(g(x') - g(x))$, instead of the discrete Laplacian (corresponding to the case when $f_N$ is a constant function).

\bigskip
In the case of the simple random walk, the measures $\wt{P}_N$ from \cite{Li17} correspond to {\it tilted walks} that in essence behave as a walk with generator $\ov{L}_N \,g(x) =$ \linebreak $\frac{1}{2d} \sum_{|x' - x| = 1} \frac{h_N(x')}{h_N(x)} \,(g(x') - g(x))$ up to a deterministic time $T_N$ and afterwards move as a simple random walk, where $h_N(x) = h(\frac{x}{N})$, with $h$ as in (\ref{0.16}) and $T_N$ is chosen such that the expected time spent by the tilted walk at a point $x \in A_N$ amounts to $(u_{**} + \delta) \,h^2_N(x) = u_{**} + \delta$ (with the choice of $h$ in (\ref{0.16})). Again, this creates a ``fence'' around $A_N$, where the vacant set left by the tilted walk at time $T_N$ is locally in a strongly non-percolative regime, and typically disconnects $A_N$ from $S_N$, for large $N$ (one actually chooses a compactly supported approximation of $h$ in (\ref{0.16}) for the definition of the tilted walk).

\bigskip
Let us emphasize that no convexity assumption is made on $A$ in the derivation of the upper bounds (\ref{0.13}), (\ref{0.15}). This represents an important progress on \cite{Szni17}, where $A$ was assumed to be a box. We use a different approach to the coarse graining of the disconnection event $\cD^u_N$. The notion of porous interfaces and Corollary \ref{cor3.4} (cf.~(\ref{0.6})) play a pivotal role in the derivation of (\ref{0.13}). The scale $\ve$ in (\ref{0.6}) roughly corresponds to $\frac{\wh{L}_0}{N}$, cf.~(\ref{4.19}), where $\wh{L}_0$ is a scale slightly smaller than $N$, on which we perform a ``segmentation'' of an interface of ``blocking boxes'' of a much smaller scale $L_0$, slightly bigger than $(N \log N)^{\frac{1}{d-1}}$, cf.~(\ref{4.19}), which is present when the disconnection event occurs. In a sense we follow a refined version of the strategy for the tracking of interfaces underpinning Section 2 of \cite{DembSzni06}. After removal of a ``bad event'' of negligible probability from the disconnection event $\cD^u_N$, we obtain the effective disconnection event $\wt{\cD}^u_N$, cf.~(\ref{4.41}).  We partition $\wt{\cD}^u_N$ into a not too large collection of events $\cD_{N,\kappa}$ (where $\kappa$ runs over a set of $\exp\{ o(N^{d-2})\}$ elements). This partition embodies the coarse graining, cf.~(\ref{4.45}). In essence, each event $\cD_{N,\kappa}$ corresponds to the selection of discrete ``blocking boxes'' of side-length $L_0$ having ``substantial presence'' in each of the selected boxes of side-length $2\wh{L}_0$ (see Figure 2). If $C$ denotes the union of all selected boxes of side-length $L_0$, we have an exponential estimate on the probability of each $\cD_{N,\kappa}$ in terms of ${\rm cap}_{\IZ^d}(C)$, cf. (\ref{4.14}). With the help of Proposition \ref{propA.1} in the Appendix, we can in essence replace ${\rm cap}_{\IZ^d}(C)$ by $\frac{1}{d}N^{d-2} {\rm cap}(\Sigma)$, where $\Sigma$ (the porous interface) corresponds to the solid $\IR^d$-filling of $\frac{C}{N}$. At this point crucially Corollary \ref{cor3.4} enables us to obtain an asymptotic uniform lower bound of ${\rm cap}(\Sigma)$ in terms of ${\rm cap}(A)$. In our application of Corollary \ref{cor3.4}, it should be underlined that both the interface $S$ and the porous interface $\Sigma$ are ``constructs of the coarse graining procedure'', cf.~(\ref{4.48}), (\ref{4.49}) (in particular $S$ is by no means a priori given). We refer to Section 4 below (\ref{4.26}) for a more detailed outline of the proof of Theorem \ref{theo4.1} (cf.~(\ref{0.13})).

\bigskip
\psfrag{AN}{$A_N$}
\psfrag{SN}{$S_N$}
\psfrag{L0}{\small{$2\widehat{L}_0$}}
\psfrag{LaN}{\small{$\widehat{L}_0$ slightly smaller than $N$}}
\psfrag{Lsk}{\small{$L_0$ slightly bigger than $(N \log N)^{\frac{1}{d-1}}$}}
\includegraphics[width=8cm]{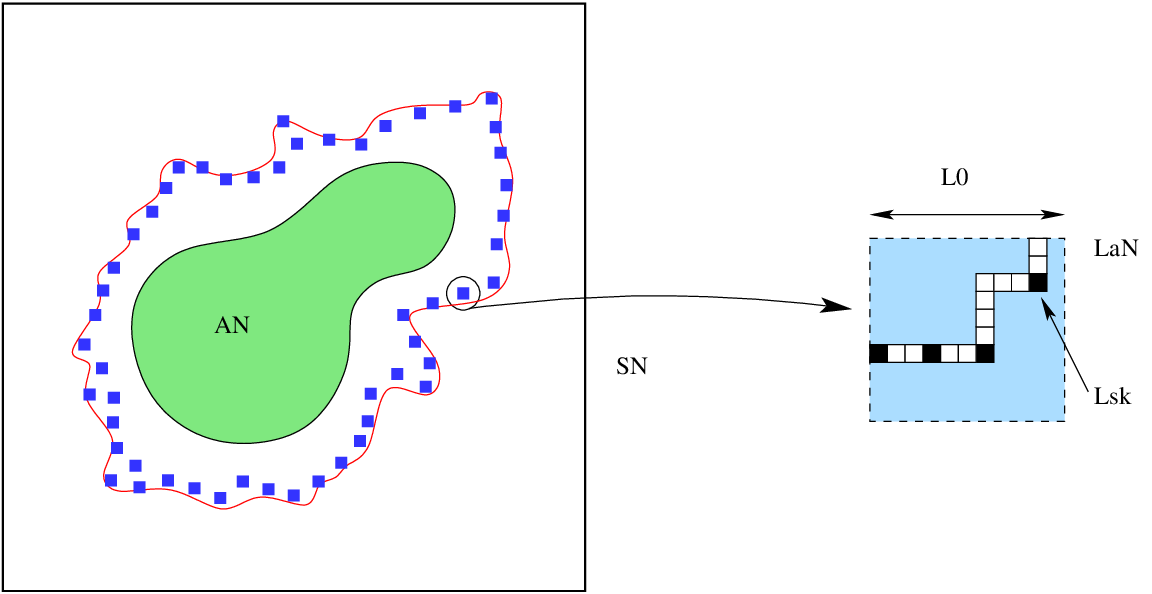}

\bigskip
\begin{center}
\begin{tabular}{lp{12cm}}
Fig.~2: & An illustration of some features entering the definition of the event $\cD_{N,\kappa}$ with the selected boxes of side-length $2\wh{L}_0$ on the 
left-hand side, and the blow-up of one such box with the selected boxes of side-length $L_0$ (in black), on the right-hand side.
 \end{tabular}
\end{center}

\bigskip
Plausibly, the methods of this article might be pertinent in the context of level-set percolation of the Gaussian free field and lead to an improvement of the results of \cite{Szni15}, see Remark \ref{rem4.5} 1). One may also wonder whether in the spirit of the Wulff shape theory for the existence of a large finite cluster at the origin in supercritical Bernoulli percolation, cf.~\cite{Cerf00} and the references therein, some insight might be gained concerning the behavior of a large finite connected component at the origin for the vacant set of random interlacements $\cV^u$ in the strongly percolative regime $u \in (0,\ov{u})$.

\medskip
Let us now describe the organization of this article. Section 1 collects several results concerning the local density functions. In particular, Proposition \ref{prop1.4} provides in essence a lower bound for the probability that Brownian motion enters the resonance set when starting at a good point. In Section 2, the main Theorem \ref{theo2.1} shows that when starting in $A$, Brownian motion can hardly avoid the resonance set, cf.~(\ref{0.9}). Lemma \ref{lem2.2} contains an important induction step for the proof of Theorem \ref{theo2.1}. In Section 3, we introduce the notion of porous interfaces and show the central solidification estimates in Theorem \ref{theo3.1}, see also (\ref{0.4}), (\ref{0.5}). We then provide a first application with the capacity lower bound (\ref{0.6}) in Corollary \ref{cor3.4} and a second (and quicker) application to the estimate (\ref{0.7}) in Corollary \ref{cor3.5}. In section 4, we derive with the help of Corollary \ref{cor3.4} the large deviation upper bounds (\ref{0.13}), (\ref{0.15}) on the probability of the disconnection events (\ref{0.11}), (\ref{0.14}) in Theorem \ref{theo4.1} and Corollary \ref{cor4.4}. Additional estimates appear in Remark \ref{rem4.5} 3). In the Appendix, we provide in Proposition \ref{propA.1} an asymptotic comparison between the simple random walk capacity of arbitrary finite unions of well-separated large boxes and the Brownian capacity of the solid filling of these boxes.

\medskip
Finally, let us explain our convention concerning constants. We denote by $c,c',\wt{c}$ positive constants changing from place to place that simply depend on $d$. Numbered constants such as $c_0,c_1,c_2,\dots$ refer to the value corresponding to their first appearance in the text. Dependence of constants on additional parameters appears in the notation.

\section{Local density functions}

In this section we develop several controls concerning certain local density functions that will help us track the presence of interfaces. The main result is contained in Proposition \ref{prop1.4}. In essence, it provides a lower bound on the probability that Brownian motion starting at a point where the local density on a certain scale is well-balanced, visits before moving at a distance comparable to that scale a point where on several well-separated smaller scales the local densities are well-balanced. This result will be of direct use in the proof of Theorem \ref{theo2.1} in the next section. We begin with some notation.

\medskip
Throughout this section we assume that $d \ge 2$. We denote by $|x|_\infty$ and $|x|_1$ the respective sup-norm and $\ell_1$-norm of $x \in \IR^d$. For $r \ge 0$, we let $B(x,r)$ stand for the closed ball in sup-norm with center $x$ and radius $r$. When $A$ is a subset of $\IR^d$, we write $\ov{A}$ for its closure, $\mathring{A}$ for the interior, and $\partial A = \ov{A} \backslash \mathring{A}$ for its boundary. We let $d(x,A)= \inf \{|x-y|_\infty$; $y \in A\}$ denote the sup-norm distance of $x$ to $A$. When $A$ is a Borel subset of $\IR^d$, we sometimes write $|A|$ for its Lebesgue measure.

\medskip
We denote by $(X_t)_{t \ge 0}$ the canonical $\IR^d$-valued process on $C(\IR_+,\IR^d)$, the space of continuous $\IR^d$-valued functions on $\IR_+$, which we endow with the canonical $\sigma$-algebra $\cF$ and the canonical right-continuous filtration denoted by $(\cF_t)_{t \ge 0}$. We let $(\theta_t)_{t \ge 0}$ stand for the canonical shift (so that $\theta_t(w)(\cdot) = w(t + \cdot)$, for $w \in C(\IR_+, \IR^d)$  and $t \ge 0$). We denote by $P_x$ the Wiener measure starting from $x \in \IR^d$, and by $E_x$ the corresponding expectation. So, under $P_x$ the process $(X_t)_{t \ge 0}$ is the canonical Brownian motion on $\IR^d$ starting from $x$. When $A$ is a closed subset of $\IR^d$, we write $H_A = \inf\{s \ge 0, X_s \in A\}$ for the entrance time of $X$ in $A$, $\wt{H}_A = \inf\{s > 0$; $X_s \in A\}$ for the hitting time of $A$, and when $U$ is an open subset of $\IR^d$, we write $T_U = \inf\{s \ge 0$; $X_s \notin U\}$ ($= H_{U^c})$ for the exit time of $U$. They all are $(\cF_t)$-stopping times.

\medskip
We consider
\begin{equation}\label{1.1}
\mbox{$U_0 \not= \emptyset$, a bounded Borel subset of $\IR^d$ and $U_1 = \IR^d \backslash U_0$, its complement}.
\end{equation}
The boundary
\begin{equation}\label{1.2}
S = \partial U_0 = \partial U_1
\end{equation}

\n
is a compact subset of $\IR^d$. In Section 3 the porous interface corresponding to $\Sigma$ in the hard obstacle case, or to $V$ in the soft potential case, will be ``palpably'' present in the vicinity of every point of $S$. At a heuristic level, $S$ will be some kind of ``segmentation'' for the porous interface.

\medskip
In this section we are mainly concerned with the development of various controls on certain local density functions (in dyadic scales), which we now introduce. For $x \in \IR^d$ and $\ell \in \IZ$, we set 
\begin{equation}\label{1.3}
\left\{ \begin{array}{l}
\wh{\sigma}_\ell(x) = |B(x,2^{-\ell}) \cap U_1| \, / \, |B(x,2^{-\ell})|,
\\[1ex]
\wt{\sigma}_\ell(x) = |B(x,4 \cdot 2^{-\ell}) \cap U_1| \, / \, |B(x,4 \cdot 2^{-\ell})| = \wh{\sigma}_{\ell - 2}(x)\,.
\end{array}\right.
\end{equation}

\medskip\n
Eventually, our main interest will lie in the case $\ell \ge 0$, i.e.~in the local scale picture, but for the time being in this section, we do not impose any restriction on the sign of $\ell$. As a shorthand for the average of a locally integrable function $f$ on a sup-norm ball $B(x,2^{-\ell})$, with $x \in \IR^d$, $\ell \in \IZ$, we write
\begin{equation}\label{1.4}
\strokedint_{B(x,2^{-\ell})} f(y) \, dy = \dis\frac{1}{|B(x,2^{-\ell})|} \; \dis\int_{B(x,2^{-\ell})} f(y)\,dy
\end{equation}

\medskip\n
and we introduce the normalized Lebesgue measure restricted to $B(x,2^{-\ell})$
\begin{equation}\label{1.5}
\mu_{x,\ell}(dy) = \dis\frac{1}{|B(x,2^{-\ell})|} \; 1_{B(x,2^{-\ell})}(y) \, dy\,.
\end{equation}

\medskip\n
We first collect some facts concerning the Lipschitz character of the local densities $\wh{\sigma}_\ell(\cdot)$ and $\wt{\sigma}_\ell(\cdot)$, and we relate $\wh{\sigma}_{\ell}(\cdot)$ to the average of $\wh{\sigma}_{\ell'}$ on $B(x,2^{-\ell})$ when $\ell ' > \ell$.

\begin{lem}\label{lem1.1}
For $x,y \in \IR^d$, $\ell \in \IZ$, one has
\begin{align}
| \wh{\sigma}_\ell (x + y) - \wh{\sigma}_\ell(x)| & \le 2^{\ell} |y|_1, \label{1.6}
\\[1ex]
| \wt{\sigma}_\ell (x + y) - \wt{\sigma}_\ell(x)| & \le 2^{\ell -2 } |y|_1. \label{1.7}
\end{align}

\medskip\n
Moreover, for $\ell ' > \ell$ in $\IZ$ and $x \in \IR^d$ one has
\begin{equation}\label{1.8}
|\wh{\sigma}_\ell(x) - \strokedint_{B(x,2^{-\ell})} \wh{\sigma}_{\ell '}(y) \, dy| \le c_0 \,2^{\ell - \ell'}, \;\mbox{where} \;\, c_0 = d \, 2^{d-1}.
\end{equation}
\end{lem}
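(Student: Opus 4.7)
My plan is to treat (\ref{1.6}) and (\ref{1.7}) by a direct volume comparison of two translated sup-norm balls, and then to reduce (\ref{1.8}) to (\ref{1.6}) via a Fubini-type identity that rewrites the average of $\wh{\sigma}_{\ell'}$ over $B(x, 2^{-\ell})$ as an average of shifts of $\wh{\sigma}_\ell$ over the smaller ball $B(0, 2^{-\ell'})$.

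For (\ref{1.6}), I would exploit the fact that the denominator $|B(x, 2^{-\ell})| = (2 \cdot 2^{-\ell})^d$ does not depend on $x$, so
\begin{equation*}
|\wh{\sigma}_\ell(x+y) - \wh{\sigma}_\ell(x)| \le \frac{|B(x+y, 2^{-\ell}) \,\triangle\, B(x, 2^{-\ell})|}{(2 \cdot 2^{-\ell})^d}\,.
\end{equation*}
Since $B(x, r) = x + [-r, r]^d$, the intersection $B(x, r) \cap B(x+y, r)$ has volume $\prod_i \max(2r - |y_i|, 0)$. Assuming $|y|_\infty \le 2r$ (otherwise (\ref{1.6}) is trivial, as $2^\ell |y|_1 \ge 2$ while $\wh{\sigma}_\ell \in [0,1]$), I would invoke the elementary inequality $\prod_i (1 - a_i) \ge 1 - \sum_i a_i$ with $a_i = |y_i|/(2r) \in [0,1]$ to conclude $|B(x+y, r) \,\triangle\, B(x, r)| \le 2(2r)^{d-1} |y|_1$. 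Division by $(2r)^d$ with $r = 2^{-\ell}$ yields (\ref{1.6}), and repeating with $r = 4 \cdot 2^{-\ell}$ yields (\ref{1.7}).

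For (\ref{1.8}), I write $a = 2^{-\ell}$, $b = 2^{-\ell'}$, unfold the double integral defining $\strokedint_{B(x, a)} \wh{\sigma}_{\ell'}(y)\,dy$, perform the change of variables $w = z - y$ in the inner integral, swap the order of integration and finally translate $y \mapsto y - w$ in the outer variable. This produces the key identity
\begin{equation*}
\strokedint_{B(x, a)} \wh{\sigma}_{\ell'}(y)\,dy \;=\; \strokedint_{B(0, b)} \wh{\sigma}_\ell(x + w)\,dw\,.
\end{equation*}
Combining the triangle inequality with (\ref{1.6}) then gives
\begin{equation*}
\Big|\wh{\sigma}_\ell(x) - \strokedint_{B(x, a)} \wh{\sigma}_{\ell'}(y)\,dy\Big| \le \strokedint_{B(0, b)} |\wh{\sigma}_\ell(x+w) - \wh{\sigma}_\ell(x)|\,dw \le 2^\ell \strokedint_{B(0, b)} |w|_1\,dw\,,
\end{equation*}
and an explicit calculation on the cube $B(0, b)$ gives $\strokedint_{B(0, b)} |w|_1\,dw = d\,b/2$, comfortably within the announced constant $c_0 = d\,2^{d-1}$.

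None of the individual steps is delicate; the only observation that goes beyond direct manipulation is the Fubini/translation identity, which reduces the averaging statement (\ref{1.8}) to the already-established coordinate Lipschitz bound (\ref{1.6}) and therefore constitutes the only ``idea'' in the proof.
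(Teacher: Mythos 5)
Your proof is correct, and for (\ref{1.8}) it takes a genuinely different route from the paper's. The paper introduces the smoothed indicator $\psi^x_{\ell,\ell'}$ (see (\ref{1.11})--(\ref{1.12})), observes that it is identically $1$ on $B(x,2^{-\ell}-2^{-\ell'})$ and vanishes outside $B(x,2^{-\ell}+2^{-\ell'})$, and then bounds the average of $\wh{\sigma}_{\ell'}$ above and below by the $U_1$-densities of the slightly fattened/thinned boxes, paying for the error with the volume of the annular shells; this gives the constant $c_0 = d\,2^{d-1}$. You instead exploit the semigroup structure of the local-density maps: after a Fubini and two translations, the average of $\wh{\sigma}_{\ell'}$ over $B(x,2^{-\ell})$ becomes the average of the coarser density $\wh{\sigma}_\ell$ over the small ball $B(0,2^{-\ell'})$, at which point the Lipschitz estimate (\ref{1.6}) that you have already proved finishes the job and even yields the sharper constant $d/2$ in place of $d\,2^{d-1}$. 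This is cleaner, reuses earlier work rather than introducing a new auxiliary kernel, and the gain in the constant is irrelevant for what the lemma is used for (only the qualitative $2^{\ell-\ell'}$ decay matters downstream, see (\ref{1.27}), (\ref{1.38})), so the simplification is harmless. Your treatment of (\ref{1.6}) is also slightly more economical than the paper's: the paper first handles $y$ collinear to a basis vector and implicitly sums up over coordinates, whereas you bound the symmetric difference in one shot via the product inequality $\prod_i(1-a_i)\ge 1-\sum_i a_i$. Both arguments for (\ref{1.6}), and the reduction of (\ref{1.7}) to (\ref{1.6}) via $\wt{\sigma}_\ell = \wh{\sigma}_{\ell-2}$, agree in substance with the paper.
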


\begin{proof}
We begin with the proof of (\ref{1.6}), and note that the claim follows from the fact that for $y$ collinear to a vector of the canonical basis of $\IR^d$,
\begin{equation}\label{1.9}
\begin{split}
|\wh{\sigma}_\ell (x + y) - \wh{\sigma}_\ell(x)| & \le \dis\frac{1}{|B(x,2^{-\ell})|}\; |B(x,2^{-\ell}) \,\Delta B(x + y, 2^{-\ell})|
\\[1ex]
&\quad \;\; \mbox{(with $\Delta$ the symmetric difference)}
\\[2ex]
& \le \dis\frac{2 |y|_1 \,(2\cdot 2^{-\ell})^{d-1}}{(2 \cdot 2^{-\ell})^d} = 2^\ell |y|_1\,.
\end{split}
\end{equation}

\medskip\n
The claim (\ref{1.6}) readily follows. The claim (\ref{1.7}) follows as well since $\wt{\sigma}_\ell = \wh{\sigma}_{\ell - 2}$, cf.~(\ref{1.3}). We now turn to the proof of (\ref{1.8}). We consider $\ell ' > \ell$ in $\IZ$ and note that for $x \in \IZ^d$,
\begin{equation}\label{1.10}
\begin{split}
\strokedint_{B(x,2^{-\ell})} \; \wh{\sigma}_{\ell '}(y) \, dy  = &\; |B(0,2^{-\ell})|^{-1} |B(0,2^{-\ell '})|^{-1}
\\[-0.5ex]
&\; \dis\int 1\{|y -x|_\infty \le 2^{-\ell}\} \,1 \{|z - y|_\infty \le 2^{-\ell '}\} \, 1_{U_1}(z)\,dy \,dz
\\ 
=&\; | B(0,2^{-\ell})|^{-1} \dis\int 1_{U_1}(z) \,\psi^x_{\ell,\ell '}(z) \, dz,
\end{split}
\end{equation}

\medskip\n
where $\psi^x_{\ell, \ell '}$ stands for the continuous compactly supported function
\begin{equation}\label{1.11}
\psi^x_{\ell, \ell '}(z)  = |B(0,2^{-\ell '})|^{-1} \dis\int 1\{|y - x|_\infty \le 2^{-\ell} \}\, 1\{|z - y|_\infty \le 2^{-\ell '}\} \, dy,
\end{equation}
so that
\begin{equation}\label{1.12}
\begin{split}
0 \le \psi^x_{\ell, \ell '}  \le 1, \; \mbox{and $\psi^x_{\ell, \ell '}$}  = &\;1\; \mbox{on $B(x,2^{-\ell} - 2^{-\ell '}),$}
\\
 \psi^x_{\ell, \ell '}  =&\; 0 \;\mbox{on} \; \IR^d \backslash B(x,2^{-\ell} + 2^{-\ell '}). 
\end{split}
\end{equation}
Coming back to (\ref{1.10}), we find that
\begin{equation}\label{1.13}
\begin{split}
\strokedint_{B(x,2^{-\ell})} \wh{\sigma}_{\ell '}(y) \, dy & \ge |B(x,2^{-\ell})|^{-1} \dis\int_{B(x,2^{-\ell} - 2^{- \ell '})} 1_{U_1}(z) \, dz
\\
& \ge \wh{\sigma}_\ell(x) - |B(x,2^{-\ell}) \backslash B(x,2^{-\ell} - 2^{-\ell '})| / |B(x,2^{-\ell})|
\\[1ex]
& = \wh{\sigma}_\ell(x) - \big(1 - (1-2^{\ell - \ell'})^d\big) = \wh{\sigma}_\ell(x) - \dis\int^1_{1 - 2^{\ell - \ell '}} du^{d-1} du
\\ 
& \ge \wh{\sigma}_\ell (x) - d \, 2^{\ell - \ell '}.
\end{split}
\end{equation}
In a similar fashion, we have
\begin{equation}\label{1.14}
\begin{split}
\strokedint_{B(x,2^{-\ell})} \wh{\sigma}_{\ell '}(y) \, dy & \le  |B(x,2^{-\ell})|^{-1} \dis\int_{B(x,2^{-\ell} + 2^{- \ell '})} 1_{U_1}(z) \, dz
\\
& \le \wh{\sigma}_\ell(x) + |B(x,2^{-\ell} + 2^{-\ell '}) \backslash B(x,2^{-\ell})| / |B(x,2^{-\ell})|
\\ 
& = \wh{\sigma}_\ell(x) + (1 + 2^{\ell - \ell'})^d - 1 = \wh{\sigma}_\ell(x) + \dis\int_1^{1 + 2^{\ell - \ell '}} du^{d-1} du 
\\ 
& \le \wh{\sigma}_\ell (x)+ d \, 2^{d- 1} 2^{\ell - \ell '}.
\end{split}
\end{equation}

\medskip\n
Collecting (\ref{1.13}) and (\ref{1.14}), we find (\ref{1.8}). This concludes the proof of \hbox{Lemma \ref{lem1.1}.}
\end{proof}

The next lemma will show that when $\ell ' > \ell$ and $\wh{\sigma}_{\ell '}$ has an average $\beta '$ in a box $B(x,2^{-\ell})$, then, either $\wh{\sigma}_{\ell '}$ takes with sizeable measure in the box $B(x,2^{-\ell})$ values bigger and values smaller than $\beta '$ by a certain amount, or that $\wh{\sigma}_{\ell '}$ takes with sizeable measure in the box $B(x, 2^{-\ell})$ values close to $\beta'$. This lemma will later be useful when showing in Proposition \ref{prop1.3} that Brownian motion starting at $x$ has a sizeable probability to reach points where $\wh{\sigma}_{\ell '}$ is close to $\beta '$ before exiting $B(x,2^{-\ell})$.

\begin{lem}\label{lem1.2}
For $x \in \IR^d$ and $\ell ' > \ell$ in $\IZ$, set
\begin{equation}\label{1.15}
\beta' =  \strokedint_{B(x, 2^{-\ell})} \wh{\sigma}_{\ell '} (y) \,dy.
\end{equation}

\medskip\n
Then, for $0 \le \delta \le \beta' \wedge (1-\beta')$ at least one of i) or ii) below holds:
\begin{equation}\label{1.16}
\left\{ \begin{array}{rl}
{\rm i)} & \mu_{x,\ell} (\{ \wh{\sigma}_{\ell '} > \beta' + \delta \}) \ge \mbox{\f $\dis\frac{\delta}{2}$}  \;\;\mbox{and} \;\;  \mu_{x,\ell} (\{\wh{\sigma}_{\ell'} < \beta' - \delta\}) \ge \mbox{\f $\dis\frac{\delta}{2}$} 
\\[1ex]
{\rm ii)} &  \mu_{x,\ell}  (\{ \beta' - \delta \le \wh{\sigma}_{\ell'} \le \beta' + \delta\}) \ge \mbox{\f $\dis\frac{1}{4}$}  - \mbox{\f $\dis\frac{\delta}{2}$} 
\end{array}\right.
\end{equation}
(we refer to (\ref{1.5}) for the definition of $\mu_{x,\ell})$.
\end{lem}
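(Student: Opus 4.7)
The argument is a short contrapositive, driven entirely by the constraint that the average of $\wh{\sigma}_{\ell'}$ on $B(x,2^{-\ell})$ equals $\beta'$. Introduce the three sets
\[
A_+ = \{\wh{\sigma}_{\ell'} > \beta'+\delta\}, \quad A_- = \{\wh{\sigma}_{\ell'} < \beta'-\delta\}, \quad A_0 = \{\beta'-\delta \le \wh{\sigma}_{\ell'} \le \beta'+\delta\},
\]
which by the hypothesis $0 \le \delta \le \beta' \wedge (1-\beta')$ form a partition of $\IR^d$ (so their $\mu_{x,\ell}$-masses sum to $1$). The plan is to show that if (i) fails, i.e.\ either $\mu_{x,\ell}(A_+) < \delta/2$ or $\mu_{x,\ell}(A_-) < \delta/2$, then (ii) must hold.

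The first step is to extract two elementary inequalities from the identity $\beta' = \int \wh{\sigma}_{\ell'} \, d\mu_{x,\ell}$. Bounding $\wh{\sigma}_{\ell'}$ from above by $1$ on $A_+$, by $\beta'-\delta$ on $A_-$, and by $\beta'+\delta$ on $A_0$, and using $\mu_{x,\ell}(A_+) = 1 - \mu_{x,\ell}(A_-) - \mu_{x,\ell}(A_0)$, one obtains after a short rearrangement
\[
\delta\,\mu_{x,\ell}(A_-) \le (1-\beta')\,\mu_{x,\ell}(A_+) + \delta\,\mu_{x,\ell}(A_0) \le \mu_{x,\ell}(A_+) + \delta\,\mu_{x,\ell}(A_0). \tag{I}
\]
Symmetrically, bounding $\wh{\sigma}_{\ell'}$ from below by $\beta'+\delta$ on $A_+$, by $0$ on $A_-$, and by $\beta'-\delta$ on $A_0$ yields
\[
\delta\,\mu_{x,\ell}(A_+) \le \beta'\,\mu_{x,\ell}(A_-) + \delta\,\mu_{x,\ell}(A_0) \le \mu_{x,\ell}(A_-) + \delta\,\mu_{x,\ell}(A_0). \tag{II}
\]

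The second step is a direct case analysis. Suppose first that $\mu_{x,\ell}(A_+) < \delta/2$. Then (I) gives $\mu_{x,\ell}(A_-) < 1/2 + \mu_{x,\ell}(A_0)$, so
\[
\mu_{x,\ell}(A_0) = 1 - \mu_{x,\ell}(A_+) - \mu_{x,\ell}(A_-) > 1 - \tfrac{\delta}{2} - \tfrac{1}{2} - \mu_{x,\ell}(A_0),
\]
whence $\mu_{x,\ell}(A_0) > \tfrac{1}{4} - \tfrac{\delta}{4} \ge \tfrac{1}{4} - \tfrac{\delta}{2}$, and (ii) holds. The case $\mu_{x,\ell}(A_-) < \delta/2$ is identical, using (II) in place of (I).

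I expect no real obstacle: the argument is purely a bookkeeping exercise with the first moment of $\wh{\sigma}_{\ell'}$ and the partition into $A_+$, $A_-$, $A_0$. The only point requiring care is that the hypothesis $0 \le \delta \le \beta' \wedge (1-\beta')$ is what makes the estimates $\beta'-\delta \ge 0$ and $\beta'+\delta \le 1$ available, so that the crude bounds $\wh{\sigma}_{\ell'} \in [0,1]$ used on $A_\pm$ line up correctly with the bounds $(\beta'\mp\delta)$ used on $A_0$.
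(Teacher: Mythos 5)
Your proof is correct. Let me verify the key inequalities. Writing $p_\pm = \mu_{x,\ell}(A_\pm)$, $p_0 = \mu_{x,\ell}(A_0)$, so $p_+ + p_- + p_0 = 1$. From $\beta' = \int \wh{\sigma}_{\ell'}\,d\mu_{x,\ell}$ and the upper bounds $1, \beta'-\delta, \beta'+\delta$ on $A_+, A_-, A_0$ one gets $\beta'(p_++p_-+p_0) \le p_+ + (\beta'-\delta)p_- + (\beta'+\delta)p_0$, which after cancellation is $\delta p_- \le (1-\beta')p_+ + \delta p_0$; this is your (I). The lower-bound version gives $\delta p_+ \le \beta' p_- + \delta p_0$, your (II). If (i) fails (so in particular $\delta>0$, since at $\delta=0$ (i) is trivial) and $p_+ < \delta/2$, then (I) yields $p_- < \tfrac12 + p_0$, hence $p_0 = 1 - p_+ - p_- > \tfrac12 - \tfrac{\delta}{2} - p_0$, so $p_0 > \tfrac14 - \tfrac{\delta}{4} \ge \tfrac14 - \tfrac{\delta}{2}$; the case $p_- < \delta/2$ is symmetric via (II). All correct.

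The route is genuinely different from the paper's. The paper introduces an auxiliary random variable $Y$ with the law of $\wh{\sigma}_{\ell'}$ under $\mu_{x,\ell}$ and works with the tail distribution function $\ov{F}(u)=P[Y>u]$: it uses $\beta' = \int_0^1 \ov{F}$, deduces from the failure of $\ov{F}(\beta'+\delta)\ge\delta/2$ that $\int_{\beta'-\delta}^{\beta'+\delta}\ov{F}\ge\delta/2$, and then invokes monotonicity of $\ov{F}$ to get $\ov{F}(\beta'-\delta)\ge\tfrac14$; the symmetric alternative comes from passing to $Z=1-Y$. Your proof bypasses the distribution function entirely and works directly with the partition $\{A_+,A_-,A_0\}$ and the first-moment identity, which is arguably a bit more transparent and avoids the monotonicity argument. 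A small remark on your closing caveat: the hypothesis $\delta\le\beta'\wedge(1-\beta')$ is not actually load-bearing in your version (the bounds $\wh{\sigma}_{\ell'}\le 1$ on $A_+$ and $\ge 0$ on $A_-$ are unconditional, and (I), (II) hold as stated regardless), whereas the paper's proof does use it, e.g.\ in the step $\int_0^{\beta'-\delta}\ov{F}\le\beta'-\delta$ and in applying the same alternative to $Z$.
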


\begin{proof}
We introduce on some auxiliary probability space governed by the probability $P$, with corresponding expectation denoted by $E[\cdot ]$, a $[0,1]$-valued random variable $Y$ with same law as $\wh{\sigma}_{\ell'}$ under $\mu_{x,\ell}$, so that $\beta' = E[Y]$. We let $\ov{F}(u) = P[Y > u]$ for $u \in \IR$, and note that $\ov{F}(u) = 0$ for $u \ge 1$, and $\beta' = E[Y] = \int^{1}_0 \ov{F}(u) du$. Given $0 \le \delta \le \beta' \wedge (1- \beta')$, either
\begin{equation}\label{1.17}
\left\{ \begin{array}{ll}
{\rm a)}  \quad & \ov{F}(\beta' + \delta) \ge \mbox{\f $\dis\frac{\delta}{2}$}   
\\
\mbox{or}
\\
{\rm b)}  \quad &  \ov{F}  (\beta' + \delta) < \mbox{\f $\dis\frac{\delta}{2}$}.
\end{array}\right.
\end{equation}
In case b), we also find that
\begin{equation*}
\dis\int^{\beta' + \delta}_0 \ov{F}(u)\,du + \mbox{\f $\dis\frac{\delta}{2}$}  \ge \dis\int^{\beta' + \delta}_0 \ov{F}(u) \, du + \ov{F}(\beta' + \delta) \ge \dis\int^{\beta' + \delta}_0 \ov{F}(u)\,du + \dis\int^1_{\beta' + \delta} \ov{F}(u) \, du = \beta', 
\end{equation*}
so that
\begin{equation*}
\dis\int^{\beta' + \delta}_0 \ov{F}(u)\,du \ge \beta' -\mbox{\f $\dis\frac{\delta}{2}$} \,.
\end{equation*}

\medskip\n
At the same time, since $\ov{F} \le 1$, $\int_0^{\beta' - \delta} \ov{F}(u) \, du \le \beta' - \delta$, so that
\begin{equation*}
\dis\int^{\beta' + \delta}_{\beta' - \delta} \ov{F}(u)\,du \ge \beta' - \mbox{\f $\dis\frac{\delta}{2}$}  - (\beta' - \delta) = \mbox{\f $\dis\frac{\delta}{2}$} \,.
\end{equation*}

\medskip\n
Dividing both members by $2 \delta$, we find, since $\ov{F}$ is non-increasing, that $\ov{F}(\beta' - \delta) \ge \frac{1}{4}$. Since we are in case b), we also have $\ov{F} (\beta' + \delta) < \frac{\delta}{2}$. It now follows that in case (\ref{1.17}) b) we additionally have
\begin{equation}\label{1.18}
P[\beta' - \delta \le Y \le \beta' + \delta] \ge\mbox{\f $\dis\frac{1}{4}$} -\mbox{\f $\dis\frac{\delta}{2}$} \;.
\end{equation}

\medskip\n
We now consider $Z = 1 - Y$, so that $0 \le Z \le 1$, $E[Z] = 1 - \beta'$. We further note that $0 \le \delta \le \beta' \wedge (1 - \beta') = E[Z] \wedge (1 - E[Z])$. We can then introduce $\ov{F}_Z(u) = P[Z > u] = P[Y < 1 - u]$ for $u \in \IR$. If we now let $Z$ take the role of $Y$ in the alternative (\ref{1.17}), we see that either 
\begin{equation}\label{1.19}
\left\{ \begin{array}{rl}
{\rm c)} & \quad \ov{F}_Z (1- \beta' + \delta)\;(= P[Y < \beta' - \delta]) \ge  \mbox{\f $\dis\frac{\delta}{2}$}  \;\;\mbox{or}
\\[3ex]
{\rm d)} & \quad  \ov{F}_Z (1- \beta' + \delta)\;(= P[Y < \beta' - \delta])< \mbox{\f $\dis\frac{\delta}{2}$} \,.
\end{array}\right.
\end{equation}

\medskip\n
Additionally, we know that in case d)
\begin{equation*}
P[1 - \beta' - \delta \le Z \le 1 - \beta' + \delta] \;(= P[\beta' - \delta \le Y \le \beta' + \delta]) \ge \mbox{\f $\dis\frac{1}{4}$}  - \mbox{\f $\dis\frac{\delta}{2}$}  \;.
\end{equation*}

\medskip\n
Thus, when either b) in (\ref{1.17}) or d) in (\ref{1.19}) holds we obtain (\ref{1.16}) ii). So, when (\ref{1.16}) ii) does not hold, necessarily both a) in (\ref{1.17}) and c) in (\ref{1.19}) hold. This yields (\ref{1.16}) i) and concludes the proof of Lemma \ref{lem1.2}.
\end{proof}

\medskip
We will now see that when $\ell' > \ell$ and $\wh{\sigma}_{\ell'}$ has an average $\beta'$ over $B(x,2^{-\ell})$, which is not too close to $0$ or $1$, Brownian motion starting at $x$ has a non-degenerate probability of entering a region where $\wh{\sigma}_{\ell'}$ takes values close to $\beta'$ before exiting $B(x,2^{-\ell})$. For $r \ge 0$, we introduce the first time when $X$ moves at $|\cdot |_\infty$-distance $r$ from its starting point (an $(\cF_t)$-stopping time):
\begin{equation}\label{1.20}
\tau_r = \inf \{s \ge 0; \,|X_s - X_0|_\infty \ge r\}.
\end{equation}
We can now state

\begin{prop}\label{prop1.3}
For $x \in \IR^d$, $\ell' > \ell$ in $\IZ$, we set $\beta' = \strokedint_{B(x,2^{-\ell})} 
\, \wh{\sigma}_{\ell'}(y) \, dy$. Then, we have
\begin{equation}\label{1.21}
|\wh{\sigma}_\ell(x) - \beta' | \le c_0 \,2^{\ell - \ell'} \;\mbox{(with $c_0$ as in (\ref{1.8}))},
\end{equation}
and for $0 \le \delta \le \beta' \wedge (1 - \beta') \wedge \frac{1}{4}$,
\begin{equation}\label{1.22}
P_x[H_{\{\wh{\sigma}_{\ell'} \in [\beta' - \delta, \beta' + \delta]\}} <  \tau_{2^{- \ell}}] \ge c_1(\delta)
\end{equation}

\medskip\n
(where $H_{\{\wh{\sigma}_{\ell'} \in [\beta' - \delta, \beta' + \delta]\}}$ denotes the entrance time of $X$ in the closed set $\{\wh{\sigma}_{\ell'} \in [\beta' - \delta, \beta' + \delta]\}$).
\end{prop}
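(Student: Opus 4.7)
The bound (\ref{1.21}) is immediate from (\ref{1.8}) of Lemma \ref{lem1.1} applied to the pair $\ell' > \ell$, and requires no further work. For (\ref{1.22}), the plan is to combine the dichotomy of Lemma \ref{lem1.2} with a quantitative hitting estimate for Brownian motion inside $B(x, 2^{-\ell})$.

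Concretely, Lemma \ref{lem1.2} furnishes two alternatives. In case ii), the target set $\{\wh{\sigma}_{\ell'} \in [\beta' - \delta, \beta' + \delta]\}$ carries $\mu_{x, \ell}$-mass at least $\frac{1}{4} - \frac{\delta}{2} \ge \frac{1}{8}$ (using $\delta \le \frac{1}{4}$). I would then invoke a standard Brownian hitting estimate: for any Borel $F \subseteq B(x, 2^{-\ell})$ with $|F|/|B(x, 2^{-\ell})| \ge p > 0$, one has $P_x[H_F < \tau_{2^{-\ell}}] \ge \varphi(p)$ for some positive dimensional function $\varphi$. This immediately yields the desired bound with constant $\varphi(\frac{1}{8})$.

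In case i), both $A_\pm = \{\pm(\wh{\sigma}_{\ell'} - \beta') > \delta\}$ carry $\mu_{x,\ell}$-mass at least $\frac{\delta}{2}$. Here the Lipschitz regularity of $\wh{\sigma}_{\ell'}$ from (\ref{1.6}) becomes essential, since it ensures the map $s \mapsto \wh{\sigma}_{\ell'}(X_s)$ is continuous along Brownian trajectories. If $\wh{\sigma}_{\ell'}(x) \in [\beta' - \delta, \beta' + \delta]$ already, then the entrance time vanishes and the bound is trivial; otherwise, say $\wh{\sigma}_{\ell'}(x) > \beta' + \delta$, the reverse case being symmetric. Applying the same hitting estimate to $A_-$ gives $P_x[H_{A_-} < \tau_{2^{-\ell}}] \ge \varphi(\frac{\delta}{2})$, and the intermediate value theorem then forces the continuous map $\wh{\sigma}_{\ell'}(X_\cdot)$ to cross the interval $[\beta' - \delta, \beta' + \delta]$ before reaching a value below $\beta' - \delta$. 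Hence the target set is hit strictly before $A_-$, and one may take $c_1(\delta) = \varphi(\frac{\delta}{2}) \wedge \varphi(\frac{1}{8})$.

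The main technical obstacle is the quantitative hitting estimate $P_x[H_F < \tau_{2^{-\ell}}] \ge \varphi(|F|/|B(x, 2^{-\ell})|)$ underlying both cases. This is classical and can be obtained, for instance, via Green function bounds for Brownian motion killed on $\partial B(x, 2^{-\ell})$ together with a Chebyshev-type argument on the occupation time $\int_0^{\tau_{2^{-\ell}}} 1_F(X_s)\, ds$, after applying the strong Markov property at the entrance time $H_F$; I would record it as a short preliminary lemma before the main argument, noting that only a qualitative positive lower bound $\varphi(p) > 0$ for each fixed $p > 0$ is needed here.
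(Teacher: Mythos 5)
Your proof is correct, and the overall architecture (reduce to the two alternatives of Lemma \ref{lem1.2}, hit a set of sizeable $\mu_{x,\ell}$-mass, use continuity of $\wh{\sigma}_{\ell'}$ and the intermediate value theorem) matches the paper's. The one place you genuinely diverge is in case i). The paper's argument starts from $x$, first hits $\{\wh{\sigma}_{\ell'} \ge \beta'+\delta\}$, then applies the strong Markov property to hit $\{\wh{\sigma}_{\ell'} \le \beta'-\delta\}$, and infers a crossing of $\{\wh{\sigma}_{\ell'}=\beta'\}$ by continuity; this two-stage scheme is why the paper first shrinks to the sub-ball $B(x,2^{-\ell}(1-\delta'))$ and needs the hitting bound to be uniform over all starting points $y$ in that sub-ball. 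You instead branch on the value $\wh{\sigma}_{\ell'}(x)$: either $x$ is already in the target set (trivial), or $x$ lies on one side, in which case hitting the opposite side alone already forces a crossing of the interval $[\beta'-\delta,\beta'+\delta]$. This is a genuine small simplification: it needs the hitting estimate only from the center $x$ (so no $\delta'$-shrinking and no uniformity over a sub-ball), and it costs only one hitting-probability factor instead of two. The auxiliary hitting estimate $P_x[H_F<\tau_{2^{-\ell}}]\ge\varphi(p)$ for Borel $F\subseteq B(x,2^{-\ell})$ of relative volume $\ge p$ is indeed the right lemma to isolate, and the occupation-time/Green-function proof you sketch (with the strong Markov property at $H_F$ and the rearrangement bound $\sup_y\int_F G_B(y,z)\,dz\le c\,2^{-2\ell}$) is standard and yields something like $\varphi(p)=cp^2$. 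So the proposal is sound; the only thing worth flagging is that when you write up the key lemma you should make sure the starting point in the estimate is the center of the ball, since for points near $\partial B(x,2^{-\ell})$ a uniform lower bound fails without the paper's sub-ball restriction.
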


\begin{proof}
We first observe that (\ref{1.21}) is a restatement of (\ref{1.8}) and we only need to prove (\ref{1.22}). We use the alternative (\ref{1.16}) from Lemma \ref{lem1.2}. If (\ref{1.16}) i) holds, we use translation invariance and scaling to choose $\delta' \in (0,1)$ solely depending on $\delta$, such that $\mu_{x,\ell}(\{y: |y - x|_\infty > 2^{-\ell} (1 - \delta')\}) \le \frac{\delta}{4}$, so that
\begin{equation}\label{1.23}
\left\{ \begin{array}{l}
 \mu_{x,\ell}\big(\{\wh{\sigma}_{\ell'} >\beta' + \delta\} \cap B(x,2^{-\ell} (1 - \delta')\big)\big) \ge \mbox{\f $\dis\frac{\delta}{4}$} \;\;\mbox{and}
\\[3ex]
 \mu_{x,\ell}\big(\{\wh{\sigma}_{\ell'} < \beta' - \delta\} \cap B(x,2^{-\ell} (1 - \delta')\big)\big) \ge \mbox{\f $\dis\frac{\delta}{4}$}\,.
\end{array}\right.
\end{equation}

\medskip\n
Letting $U$ stand for the interior of $B(x,2^{-\ell})$ (i.e.~$U = \{y: |y - x|_\infty < 2^{-\ell}\}$), we find by translation invariance, scaling, and standard estimates for Brownian motion killed outside $\mathring{B}(0,1)$ that
\begin{equation}\label{1.24}
\left\{ \begin{array}{l}
P_x\big[H_{\{\wh{\sigma}_{\ell'} \ge \beta' + \delta\} \cap B(x,2^{-\ell} (1 - \delta'))} < T_U\big] \ge c(\delta) \;\;\mbox{and}
\\[3ex]
P_y\big[H_{\{\wh{\sigma}_{\ell'} \le \beta' - \delta\} \cap B(x,2^{-\ell} (1 - \delta'))} < T_U\big] \ge c(\delta) \;\mbox{for all}
\\[1ex]
\mbox{$y \in B(x,2^{-\ell} (1 - \delta'))$} \,.
\end{array}\right.
\end{equation}

\medskip\n
It then follows from the strong Markov property that Brownian motion starting at $x$ enters 
$\{\wh{\sigma}_{\ell'} \ge \beta' + \delta\}$ and then $\{\wh{\sigma}_{\ell'} \le \beta' - \delta\}$ before exiting $U$ with probability at least $c'(\delta) (= c(\delta)^2$). By the continuity of $\wh{\sigma}_{\ell'}$, any such trajectory of Brownian motion enters $\{\wh{\sigma}_{\ell'} = \beta'\}$ before exiting $U$.  So, when (\ref{1.16}) i) holds, we find that
\begin{equation}\label{1.25}
P_x[H_{\{\wh{\sigma}_{\ell'} = \beta'\}} < \tau_{2^{- \ell}}] \ge c'(\delta).
\end{equation}
On the other hand, when (\ref{1.16}) ii) holds, then
\begin{equation*}
\mu_{x,\ell} (\{\wh{\sigma}_{\ell'} \in [\beta' - \delta, \beta' + \delta]\}) \ge \mbox{\f $\dis\frac{1}{4}$} -  \mbox{\f $\dis\frac{\delta}{2}$} \ge \mbox{\f $\dis\frac{1}{8}$}\;,
\end{equation*}

\medskip\n
and it follows from standard estimates on killed Brownian motion (as above) that
\begin{equation}\label{1.26}
P_x[H_{\{\wh{\sigma}_{\ell'} \in [\beta' + \delta, \beta' + \delta]\}}  <  \tau_{2^{- \ell}}] \ge c.
\end{equation}

\medskip\n
Collecting (\ref{1.25}) and (\ref{1.26}) we find (\ref{1.22}). This proves Proposition \ref{prop1.3}.\end{proof}

\medskip
We will now consider decreasing scales $2^{-\ell_0} > 2^{- \ell_1} > \dots > 2^{-\ell_J}$, which are well-separated, and see that when starting at $x$ in $\IR^d$ such that $\wh{\sigma}_{\ell_0}(x) = \frac{1}{2}$, there is a non-degenerate probability for Brownian motion to reach before moving at sup-distance $\frac{3}{2} \,2^{-\ell_0}$ the set of points where all local densities $\wt{\sigma}_{\ell_j}(\cdot)$, $0 \le j \le J$, lie within the fixed interval $[\wt{\alpha}, 1 - \wt{\alpha}]$, with $\wt{\alpha}$ the constant from (\ref{1.35}) below. More precisely, we consider $J \ge 1$, $c_0  ( = d 2^{d-1})$ as in (\ref{1.8}), and we define
\begin{equation}\label{1.27}
\mbox{$L(J) =$ the smallest integer $L \ge 5$ such that $c_0 \, 2^{-L} \le (200 J)^{-1}$}.
\end{equation}

\medskip\n
We also consider a sequence of decreasing spatial scales $2^{-\ell_0} > 2^{- \ell_1} > \dots > 2^{-\ell_J}$, which are well-separated in the sense that
\begin{equation}\label{1.28}
\mbox{$\ell_{j+1} \ge \ell_j + L(J)$, for $0 \le j < J$}.
\end{equation}
Further, we introduce the increasing sequence of compact sub-intervals of $(0,1)$:
\begin{equation}\label{1.29}
I_j = \Big[\mbox{\f $\dis\frac{1}{2}$} - \mbox{\f $\dis\frac{j}{100J}$}, \; \mbox{\f $\dis\frac{1}{2}$} + \mbox{\f $\dis\frac{j}{100 J}$}\Big], \; 0 \le j \le J,
\end{equation}

\medskip\n
as well as the non-decreasing sequence of stopping times
\begin{equation}\label{1.30}
\gamma_0 = H_{\{\wh{\sigma}_{\ell_0} \in I_0\}} \;\; \mbox{and for} \;\; 0 \le j < J, \;\gamma_{j+1} = \gamma_j + H_{\{\wh{\sigma}_{\ell_{j+1}} \in I_{j+1}\}} \circ \theta_{\gamma_j}.
\end{equation}

\medskip\n
The next proposition will enter the proof of the main Theorem \ref{theo2.1} in Section 2. We recall (\ref{1.3}) and (\ref{1.20}) for notation.

\begin{prop}\label{prop1.4}
Assume that $J \ge 1$, and that $\ell_j, 0 \le j \le J$, satisfy (\ref{1.28}). Denote by $\cC$ the event 
\begin{equation}\label{1.31}
\cC = \{ \gamma_0  = 0\} \cap \bigcap\limits_{0 \le j < J} \{\gamma_{j+1} < \gamma_j + \tau_{2^{- \ell_j}} \circ \theta_{\gamma_j}\}.
\end{equation}

\medskip\n
Then, for $x$ in $\IR^d$ such that $\wh{\sigma}_{\ell_0} (x) = \frac{1}{2}$, one has
\begin{equation}\label{1.32}
P_x[\cC] \ge c_2(J).
\end{equation}
Moreover, on the event $\cC$, one has
\begin{align}
&\sup \{|X_s - X_{\gamma_j}|_\infty; \; \gamma_j \le s \le \gamma_J\} \le \dis\frac{3}{2} \;2^{-\ell_j}, \; \mbox{for all $0 \le j < J$, and}\label{1.33}
\\[1ex]
&\wt{\sigma}_{\ell_j} (X_{\gamma_J}) \in [\wt{\alpha}, 1 - \wt{\alpha}], \; \mbox{for all $0 \le j \le J$, where} \label{1.34}
\\[1ex]
& \wt{\alpha} = \dis\frac{1}{3} \;4^{-d}. \label{1.35}
\end{align}
\end{prop}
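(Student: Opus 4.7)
The plan is to prove (\ref{1.32}) by iterating Proposition \ref{prop1.3} across the successive scales $\ell_0 < \ell_1 < \dots < \ell_J$, and then to read off (\ref{1.33}) and (\ref{1.34}) directly from the definition of $\cC$ together with the well-separation (\ref{1.28}).

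For the lower bound on $P_x[\cC]$, first note that $\gamma_0 = 0$ under $P_x$ since $\wh{\sigma}_{\ell_0}(x) = \tfrac{1}{2} \in I_0 = \{\tfrac{1}{2}\}$. For the induction, I would apply the strong Markov property at $\gamma_j$: since $\wh{\sigma}_{\ell_j}$ is continuous (by (\ref{1.6})) and $I_j$ is closed, on $\{\gamma_j < \infty\}$ one has $\wh{\sigma}_{\ell_j}(X_{\gamma_j}) \in I_j$. Applying Proposition \ref{prop1.3} at scales $\ell_j, \ell_{j+1}$ with $\beta' = \strokedint_{B(X_{\gamma_j}, 2^{-\ell_j})} \wh{\sigma}_{\ell_{j+1}}(y)\,dy$, the estimate (\ref{1.21}) combined with the choice (\ref{1.27}) of $L(J)$ and the separation (\ref{1.28}) gives $|\wh{\sigma}_{\ell_j}(X_{\gamma_j}) - \beta'| \le c_0\, 2^{-L(J)} \le \tfrac{1}{200 J}$. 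Picking $\delta = \tfrac{1}{200J}$, one easily verifies that $[\beta' - \delta, \beta' + \delta] \subseteq I_{j+1}$ and $\delta \le \beta' \wedge (1-\beta') \wedge \tfrac{1}{4}$, so (\ref{1.22}) yields a conditional lower bound $c_1(\tfrac{1}{200J}) =: c(J)$ on the probability that $\gamma_{j+1} < \gamma_j + \tau_{2^{-\ell_j}} \circ \theta_{\gamma_j}$. Multiplying through the $J$ steps produces (\ref{1.32}) with $c_2(J) = c(J)^J$.

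For (\ref{1.33}), on $\cC$ the definition of $\gamma_{k+1}$ gives $|X_s - X_{\gamma_k}|_\infty < 2^{-\ell_k}$ for $\gamma_k \le s \le \gamma_{k+1}$; telescoping and summing the geometric tail (using $L(J) \ge 5$, hence $\sum_{k \ge 0} 2^{-5k} = \tfrac{32}{31} < \tfrac{3}{2}$) delivers the claim. For (\ref{1.34}), the bound just obtained yields the inclusion $B(X_{\gamma_j}, 2^{-\ell_j}) \subseteq B(X_{\gamma_J}, 4 \cdot 2^{-\ell_j})$, whose volume ratio is $4^{-d}$. Combined with $\wh{\sigma}_{\ell_j}(X_{\gamma_j}) \in I_j \subseteq [\tfrac{1}{2} - \tfrac{1}{100}, \tfrac{1}{2} + \tfrac{1}{100}]$, this directly gives $\wt{\sigma}_{\ell_j}(X_{\gamma_J}) \ge (\tfrac{1}{2} - \tfrac{1}{100})\,4^{-d} \ge \tfrac{1}{3} \cdot 4^{-d} = \wt{\alpha}$, and symmetrically for $1 - \wt{\sigma}_{\ell_j}(X_{\gamma_J})$.

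The only delicate point is the calibration in the inductive step: the definition (\ref{1.27}) of $L(J)$ is engineered precisely so that the approximation error $c_0 2^{\ell_j - \ell_{j+1}}$ coming from (\ref{1.21}) and the widening gap $\tfrac{1}{100J}$ from $I_j$ to $I_{j+1}$ jointly leave room for a common $\delta = \tfrac{1}{200J}$ admissible in Proposition \ref{prop1.3} at every scale. This is exactly what allows the $J$-fold iteration to close with a strictly positive bound $c_2(J)$ depending only on $d$ and $J$, independently of $U_0$ and of the chosen scales $(\ell_j)_{0 \le j \le J}$ satisfying (\ref{1.28}).
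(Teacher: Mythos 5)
Your proposal is correct and follows essentially the same route as the paper: induction on $j$ via the strong Markov property and Proposition \ref{prop1.3} with $\delta = (200J)^{-1}$ (justified by (\ref{1.27})--(\ref{1.29}) and (\ref{1.21})) to get $c_2(J) = c_1((200J)^{-1})^J$, then the geometric tail bound and the inclusion $B(X_{\gamma_j},2^{-\ell_j}) \subseteq B(X_{\gamma_J},4\cdot 2^{-\ell_j})$ for (\ref{1.33})--(\ref{1.34}). The only cosmetic difference is that you use $I_j \subseteq [\tfrac{49}{100},\tfrac{51}{100}]$ where the paper uses the weaker $I_j \subseteq [\tfrac{1}{3},\tfrac{2}{3}]$; both give $\wt{\sigma}_{\ell_j}(X_{\gamma_J}) \in [\wt{\alpha},1-\wt{\alpha}]$.
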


\begin{proof}
We first prove (\ref{1.32}). We will use induction and Proposition \ref{prop1.3}. We choose (in the notation of Proposition \ref{prop1.3})
\begin{equation}\label{1.36}
\delta = (200 J)^{-1} \big( \le \mbox{\f $\dis\frac{1}{4}$}\big).
\end{equation}

\n
We want to show by induction on $0 \le j \le J$ (with $c_1(\delta)$ as in (\ref{1.22})) that
\begin{equation}\label{1.37}
\begin{array}{l}
P_x[\cC_j] \ge c_1(\delta)^j, \;\mbox{with} 
\\
\cC_j = \{\gamma_0 = 0, \gamma_1 < \gamma_0 + \tau_{2^{- \ell_0}} \circ \theta_{\gamma_0}, \dots, \gamma_j < \gamma_{j-1} + \tau_{2^{- \ell_{j-1}}} \circ \theta_{\gamma_{j-1}}\}.
\end{array}
\end{equation}

\medskip\n
When $j = 0$, since $\wh{\sigma}_{\ell_0}(x) = \frac{1}{2}$, we find that $P_x[\gamma_0 = 0] = 1$ and (\ref{1.37}) is true. Assume now that for some $0 \le j < J$, (\ref{1.37}) is satisfied. We now define $\beta'_{j+1} = \strokedint_{B(X_{\gamma_j}, 2^{-\ell_j})}     \wh{\sigma}_{\ell_{j+1}}(y)\, dy$ and note that on $\cC_j$ due to (\ref{1.30}), one has $\wh{\sigma}_{\ell_j}(X_{\gamma_j}) \in I_j$. Moreover, by (\ref{1.21}) and (\ref{1.28}), we have
\begin{equation}\label{1.38}
\begin{array}{l}
|\wh{\sigma}_{\ell_j} (X_{\gamma_j}) - \beta'_{j+1}| \le c_0 \,2^{-L(J)} \stackrel{(\ref{1.27})}{\le} (200 J)^{-1} (= \delta), \;\mbox{so that} 
\\[1ex]
\beta'_{j+1}  \in \Big[ \mbox{\f $\dis\frac{1}{2}$} -  \mbox{\f $\dis\frac{j}{100 J}$} - \mbox{\f $\dis\frac{1}{200 J}$} , \; \mbox{\f $\dis\frac{1}{2}$} + \mbox{\f $\dis\frac{j}{100 J}$} + \mbox{\f $\dis\frac{1}{200 J}$}\Big] \; \mbox{and}
\\[2ex]
\beta'_{j+1} +  \Big[ - \mbox{\f $\dis\frac{1}{200 J}$} , \mbox{\f $\dis\frac{1}{200 J}$}\Big] \subseteq I_{j+1}.
\end{array}
\end{equation}

\bigskip\n
We can now apply the strong Markov property at time $\gamma_j$ and find that
\begin{equation}\label{1.39}
\begin{split}
P_x [\cC_{j+1}] & = P_x[\cC_j \cap \{ \gamma_{j+1} < \gamma_j + \tau_{2^{- \ell_j}} \circ \theta_{\gamma_j}\}]
\\
& = E_x [\cC_j, P_{X_{\gamma_j}}[H_{\{\wh{\sigma}_{\ell_{j+1}} \in I_{j+1}\}} < \tau_{2 ^{- \ell_j}}\}] \stackrel{(\ref{1.22}), {\rm induction}}{\ge} c_1(\delta)^{j+1}.
\end{split}
\end{equation}

\medskip\n
This completes the proof by induction of (\ref{1.37}) and hence of (\ref{1.32}) with $c_2(J) = c_1(\delta)^J$. As for (\ref{1.33}), we note that on $\cC$, for any $0 \le j < J$, one has
\begin{equation}\label{1.40}
\begin{split}
\sup\{|X_s - X_{\gamma_j}|_\infty; \; \gamma_j \le s \le \gamma_J\} &\le 2^{-\ell_j} + 2^{-\ell_{j+1}} + \dots + 2^{-\ell_{J-1}}
\\
& \!\!\!\!\!\stackrel{(\ref{1.28})}{\le} 2^{-\ell_j} \dsl_{m \ge 0} 2^{-m L(J)} < \mbox{\f $\dis\frac{3}{2}$} \;2^{-\ell_j},
\end{split}
\end{equation}

\medskip\n
since $L(J) \ge 5$, so that $1 - 2^{-L(J)} > \frac{2}{3}$. This proves (\ref{1.33}).

\bigskip
We then turn to the proof of (\ref{1.34}). We note that by (\ref{1.33}), on the event $\cC$, one has for any $0 \le j \le J$, $B(X_{\gamma_j}, 2^{-\ell_j}) \subseteq B(X_{\gamma_J}, 4 \cdot 2^{- \ell_j})$, and $\wh{\sigma}_{\ell_j} (X_{\gamma_j}) \in I_j \subseteq [\frac{1}{3}, \frac{2}{3}]$, so that $\wt{\sigma}_{\ell_j}(X_{\gamma_J}) \in [\frac{1}{3} \;4^{-d}, 1 - \frac{1}{3} \;4^{-d}] = [\wt{\alpha}, 1 - \wt{\alpha}]$, and (\ref{1.34}) follows. This concludes the proof of Proposition \ref{prop1.4}.
\end{proof}

\section{Resonances}

In this section we introduce certain resonance sets where at least $J$ among a larger collection of local densities $\wt{\sigma}_\ell$ of $U_1$, corresponding to well-separated spatial scales, simultaneously take non-degenerate values in $[\wt{\alpha}, 1 - \wt{\alpha}]$,  cf.~(\ref{1.35}). Our main object is Theorem \ref{theo2.1} below. It shows that for Brownian motion starting at a location where all but finitely many of the local densities $\wh{\sigma}_\ell$ are at most $\frac{1}{2}$, it is hard to avoid such a resonance set. An important induction step for the proof of Theorem \ref{theo2.1} is contained in Lemma \ref{lem2.2}. We will then use Theorem \ref{theo2.1} as a main tool in the next section when considering porous interfaces which are markedly felt in the vicinity of $S = \partial U_0 = \partial U_1$. The approach we use remains pertinent in the two-dimensional case, but the formulation of a relevant version of Theorem \ref{theo2.1} when $d=2$ involves some modification of the set-up (with some killing of Brownian motion). For conciseness and because the application in Section 4 only pertains to the case $d \ge  3$, we assume from now on that
\begin{equation}\label{2.1}
d \ge 3.
\end{equation}

\n
As in the previous section, we consider $U_0$ a non-empty, bounded, Borel subset of $\IR^d$ and the associated local density functions $\wh{\sigma}_\ell, \wt{\sigma}_\ell$, cf.~(\ref{1.3}). We now want to introduce the resonance set. To this end, we consider $\ell_* \ge 0$ (so that $2^{-\ell_*}$ will bound from above the scales under consideration, and in some sense bound from below the distance of the starting point of Brownian motion to $U_1$), $J \ge 1$ will control the strength of the resonance, $L \ge L(J)$ (with $L(J)$ as in (\ref{1.27})), will govern the separation of scales, and $I \ge 1$, will control the number of scales inspected. We then define (with $\IN = \{0,1,2,\dots\}$)
\begin{align}
\ell_0 &= \inf\{ \ell \in (J+1) \,L \IN; \; \ell \ge \ell_*\}, \label{2.2}
\\[1ex]
\cA_* & = \{ \ell \in  L \IN; \; \ell_0 \le \ell < \ell_0 + I(J+1) \,L\} \;\;\mbox{(so that $|\cA_*| = I(J+1)$)},\label{2.3}
\\[1ex]
\cA  & = \{ \ell \in  (J+1)\, L \IN; \; \ell_0 \le \ell < \ell_0 + I(J+1) \,L\} \;\;\mbox{(so that $|\cA| = I$)}.\label{2.4}
\end{align}

\medskip\n
The resonance set is then defined (with $\wt{\alpha}$ as in (\ref{1.35})) as:
\begin{equation}\label{2.5}
{\rm Res} = \{x \in \IR^d; \;\dsl_{\ell \in \cA_*} 1\{\wt{\sigma}_\ell (x) \in [\wt{\alpha}, 1 - \wt{\alpha}]\} \ge J\},
\end{equation}

\n
and we sometimes write ${\rm Res} (U_0, \ell_*,J, L, I)$, if we want to recall the parameters entering its definition. Note that the functions $\wt{\sigma}_\ell$, $\ell \ge 0$, are continuous, cf.~Lemma \ref{lem1.1}, and $\cA_*$ finite, so that the resonance set Res is a (possibly empty) compact subset of $\IR^d$.

\medskip
To describe the collection of subsets $U_0$ under consideration in the bounds we wish to derive in Theorem \ref{theo2.1}, we consider some non-empty compact subset $A$ in $\IR^d$ and introduce for $\ell_* \ge 0$ as above, 
\begin{equation}\label{2.6}
\begin{split}
\cU_{\ell_*,A} = &\;\mbox{the collection of bounded Borel subsets $U_0$ of $\IR^d$ such that}
\\
&\; \mbox{$\wh{\sigma}_\ell(x) \le \frac{1}{2}$, for all $x \in A$ and $\ell \ge \ell_*$},
\end{split}
\end{equation}
as well as

\vspace{-4ex}
\begin{equation}\label{2.7}
\cU_{\ell_*} = \cU_{\ell_*,A = \{0\}}.
\end{equation}

\medskip\n
We are now ready to state the main result of this section. It provides stretched exponential bounds in $I$ on the probability that Brownian motion starting in $A$ avoids the resonance set if $U_0 \in \cU_{\ell_*,A}$, $L \ge L(J)$ and $I$ is large. Incidentally, let us point out that the resonance set need not ``block'' $A$ in a topological sense: $A$ may well lie in the unbounded component of the complement of the resonance set, see Remark \ref{rem2.4}.

\begin{thm}\label{theo2.1}
For $J, I \ge 1$, $A$ non-empty compact subset of $\IR^d$, define (with {\rm Res} as in (\ref{2.5}))
\begin{equation}\label{2.8}
\begin{split}
\Phi_{J,I,A} & = \sup\limits_{\ell_* \ge 0} \; \,\sup\limits_{U_0 \in \cU_{\ell_*,A}}\,\, \sup\limits_{L \ge L(J)} \;\;\sup\limits_{x \in A} \;P_x[H_{{\rm Res}} = \infty], \;\mbox{and}
\\
\Phi_{J,I} & = \Phi_{J,I,A = \{0\}}.
\end{split}
\end{equation}

\medskip\n
Then, the case $A = \{0\}$ is maximal in the sense that
\begin{equation}\label{2.9}
\Phi_{J,I,A} \le \Phi_{J,I},
\end{equation}

\medskip\n
and as $I \r \infty$, one has the stretched exponential bound:
\begin{equation}\label{2.10}
\limsup\limits_I \;I^{-1/2^{J-1}} \log \Phi_{J,I} \le \log \big(1 - c_2(J)\big) (< 0).
\end{equation}
\end{thm}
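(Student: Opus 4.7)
The plan is to prove (\ref{2.9}) by an elementary translation argument, and (\ref{2.10}) by induction on $J$, leveraging Proposition \ref{prop1.4} as the single-shot ingredient and the forthcoming Lemma \ref{lem2.2} as the recursive backbone.

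For (\ref{2.9}), I fix $x \in A$ and $U_0 \in \cU_{\ell_*, A}$ and set $U_0' = U_0 - x$. The local densities are equivariant under translation, so $\wh{\sigma}_\ell^{U_0'}(0) = \wh{\sigma}_\ell^{U_0}(x) \le \frac{1}{2}$ for all $\ell \ge \ell_*$, which places $U_0' \in \cU_{\ell_*}$. The resonance set satisfies ${\rm Res}(U_0') = {\rm Res}(U_0) - x$, and the law of Brownian motion from $x$ pushes forward to that from $0$ under the same shift. Hence $P_x[H_{{\rm Res}(U_0)} = \infty] = P_0[H_{{\rm Res}(U_0')} = \infty] \le \Phi_{J, I}$, and taking the supremum over the admissible parameters yields (\ref{2.9}).

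For (\ref{2.10}), the exponent $I^{1/2^{J-1}}$ signals a recursive structure where each inductive step halves the exponent in $I$. After using (\ref{2.9}) to reduce to $A = \{0\}$, I would treat the base case $J = 1$ by iterating Proposition \ref{prop1.4} (with $J = 1$) over the $I$ widely separated scales in $\cA$: at each scale, the strong Markov property is invoked at a stopping time that produces a density-balanced starting point, and Proposition \ref{prop1.4} then delivers a hit of the resonance set with conditional probability $\ge c_2(1)$, giving $\Phi_{1, I} \le (1 - c_2(1))^{c I}$. The inductive step $J - 1 \Rightarrow J$ is the heart of the argument and should be packaged in Lemma \ref{lem2.2}: I envisage grouping the $I$ outer scales in $\cA$ into roughly $I^{1/2^{J-1}}$ sub-attempts. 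Each sub-attempt consists of first invoking the induction hypothesis on a smaller collection of scales to produce, with high probability, a point where $\wh{\sigma}_{\ell}$ is balanced (i.e.\ close to $\frac{1}{2}$) at some chosen scale, and then firing Proposition \ref{prop1.4} from that point to obtain a resonance hit at level $J$ with probability $\ge c_2(J)$. The scale separation built into $\cA$ (gaps of $(J+1) L$) and $\cA_*$ (gaps of $L \ge L(J)$) ensures, via the strong Markov property, that failed sub-attempts can be compounded multiplicatively without spatial interference.

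The main obstacle will be the clean formulation of the induction step. Proposition \ref{prop1.4} requires a starting point at which $\wh{\sigma}_{\ell_0} = \frac{1}{2}$, while the hypothesis $U_0 \in \cU_{\ell_*}$ only guarantees $\wh{\sigma}_\ell(0) \le \frac{1}{2}$. Bridging this gap -- showing that Brownian motion either reaches a balanced starting point at some scale in $\cA$ or that the unbalanced alternative can itself be absorbed by the previous inductive layer -- is what I expect Lemma \ref{lem2.2} to encode. Carefully maintaining the invariant ``$\wh{\sigma}_\ell \le \frac{1}{2}$ at the current point for all $\ell$ above the current working scale'' across recursive calls, while keeping each recursive layer confined to disjoint blocks of scales so that the strong Markov property can be applied cleanly at well-chosen stopping times, is the technical core of the proof.
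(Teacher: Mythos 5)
Your argument for (\ref{2.9}) is correct and is the paper's argument verbatim: translate $U_0$ and the resonance set by $x$, note $U_0 - x \in \cU_{\ell_*}$, and use translation invariance of Brownian motion.

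For (\ref{2.10}) you have the right qualitative picture (a recursion of depth $J$ in which each level costs a square root in $I$), but both the base case and the location of the technical difficulty are misplaced. First, the ``obstacle'' you flag --- that $U_0 \in \cU_{\ell_*}$ only gives $\wh{\sigma}_\ell(0) \le \tfrac12$ whereas Proposition \ref{prop1.4} wants a point where $\wh{\sigma}_\ell = \tfrac12$ --- is not an obstacle at all: since $U_0$ is bounded, $\wh{\sigma}_\ell(X_s)\to 1$ as $s\to\infty$, so the continuous function $s\mapsto \wh{\sigma}_\ell(X_s)$ starts $\le \tfrac12$ and crosses $\tfrac12$ $P_0$-a.s.\ for every $\ell\in\cA$. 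The crossing points are exactly what the paper uses as launch sites for Proposition \ref{prop1.4}. Second, the paper's induction is not on $J$ but on the \emph{resonance level} $k\in\{1,\dots,J\}$, with $J$ fixed: Lemma \ref{lem2.2} introduces quantities $\Gamma^{(J)}_k(I)$ and proves $\Gamma^{(J)}_1(I)=0$ exactly (because at the first crossing time one already has $\wt\sigma_{\wh\ell_1}(X_{S_1})\in[\wt\alpha,1-\wt\alpha]$), together with the recursion $\Gamma^{(J)}_{k+1}(I)\le (1-c_2(J))^{\sqrt I - 1} + I^{1+\frac{k-1}{2}}\,\Gamma^{(J)}_k([\sqrt I]-k+1)$; iterating this from $k=1$ to $k=J$ gives the exponent $1/2^{J-1}$. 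Your proposed base-case bound $\Phi_{1,I}\le (1-c_2(1))^{cI}$ reflects a different (and not clearly attainable) scheme.

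The genuine technical core, which your proposal does not name, is that the ``attempts'' launched at successive crossing times $S_1,\dots,S_I$ are \emph{not} spatially independent: the balls $B(X_{S_i},2\cdot 2^{-\wh\ell_i})$ may nest or overlap, so one cannot simply multiply $(1-c_2(J))$ across all $I$ attempts. Lemma \ref{lem2.2} deals with this by a dichotomy: either the exit times stay well separated ($T_i < S_{i+\Delta}$ for all $i$, with $\Delta=[\sqrt I]$), in which case one can apply the strong Markov property and Proposition \ref{prop1.4} at every $\Delta$-th step to get $(1-c_2(J))^{\sqrt I - 1}$; or some $T_{i_0} \ge S_{i_0+\Delta}$, which forces many crossing points into the same ball, so the exit point $X_{T_{i_0}}$ already has at least $k$ of the densities $\wt\sigma_{\wh\ell_i}$ balanced --- unless fewer than $k$ of the relevant $T_i$ dominate $T_{i_0}$, in which case one extracts a sub-$I$-family of size $\Delta-(k-1)$ and invokes $\Gamma_k$. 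Packaging the failed attempts into a smaller family for the lower resonance level, rather than absorbing an ``unbalanced alternative,'' is what makes the recursion work.
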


\begin{proof}
We first prove (\ref{2.9}). To this end, we simply note that $U_0 \in \cU_{\ell_*,A}$ and $x \in A$ implies that $U_0 - x \in \cU_{\ell_*}$ and ${\rm Res} (U_0 - x, \ell_*, J, L, I) = {\rm Res} (U_0, \ell_*, J, L, I) - x$. The claim (\ref{2.9}) simply follows by translation invariance of Brownian motion.

\medskip
We now turn to the proof of (\ref{2.10}). We consider $\ell_* \ge 0$, $J \ge 1$, $L \ge L(J)$, $I \ge 1$, as well as some $U_0 \in \cU_{\ell_*}$. We then introduce the notion of an $I$-family, which is constituted of stopping times $S_i$, $0 \le i \le I$, of a random finite subset $\cL$ of $(J+1) \, L \IN$, and of random integer valued functions $\wh{\ell}_i, 1 \le i \le I$, such that

\begin{equation}\label{2.11}
\left\{ \begin{array}{rl}
{\rm i)} & \mbox{$0 \le S_0 \le S_1 \le \dots \le S_I$ are $P_0$-a.s. finite $(\cF_t)$-stopping times,}
\\[1.5ex]
{\rm ii)} & \mbox{$\cL$ is an $\cF_{S_0}$-measurable finite subset of $(J+1) \,L\IN$, with $| \cL | \ge I$},
\\[1.5ex]
{\rm iii)} &\mbox{$\wh{\ell}_i, 1 \le i \le I$, are respectively $\cF_{S_i}$-measurable, distinct and}
\\
&\mbox{$\cL$-valued},
\\[1.5ex]
{\rm iv)} & \mbox{$P_0$-a.s., $\wh{\sigma}_{\wh{\ell}_i}(X_{S_i}) = \mbox{\f $\dis\frac{1}{2}$}$, for $1 \le i \le I$}.
\end{array}\right.
\end{equation}

\medskip\n
We recall that $\wh{\sigma}_\ell (0) \le \frac{1}{2}$ for $\ell \ge \ell_*$, in particular for all $\ell \in \cA$ in (\ref{2.4}). As a result, $P_0$-a.s., for all $\ell \in \cA$, the continuous non-negative functions $s \ge 0 \rightmapsto \wh{\sigma}_\ell(X_s)$ start at a value smaller or equal to $\frac{1}{2}$ and reach at some point the value $\frac{1}{2}$ (they eventually become equal to $1$ for large $s$). There is however in general no prescribed order in which these ``crossings'' happen, see Remark \ref{rem2.4}. An example of an $I$-family to keep in mind thus corresponds to the choice
\begin{equation}\label{2.12}
\left\{ \begin{split}
\cL & = \cA \;\;\mbox{(see (\ref{2.4}))},
\\[1ex]
S_0 & = 0,
\\[1ex]
S_1 & = \inf\big\{s \ge 0; \wh{\sigma}_\ell (X_s) = \mbox{\f $\dis\frac{1}{2}$} , \;\mbox{for some $\ell \in \cL\big\}$},
\\[2ex]
\wh{\ell}_1 & = \max\big\{\ell \in \cL; \,\wh{\sigma}_\ell (X_{S_1}) = \mbox{\f $\dis\frac{1}{2}$}\big\}, \;\mbox{if $S_1 < \infty$, and}
\\[-0.5ex]
&\quad \; \mbox{$\wh{\ell}_1 = \max \{ \ell \in \cL\}$, if $S_1 = \infty$},
\\[2ex]
S_2 & = \inf\big\{s \ge S_1; \,\wh{\sigma}_\ell (X_s) = \mbox{\f $\dis\frac{1}{2}$}, \;\mbox{for some $\ell \in \cL \backslash \{\wh{\ell}_1\}\big\}$},
\\[2ex]
\wh{\ell}_2 & = \mbox{$\max\big\{\ell \in \cL \backslash \{\wh{\ell}_1\}, \; \wh{\sigma}_\ell(X_{S_2}) =\mbox{\f $\dis\frac{1}{2}$}\}$, if $S_2 < \infty$, and}
\\[-0.5ex]
&\quad \; \mbox{$\wh{\ell}_2 = \max \{ \ell \in \cL \backslash \{\wh{\ell}_1\}\big\}$,   if $S_2 = \infty$,}
\\[-1ex]
\vdots
\\
S_I & = \mbox{$\inf\big\{ s \ge S_{I - 1}; \;\wh{\sigma}_\ell(X_s) = \mbox{\f $\dis\frac{1}{2}$}$, for $\ell \in \cL \backslash \{ \wh{\ell}_1,\dots, \wh{\ell}_{I - 1}\}\big\}$},
\\[2ex]
\wh{\ell}_I & = \mbox{the unique element of $\cL \backslash \{ \wh{\ell}_1, \dots, \wh{\ell}_{I-1}\big\}$ (when $S_I = \infty$ note that}
\\[-0.5ex]
&\quad \; \mbox{$\max\big\{\ell \in \cL \backslash \{ \wh{\ell}_1,\dots, \wh{\ell}_{I-1}\}\big\}$ coincides with the unique element of}
\\[-0.5ex]
&\quad \; \cL \backslash \{ \wh{\ell}_1,\dots, \wh{\ell}_{I-1}\}).
\end{split}\right.
\end{equation}

\bigskip\n
The formulas for $\wh{\ell}_i$ when $S_i = \infty$ are merely here for completeness and pertain to $P_0$-negligible events.

\medskip
Given an $I$-family as in (\ref{2.11}), we also define the stopping times
\begin{equation}\label{2.13}
\begin{array}{l}
T_i = \inf\{s \ge S_i;  \;|X_s - X_{S_i}|_\infty \ge 2 \cdot 2^{-\wh{\ell}_i}\} 
\\
\mbox{(understood as $+ \infty$, if $S_i = \infty$), for $1 \le i \le I$},
\end{array}
\end{equation}

\n
as well as the $\cF_{S_0}$-measurable subsets of ``intermediate labels'' and of ``labels''
\begin{equation}\label{2.14}
\cL_{\rm int} = \{\ell + j L; \; \ell \in \cL, \; 1 \le j \le J\} \; \mbox{and $\cL_* = \cL \cup \cL_{\rm int}$}.
\end{equation}
Note that $\cL \subseteq (J+1) \,L \IN$, so that
\begin{equation}\label{2.15}
\cL \cap \cL_{\rm int} = \emptyset,
\end{equation}
and that in the notation of (\ref{2.3}), (\ref{2.4})
\begin{equation}\label{2.16}
\mbox{when $\cL = \cA$, then $\cL_* = \cA_*$}.
\end{equation}
Further we introduce for $1 \le k \le J$ the $(\cL_*,k)$-resonance set
\begin{equation}\label{2.17}
{\rm Res}_{\cL_*,k} = \Big\{x \in \IR^d; \, \dsl_{\ell \in \cL_*} 1\{\wt{\sigma}_\ell (x) \in [\wt{\alpha}, 1 - \wt{\alpha}]\} \ge k\Big\}
\end{equation}

\n
(so when $\cL = \cA$ as in (\ref{2.4}), and $k = J$, we recover the resonance set Res$(U_0,\ell_*,J,L,I)$ from (\ref{2.5})).

\medskip
We now introduce an important quantity on which we will derive upper bounds by induction on $k$ in the crucial Lemma \ref{lem2.2} below. Namely, for $1 \le k \le J$ and $I \ge 1$, we set
\begin{equation}\label{2.18}
\Gamma^{(J)}_k (I) = \sup P_0 [\inf\{s \ge S_0; X_s \in {\rm Res}_{\cL_*,k}\} > \max \limits_{1 \le i \le I} T_i],
\end{equation}

\n
where the supremum is taken over all $I$-families (a non-empty collection by (\ref{2.12})), and we set by convention
\begin{equation}\label{2.19}
\mbox{$\Gamma_k^{(J)}(I) = 1$, when $1 \le k \le J$ and $I \le 0$}.
\end{equation}

\n
We will also drop the superscript $(J)$ from the notation when this causes no confusion. 

At this point, the Reader may first read the statement of the next lemma, skip its proof, and directly proceed above (\ref{2.34}) to see how the proof of Theorem \ref{theo2.1} is completed. We now have (where we fix $\ell_* \ge 0$, $J \ge 1$, $L \ge L(J)$, $U_0 \in \cU_{\ell_*}$):

\begin{lem}\label{lem2.2}
\begin{equation}\label{2.20}
\mbox{$\Gamma_1^{(J)}(I) = 0$, for all $I \ge 1$},
\end{equation}

\n
and for $1 \le k  < J$, $I \ge 1$, setting $\Delta = [\sqrt{I}] \;\;( \ge 1)$, 
\begin{equation}\label{2.21}
\Gamma_{k+1}^{(J)}(I) \le \big(1 - c_2 (J)\big)^{\sqrt{I} - 1} + I^{1 + \frac{k-1}{2}} \Gamma_k^{(J)} (\Delta - k+1).
\end{equation}
\end{lem}

\begin{proof}
We first prove (\ref{2.20}). We consider $I \ge 1$ and some $I$-family, cf.~(\ref{2.11}). Then, $P_0$-a.s., $\wh{\sigma}_{\wh{\ell}_1}(X_{S_1}) = \frac{1}{2}$, so that $U_1$ and $U_0$ have relative volume $\frac{1}{2}$ in $B(X_{S_1},2^{-\wh{\ell}_1}$). It thus follows that $P_0$-a.s., $\wt{\sigma}_{\wh{\ell}_1}(X_{S_1})$ lies in $[\frac{1}{2} \cdot 4^{-d}, 1 - \frac{1}{2} \cdot 4^{-d}] \subseteq [\wt{\alpha}, 1 - \wt{\alpha}]$, so that $X_{S_1} \in {\rm Res}_{\cL_*,1}$. Hence, the probability in the right-hand side of (\ref{2.18}) equals $0$, when $k = 1$ and (\ref{2.20}) follows. 

\medskip
We now turn to the proof of (\ref{2.21}). We introduce the notation
\begin{equation}\label{2.22}
m_\Delta = \Big[\mbox{\f $\dis\frac{I - 1}{\Delta}$}\Big], \;\mbox{so that $1 + m_\Delta \, \Delta \le I < 1 + (m_\Delta + 1) \, \Delta$}.
\end{equation}

\medskip\n
We first assume that $I \ge 2$, (the case $I = 1$ will be straightforward to handle) and note that, as we explain below,
\begin{equation}\label{2.23}
I - \Delta \ge 1
\end{equation}

\n
(indeed for $I = 2$, $\Delta = 1$, for $I = 3$, $\Delta = 1$, and for $I \ge 4$, $I - \Delta \ge I - \sqrt{I} = \sqrt{I} (\sqrt{I} - 1) \ge \sqrt{I} \ge 2)$.

\medskip
We thus consider $1 \le k < J$ as well as $I \ge 2$, and want to prove (\ref{2.21}). We consider an $I$-family and write
\begin{equation}\label{2.24}
\begin{array}{l}
P_0 [\inf\{s \ge S_0; X_s \in {\rm Res}_{\cL_*, k + 1}\} > \max\limits_{1 \le i \le I} \,T_i] \le a_1 + a_2, \;\mbox{where}
\\[3ex]
\mbox{$a_1 = P_0[T_i < S_{i+\Delta}$ for all $1 \le i \le I - \Delta$, and}
\\[1ex]
\mbox{$\inf\{s \ge S_0$; $X_s \in {\rm Res}_{\cL_*,k+1}\} > \max\limits_{1 \le i \le I} \,T_i]$},
\\[4ex]
\mbox{$a_2 = P_0[T_i \ge S_{i+\Delta}$ for some $i \in \{1,\dots,I-\Delta\}$, and}
\\[1ex]
\mbox{$\inf\{s \ge S_0$; $X_s \in {\rm Res}_{\cL_*,k+1}\} > \max\limits_{1 \le i \le I} \,T_i]$}.
\end{array}
\end{equation}

\medskip\n
We first bound $a_1$. As a shorthand we write $i_\Delta = 1 + m_\Delta \,\Delta$, and find that 
\begin{equation*}
a_1 \le P_0[S_1 < T_1 < S_{1 + \Delta}  < T_{1 + \Delta} < \dots < S_{i_\Delta} < T_{i_\Delta} < \inf\{s \ge S_0; X_s \in {\rm Res}_{\cL_*,k+1}\}].
\end{equation*}

\n
We can use the strong Markov property at time $S_{i_\Delta}$ and find that the last expression is smaller or equal to
\begin{equation}\label{2.25}
\begin{split}
E_0\big[&S_1 < T_1 < \dots < S_{i_\Delta} < \inf\{s \ge S_0; X_s \in {\rm Res}_{\cL_*, k+1}\},
\\ 
&\wt{P}_{X_{S_{i_\Delta}}}[\inf\{s \ge 0; \,|\wt{X}_s - \wt{X}_0|_\infty \ge 2 \cdot 2^{-\wh{\ell}_{i_\Delta}}\} <
\\
& \inf\{s \ge 0; \, \wt{X}_s \in {\rm Res}_{\cL_*,k+1}\}]\big],
\end{split}
\end{equation}

\n
where $\wt{X}_\point$ denotes the canonical Brownian motion under the Wiener measure $\wt{P}_{X_{S_{i_\Delta}}}$ starting from $X_{S_{i_\Delta}}$, and the respectively $\cF_{S_0}$-measurable map $\cL_*$ (with an at most countable set of possible values), and the $\cF_{S_{i_\Delta}}$-measurable map $\wh{\ell}_{i_\Delta}$ are not integrated under the measure $\wt{P}_{X_{S_{i_\Delta}}}$.

\medskip
If we now choose $x = X_{S_{i_\Delta}}$ in (\ref{1.32}) (and $\ell_0 = \wh{\ell}_{i_\Delta} \in \cL$, by (\ref{2.11}) iii)), then Proposition \ref{prop1.4} and the fact that $P_0$-a.s., $\wh{\sigma}_{\wh{\ell}_{i_\Delta}}(X_{S_{i_\Delta}}) = \frac{1}{2}$ (by (\ref{2.11}) iv)), imply that $P_0$-a.s.,
\begin{equation}\label{2.26}
\begin{array}{l}
\wt{P}_{X_{s_{i_\Delta}}} [\inf\{s \ge 0; \,|\wt{X}_s - \wt{X}_0|_\infty \ge 2 \cdot 2^{-\wh{\ell}_{i_\Delta}}\} >
\\
\inf\{ s \ge 0; \, \wt{X}_s \in {\rm Res}_{\cL_*,k+1}\}] \ge c_2(J).
\end{array}
\end{equation}
Hence, the expression in (\ref{2.25}) is at most
\begin{equation}\label{2.27}
P_0 [S_1 < T_1 < \dots < T_{i_\Delta - \Delta} < \inf\{s \ge 0,\, X_s \in {\rm Res}_{\cL_*,k+1}\}] (1 - c_2(J)).
\end{equation}
By induction, we then find that
\begin{equation}\label{2.28}
\begin{split}
a_1  \le &\; \big(1-c_2(J)\big)^{m_\Delta} \,P_0[S_1 < T_1 < \inf\{s \ge S_0; \, X_s \in {\rm Res}_{\cL_*,k+1}\}]   
\\
\le &\; \big(1 - c_2(J)\big)^{m_\Delta + 1} \le  \big(1-c_2(J)\big)^{\sqrt{I} - 1}, \;\mbox{since} \
\\
&\; m_\Delta + 1 \stackrel{(\ref{2.22})}{>} (I - 1) \,\Delta^{-1} \ge (I-1) / \sqrt{I} \ge \sqrt{I} - 1.
\end{split}
\end{equation}

\medskip\n
We now bound $a_2$ (cf.~(\ref{2.24})). To this end, we first note that
\begin{equation}\label{2.29}
\begin{split}
a_2  = & \;P_0[T_i \ge S_{i+\Delta} \;\;\mbox{for some} \;\; i \in \{1,\dots,I-\Delta\}, 
\\
&\;\inf\{s\ge S_0; \,X_s \in {\rm Res}_{\cL_*,k+1}\} > \max\limits_{1 \le i \le I} \,T_i]
\\[1ex]
\le &\; I \max\limits_{1 \le i_0 \le I-\Delta} P_0[S_{i_0 + \Delta} \le T_{i_0} \;\;\mbox{and} 
\\
&\;\inf\{s \ge S_0; X_s \in {\rm Res}_{\cL_*,k+1}\} > \max\limits_{1 \le i \le I} \,T_i].
\end{split}
\end{equation}

\n
Denote by $F$ the event inside the probability in the last member of (\ref{2.29}). As we now explain, $P_0$-a.s. on $F$, there are at most $(k-1)$ integer values of $i \in (i_0,i_0 + \Delta]$ such that $T_{i_0} \le T_i$. Indeed, otherwise we can find $k$ values of $i$ in $(i_0,i_0 + \Delta]$ such that with $P_0$-positive measure on $F$, $|X_{T_{i_0}} - X_{S_i}|_\infty \le 2 \cdot 2^{-\wh{\ell}_i}$. Including $i_0$ to this list yields $k+1$ values in $[i_0,i_0 + \Delta]$ such that with $P_0$-positive measure on $F$, $\wh{\sigma}_{\wh{\ell}_i} (X_{S_i}) = \frac{1}{2}$ and $|X_{T_{i_0}} - X_{S_i}|_\infty \le 2 \cdot 2^{-\wh{\ell}_i}$, and hence $\wt{\sigma}_{\wh{\ell}_i}(X_{T_{i_0}}) \in [\frac{1}{2} \, \cdot \,4^{-d}, 1 - \frac{1}{2} \cdot 4^{-d}] \subseteq [\wt{\alpha}, 1 - \wt{\alpha}]$ for these $k+1$ values. This would force that with $P_0$-positive measure on $F$, $X_{T_{i_0}} \in {\rm Res}_{\cL_*, k+1}$, which is impossible by the very definition of $F$.

\bigskip
Assume for the time being that $\Delta\ge k$, and denote by $(i_0,i_0 + \Delta]'$ an arbitrary subset of $(i_0,i_0 + \Delta]$ with $\Delta - (k-1)$ elements. The number of possible choices of such a subset is at most $\Delta^{k-1}$. It now follows from the remark in the paragraph above and from (\ref{2.29}) that
\begin{equation}\label{2.30}
\begin{split}
a_2 \le I \,\Delta^{k-1} \max\limits_{1 \le i_0 \le I-\Delta} \;\max\limits_{(i_0,i_0 + \Delta]'} \;P_0 \big[& S_{i_0} \le S_i \le T_i \le T_{i_0} < 
\\[-0.5ex]
&\inf\{ s \ge S_0, X_s \in {\rm Res}_{\cL_*,k+1}\}, \;\mbox{for all} 
\\
&i \in (i_0,i_0 + \Delta]'\, \big]
\end{split}
\end{equation}

\medskip\n
(and $\max_{(i_0,i_0 + \Delta]'}$ denotes the maximum over all possible subsets $(i_0,i_0 + \Delta]'$ of $(i_0,i_0 + \Delta]$ with $\Delta - (k-1)$ elements).

\medskip
We then set
\begin{equation}\label{2.31}
\begin{split}
\cL' = \cL \backslash \{ \wh{\ell}_{i_0}\} &\;\; \mbox{(an $\cF_{S_{i_0}}$-measurable subset of $(J+1) \,L \IN$, with}
\\
&\;\; | \cL'| = |\cL| - 1 \ge I - 1 \stackrel{(\ref{2.23})}{\ge} \Delta \ge \Delta - (k-1) \ge 1).
\end{split}
\end{equation}

\n
Now $\cL',S_{i_0},S_i,i \in (i_0,i_0 + \Delta]'$, $\wh{\ell}_i,i \in (i_0,i_0 + \Delta]'$ yield (up to the deterministic increasing relabeling of $[i_0,i_0 + \Delta]'$ into $[0,\Delta -(k-1)])$ a $\Delta - (k-1)$-family, cf.~(\ref{2.11}). In addition, on the event under the probability in (\ref{2.30}), one has for all $s \in [S_{i_0}, T_{i_0}]$, $|X_s - X_{S_{i_0}}|_\infty \le 2 \cdot 2^{-\wh{\ell}_{i_0}}$, so that $P_0$-a.s. on this event, $\wh{\sigma}_{\wh{\ell}_{i_0}}(X_{S_{i_0}}) = \frac{1}{2}$ and $\wt{\sigma}_{\wh{\ell}_{i_0}}(X_s) \in [\wt{\alpha}, 1 - \wt{\alpha}]$ for all $s \in [S_{i_0},T_{i_0}]$.

\medskip
It follows that $P_0$-a.s. on the event under the probability in (\ref{2.30}), $T_{i_0} < \inf\{s \ge S_0; \,X_s \in {\rm Res}_{\cL'_*,k}\}$ (where $\cL'_*$ is defined as in (\ref{2.14}) with $\cL'$ now playing the role of $\cL$). This shows that
\begin{equation}\label{2.32}
\begin{split}
a_2 \le &\; I  \Delta^{k-1} \max\limits_{1 \le i_0 \le I-\Delta}\; \max\limits_{(i_0,i_0 + \Delta]'} P_0 \big[\inf\{s \ge S_{i_0}; X_s \in {\rm Res}_{\cL'_*,k}\} 
\\
&\quad \;\,  >\, \max\limits_{ (i_0,i_0 + \Delta]'} T_i\big] \le  I \Delta^{k-1} \Gamma_k\big(\Delta - (k-1)\big)
\end{split}
\end{equation}

\medskip\n
(and as mentioned below (\ref{2.19}) we dropped the superscript $(J)$ from the notation).

\medskip
Since $\Gamma_k(m) = 1$, for $m \le 0$, by convention, see (\ref{2.19}), one can remove the assumption $\Delta \ge k$ and find that (recall $\Delta = [\sqrt{I}]$)
\begin{equation}\label{2.33}
a_2 \le I \Delta^{k-1}\, \Gamma_k\big(\Delta - (k-1)\big) \le I^{1 + \frac{k-1}{2}} \Gamma_k\big(\Delta - (k-1)\big).
\end{equation}

\medskip\n
Adding the bounds (\ref{2.28}) and (\ref{2.33}), we find, coming back to (\ref{2.24}), that
\begin{equation*}
\begin{array}{l}
P_0\big[\inf\{s \ge S_0; \,X_s \in {\rm Res}_{\cL_*,k+1}\} > \max\limits_{1 \le i \le I} T_i\big] \le 
\\
\big(1-c_2(J)\big)^{\sqrt{I} -1} + I^{1 + \frac{k-1}{2}} \Gamma_k (\Delta - k+1).
\end{array}
\end{equation*}

\medskip\n
Taking the supremum over all possible $I$-families yields (\ref{2.21}) when $I \ge 2$. However, when $I = 1$, the right-hand side of (\ref{2.21}) is at least $1$ due to the first term in the right-hand side, and (\ref{2.21}) holds as well. This completes the proof of (\ref{2.21}) and hence of Lemma \ref{lem2.2}.
\end{proof}

\medskip
We now resume the proof of (\ref{2.10}) of Theorem \ref{theo2.1}. We introduce the quantity (for given $J \ge 1$)
\begin{equation}\label{2.34}
\wt{\Gamma}_k^{(J)}(I) = \left\{ \begin{array}{l}
\sup\limits_{\ell_* \ge 0} \; \;\sup\limits_{U_0 \in \cU_{\ell_*}, L \ge L(J)} \,\Gamma_k^{(J)}(I), \; \mbox{for $1 \le k \le J$ and $I \ge 1$},
\\
\\[-1ex]
1, \;\mbox{for $1 \le k \le J$ and $I \le 0$}.
\end{array}\right.
\end{equation}

\medskip\n
From the definition of $\Gamma_k^{(J)}(I)$, see (\ref{2.18}), (\ref{2.17}), and from the $I$-family (\ref{2.12}), as well as (\ref{2.16}), we see that in the notation of (\ref{2.8}), for $J, I \ge 1$, one has
\begin{equation}\label{2.35}
\Phi_{J,I} = \sup\limits_{\ell_* \ge 0} \;\;\sup\limits_{U_0 \in \cU_{\ell_*}} \;\sup\limits_{L \ge L(J)} \,P_0[H_{{\rm Res}} = \infty] \le \wt{\Gamma}_J^{(J)} (I).
\end{equation}

\medskip\n
On the other hand, by Lemma \ref{lem2.2} and (\ref{2.34}), we see that
\begin{equation}\label{2.36}
\left\{ \begin{array}{l}
\wt{\Gamma}_1^{(J)}(I) = 0, \;\;\mbox{for all $I \ge 1$, and}
\\[1ex]
\wt{\Gamma}^{(J)}_{k+1}(I) \le \big(1 - c_2(J)\big)^{\sqrt{I} - 1} \! + I^{1 + \frac{k-1}{2}} \;\wt{\Gamma}_k^{(J)} (\Delta - k+1), 
\\[1.5ex]
\mbox{for $1 \le k \le  J$ and $I \ge 1$.}
\end{array}\right.
\end{equation}

\medskip\n
We will now prove by induction on $k$ that for $1 \le k \le J$,
\begin{equation}\label{2.37}
\limsup\limits_I I^{-1/2^{k-1}} \log \wt{\Gamma}_k^{(J)} (I) \le \log \big(1 - c_2(J)\big).
\end{equation}

\medskip\n
Choosing $k = J$ will complete the proof of (\ref{2.10}) in view of (\ref{2.35}). 

\medskip
By the first line of (\ref{2.36}), the claim (\ref{2.37}) is immediate when $k = 1$. Then, we assume that (\ref{2.37}) holds for some $1\le k < J$. We have
\begin{equation}\label{2.38}
 \begin{array}{l}
\limsup\limits_I \;I^{-1/2^k} \log \wt{\Gamma}^{(J)}_{k+1} (I) \stackrel{(\ref{2.36})}{\le}
\\[1ex]
\limsup\limits_I \;I^{-1/2^k} \;\max\{(\sqrt{I} - 1) \log\big(1 - c_2(J)\big), \Big(1 + \mbox{\f $\dis\frac{k-1}{2}$}\Big) \log I +
\\
 \log \wt{\Gamma}^{(J)}_k ([\sqrt{I}] - k+1)\} \le  \max\Big\{\log \big(1 - c_2(J)\big),
 \\[1ex]
\limsup\limits_I I^{-1/2^k} ([\sqrt{I}] - k+1)^{1/2^{k-1}} \dis\frac{\log \wt{\Gamma}_k^{(J)} ([\sqrt{I}] - k+1)}{([\sqrt{I}] - k+1)^{1/2^{k-1}}}\Big\} \stackrel{\rm induction}{\le}
 \\
 \log\big(1 - c_2(J)\big).
\end{array}
\end{equation}

\medskip\n
This proves (\ref{2.37}) and as explained above concludes the proof of Theorem \ref{theo2.1}.
\end{proof}

\begin{remark}\label{rem2.4} \rm 1) We describe a simple example showing that the compact set $A$ may lie in the unbounded component of the complement of the resonance set defined in (\ref{2.5}) (in particular, the resonance set need not ``block'' $A$ in a topological sense).

\medskip
We consider  $J \ge 2$, $L \ge L(J)$ $(\ge 5)$, and set $\ell_i = \ell_0 + i L$, for $i \ge 0$, where $\ell_0$ $= \ell_*$ so that (\ref{2.2}) holds. We choose $A = \{0\}$, denote by $e_1$ the first vector of the canonical basis of $\IR^d$, and define
\begin{align*}
U_0 & = \bigcup\limits_{k \ge 0} W_k, \; \mbox{where}
\\
W_0 & = B(0,4 \cdot 2^{-\ell_0}) \; \mbox{and for $k \ge 1$, $W_k = W_{k-1} \cup (x_{k-1} + 2^{-\ell_k} V)$}
\end{align*}

\n
with $x_{k-1}$ the rightmost point of $W_{k-1}$ on $\IR_+ \,e_1$ and $V = [0,8 \cdot 2^L] \times [-4,4]^{d-1}$. That is, $W_k$ is obtained by piling on $W_{k-1}$ in the positive $e_1$-direction a small thin rectangular parallelepiped of length $8 \cdot 2^{-\ell_{k-1}}$ in the $e_1$-direction and side-length $8 \cdot 2^{-\ell_k}$ in the other directions. As a shorthand notation we set $\delta_k = 8 \cdot 2^{-\ell_k}$, for $k \ge 0$.

\medskip
Note that $U_0$ is bounded and $d(0,U_1) = 4 \cdot 2^{-\ell_0}$, so $U_0 \in \cU_{\ell_* = \ell_0}$ in the notation of (\ref{2.7}). As we now explain, one also has
\begin{equation}\label{2.39}
\mbox{$\dsl_{k \ge 0} 1\{\wt{\sigma}_{\ell_k} \big(w(t)\big) \in [\wt{\alpha},1 - \wt{\alpha}]\} \le 1$ for all $t \ge 0$, if $w(t) = t \, e_1$ for $t \ge 0$},
\end{equation}
and consequently, for any $I \ge 1$,
\begin{equation}\label{2.40}
w(t) \notin {\rm Res}(U_0, \ell_*, J, L, I) \; \mbox{for all $t \ge 0$}.
\end{equation}

\medskip\n
To prove (\ref{2.39}), we set for $k \ge 0$, $s_k = x_k \cdot e_1 - \frac{1}{2} \, \delta_k$, $t_k = x_k \cdot e_1 + \frac{1}{2} \, \delta_k$ (recall that $\delta_k = 8 \cdot 2^{-\ell_k}$). Then, the function $t \ge 0 \rightmapsto \wt{\sigma}_{\ell_k} (w(t))$ is continuous and non-decreasing. For $t \le s_k$, we have $d(w(t), U_1) \ge 4 \cdot 2^{-\ell_k}$, so that $\wt{\sigma}_{\ell_k}(w(t)) = 0$, and for $t \ge t_k$, we have $d(w(t),W_k) \ge 4 \cdot 2^{-\ell_k}$, so that $1 - \wt{\sigma}_{\ell_k}(w(t)) \le 2^{-L(d-1)}$, which is smaller than $\wt{\alpha} = \frac{1}{3} \cdot 4^{-d}$ (because $2^{(L-2)(d-1)} > 12$, since $L \ge 5$ and $d \ge 3$). It follows that
\begin{equation}\label{2.41}
\mbox{for any $k \ge 0$, $\{t \ge 0; \, \wt{\sigma}_{\ell_k}(w(t)) \in [\wt{\alpha},1 - \wt{\alpha}]\} \subseteq [s_k, t_k]$}\,.
\end{equation}

\medskip\n
On the other hand, for any $ k \ge 0$, $t_k < s_{k+1}$ and the intervals $[s_k,t_k]$, $k \ge 0$, are pairwise disjoint. The claims (\ref{2.39}) and (\ref{2.40}) follow.

\bigskip\n
2) Here is another simple example showing that the order in which the level sets 
$\{\wt{\sigma}_\ell \in [\wt{\alpha}, 1 - \wt{\alpha}]\}$ are crossed along a Brownian path is in general random. We keep the same notation as in 1) above but now define (see Figure 3)
\begin{align*}
U_0 = &  W_2 \cup \wt{W}_1 \backslash T_2\,,
\\[-2ex]
\intertext{where} 
\\[-6ex]
\wt{W}_1 & \mbox{$= [-x_1 \cdot e_1, \; -x_0 \cdot e_1] \times \big[-\mbox{\f $\dis \frac{\delta_1}{2}$}, \mbox{\f $\dis\frac{\delta_1}{2}$}\big]^{d-1}$ and} 
\\[1ex]
T_2 = &\; (- \infty, -x_1 \cdot e_1 + \delta_1) \times \Big(-\mbox{\f $\dis\frac{\delta_2}{2}$}, \mbox{\f $\dis\frac{\delta_2}{2}$}\Big)^{d-1}. 
\end{align*}

\bigskip
\psfrag{0}{$0$}
\psfrag{l0}{$8 \cdot 2^{-\ell_0}$}
\psfrag{l1}{$8 \cdot 2^{-\ell_1}$}
\psfrag{ll1}{\tiny{$8 \cdot 2^{-\ell_1}$}}
\psfrag{l2}{$8 \cdot 2^{-\ell_2}$}
\psfrag{U0}{$U_0$}
\begin{center}
 \includegraphics[width=13cm]{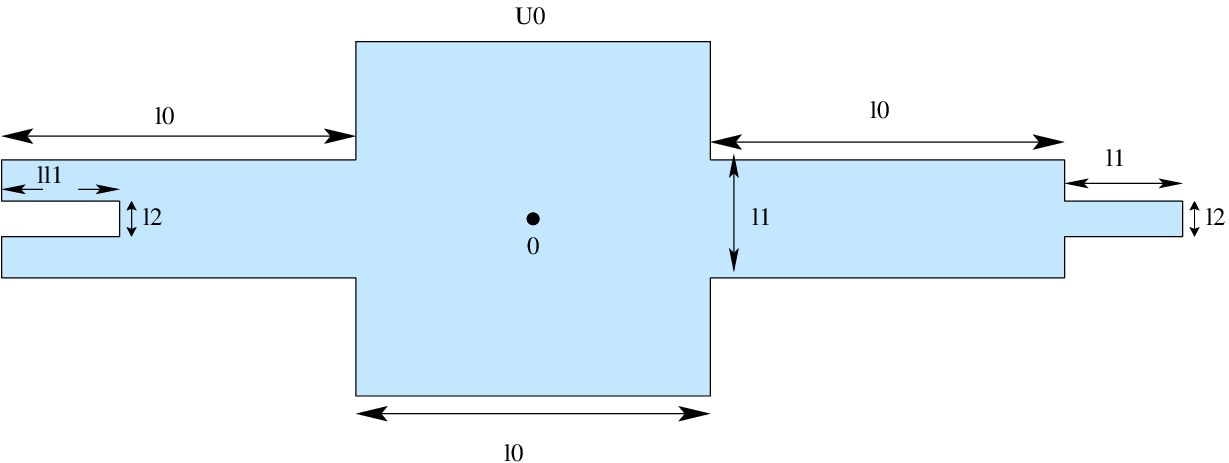}
\end{center}

\begin{center}
Fig.~3: An illustration of $U_0$
\end{center}

\bigskip
In this case similar calculations as in 1) show that the trajectory $w(t) = t e_1$, $t \ge 0$, first reaches the level set $\{\wt{\sigma}_{\ell_0} \in [\wt{\alpha}, 1 - \wt{\alpha}]\}$, then the level set $\{\wt{\sigma}_{\ell_1} \in [\wt{\alpha}, 1 - \wt{\alpha}]\}$, and then the level set $\{\wt{\sigma}_{\ell_2} \in [\wt{\alpha}, 1 - \wt{\alpha}]\}$. On the other hand, $\wt{w}(t) = -w(t)$ first crosses the level set $\{\wt{\sigma}_{\ell_0} \in [\wt{\alpha}, 1 - \wt{\alpha}]\}$, then the level set $\{\wt{\sigma}_{\ell_2} \in [\wt{\alpha}, 1 - \wt{\alpha}]\}$, and then the level set $\{\wt{\sigma}_{\ell_1} \in [\wt{\alpha}, 1 - \wt{\alpha}]\}$. This order remains the same for trajectories in small tubular neighborhoods around $w$ and $\wt{w}$. These tubular neighborhoods have positive measure for $P_0$ (the Wiener measure) and the order in which the level sets $\{\wt{\sigma}_\ell \in [\wt{\alpha}, 1 - \wt{\alpha}]\}$, $\ell \in \{\ell_0,\ell_1,\ell_2\}$ are crossed along a Brownian path is thus random. Similar considerations also apply when one instead considers the order in which the level sets $\{\wh{\sigma}_\ell = \frac{1}{2}\}$ are crossed (recall that $\wh{\sigma}_\ell = \wt{\sigma}_{\ell + 2}$, cf.~(\ref{1.3})). \hfill $\square$
\end{remark}

\section{Solidification of porous interfaces}

We now apply the results of the last section to the study of porous interfaces. The porous interfaces under consideration will be felt within a small distance denoted by $\ve$ from each point of $S = \partial U_0$, where $U_0 \in \cU_{\ell_*,A}$, cf.~(\ref{2.6}), with a strength measured by $\eta$ (we recall that $2^{-\ell_*}$ controls in a suitable sense the distance of $A$ to $U_1 = \IR^d \backslash U_0$, cf.~(\ref{2.6})). The porous interfaces will correspond to hard obstacles or to soft obstacles, and the main Theorem \ref{theo3.1} of this section provides uniform controls on the trapping probability of Brownian motion starting in $A$ by the porous interface when the ratio $\ve/2^{-\ell_*}$ goes to zero. We then derive an asymptotic lower bound on the capacity of the porous interface in Corollary \ref{cor3.4} that will play a crucial role in the next section. We also provide an application in the soft obstacle case in Corollary \ref{cor3.5}. In a heuristic fashion the ``interfaces'' $S = \partial U_0$ can be thought of as some sort of ``segmentation'' of the porous interfaces. One should emphasize that in the classes over which they vary, the interfaces and porous interfaces may undergo degenerations. In certain regions of space they may become brittle and have little trapping power, see for instance Remark \ref{rem3.6} 1). Throughout this section we assume $d \ge 3$.

\medskip
We first need some notation. We consider $U_0$ as in (\ref{1.1}), a non-empty bounded Borel subset of $\IR^d$ and $S = \partial U_0 = \partial U_1$ (with $U_1 = \IR^d \backslash U_0)$ as in (\ref{1.2}). In the hard obstacle case, given $\ve > 0$ and $\eta \in (0,1)$, the porous interfaces will vary in the class
\begin{equation}\label{3.1}
\begin{split}
\cS_{U_0,\ve,\eta}  = &\;\mbox{the class of compact subsets $\Sigma$ of $\IR^d$ such that}
\\
&\;\mbox{$P_x [H_\Sigma <  \tau_\ve] \ge \eta$, for all $x \in \partial U_0$},
\end{split}
\end{equation}

\medskip\n
with $H_\Sigma$ the entrance time of Brownian motion in $\Sigma$ and $\tau_\ve$ the first time it moves at sup-distance $\ve$ from its starting point, cf.~(\ref{1.20}). In the soft obstacle case, the porous interfaces will instead vary in the class
\begin{equation}\label{3.2}
\begin{split}
\cV_{U_0,\ve,\eta}  = &\;\mbox{the class of locally bounded measurable functions $V \ge 0$ on $\IR^d$}
\\
&\;\mbox{such that $E_x \big[\exp\big\{- \int_0^{\tau_\ve} V(X_s) \,ds\big\}\big] \le 1 - \eta$, for all $x \in \partial U_0$}.
\end{split}
\end{equation}

\medskip\n
The next theorem is the main result of this section. It provides in the limit $\ve/2^{-\ell_*}$ going to zero uniform controls on the killing of Brownian motion starting in $A$, when $U_0 \in \cU_{\ell_*,A}$ (with $\ell_* \ge 0$), in the presence of a porous interface corresponding to $\Sigma \in \cS_{U_0,\ve,\eta}$ in the hard obstacle case, or to $V \in \cV_{U_0,\ve,\eta}$ in the soft obstacle case.

\medskip
\begin{thm}\label{theo3.1} (Solidification of porous interfaces)

\medskip\n
Consider a non-empty compact subset $A$ of $\IR^d$ and $\eta \in (0,1)$. Then, in the hard obstacle case, one has
\begin{equation}\label{3.3}
\lim\limits_{u \r 0} \; \;\sup\limits_{\ve \le u \,2^{-\ell_*}} \;\; \sup\limits_{U_0 \in \cU_{\ell_*,A}} \;\; \sup\limits_{\Sigma \in \cS_{U_0,\ve,\eta}} \; \sup\limits_{x \in A} \;P_x[H_\Sigma = \infty] = 0,
\end{equation}

\medskip\n
and the expression under $\lim_{u \r 0}\, \sup_{\ve \le u \,2^{-\ell_*}}$ is maximal for the choice $A = \{0\}$.

\medskip
Likewise, in the soft obstacle case, one has
\begin{equation}\label{3.4}
\lim\limits_{u \r 0} \; \sup\limits_{\ve \le u \,2^{-\ell_*}} \; \sup\limits_{U_0 \in \cU_{\ell_*,A}} \; \sup\limits_{V \in \cV_{U_0,\ve,\eta}} \; \sup\limits_{x \in A} \;E_x\Big[\exp\Big\{ - \dis\int^\infty_0 V(X_s) \,ds\Big\}\Big] = 0,
\end{equation}

\n
and the expression under $\lim_{u \r 0}\, \sup_{\ve \le u \,2^{-\ell_*}}$ is maximal for the choice $A = \{0\}$.
\end{thm}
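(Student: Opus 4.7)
The plan is to reduce both statements to $x=0$, $A=\{0\}$ by translation invariance, and then combine Theorem~\ref{theo2.1} with an iterative trapping argument that exploits the $J$ balanced scales guaranteed at every point of the resonance set. Maximality at $A=\{0\}$ is immediate: for $x\in A$, $U_0\in\cU_{\ell_*,A}$ and $\Sigma\in\cS_{U_0,\ve,\eta}$ (respectively $V\in\cV_{U_0,\ve,\eta}$), the translates $U_0-x$, $\Sigma-x$, $V(\cdot+x)$ lie in $\cU_{\ell_*}$, $\cS_{U_0-x,\ve,\eta}$, $\cV_{U_0-x,\ve,\eta}$, and translation invariance of the Wiener measure turns $P_x[H_\Sigma=\infty]$ into $P_0[H_{\Sigma-x}=\infty]$ (and similarly for the Feynman--Kac expression in (\ref{3.4})).

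Assume now $x=0$ and $U_0\in\cU_{\ell_*}$. For parameters $J\ge 1$, $L\ge L(J)$, $I\ge 1$ to be chosen later, set ${\rm Res}={\rm Res}(U_0,\ell_*,J,L,I)$ and apply the strong Markov property at $H_{\rm Res}$:
\begin{equation*}
P_0[H_\Sigma=\infty]\le P_0[H_{\rm Res}=\infty]+\sup_{y\in{\rm Res}}P_y[H_\Sigma=\infty].
\end{equation*}
The first term is at most $\Phi_{J,I}$ and is made arbitrarily small by Theorem~\ref{theo2.1} for $I$ large. The remaining task is to control $P_y[H_\Sigma=\infty]$ uniformly in $y\in{\rm Res}$.

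Given $y\in{\rm Res}$, pick $J$ scales $\ell^{(1)}<\cdots<\ell^{(J)}$ in $\cA_*$ with $\wt\sigma_{\ell^{(j)}}(y)\in[\wt\alpha,1-\wt\alpha]$, and process them from the smallest spatial scale $2^{-\ell^{(J)}}$ up to the largest $2^{-\ell^{(1)}}$. A standard ball estimate for Brownian motion gives, for any $y'$ with $\wt\sigma_\ell(y')\in[\wt\alpha/2,1-\wt\alpha/2]$, a uniform lower bound $P_{y'}[H_S<\tau_{5\cdot 2^{-\ell}}]\ge p_0(\wt\alpha,d)>0$: both $U_0$ and $U_1$ occupy a definite fraction of $B(y',4\cdot 2^{-\ell})$, so any path reaching the minority side must cross $S=\partial U_0$. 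Conditional on this hit, the defining property of $\cS_{U_0,\ve,\eta}$ delivers an $\eta$ chance of absorption by $\Sigma$ within an additional $\tau_\ve$, so each attempt traps with probability at least $p_0\eta$ provided $\ve$ is much smaller than the current scale. The crucial observation is that a failed attempt at scale $\ell^{(j)}$ only displaces the motion by $O(2^{-\ell^{(j)}})$; summing over the already processed smaller scales yields, at the start of the next attempt, a displacement from $y$ of at most $10\cdot 2^{-\ell^{(j-1)}}$. By the Lipschitz estimate (\ref{1.7}) and the separation $\ell^{(j)}\ge\ell^{(j-1)}+L$, the perturbation of $\wt\sigma_{\ell^{(j-1)}}$ at the new position is bounded by $\tfrac{10d}{4}\,2^{-L}$, which stays below $\wt\alpha/2$ once $L$ is taken large enough (possibly enlarging $L(J)$, which only strengthens Theorem~\ref{theo2.1}). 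Iterating the strong Markov property over the $J$ scales yields
\begin{equation*}
P_y[H_\Sigma=\infty]\le(1-p_0\eta)^J.
\end{equation*}

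Given $\eta'>0$, choose $J$ with $(1-p_0\eta)^J<\eta'/2$, then $L$ large as above, then $I$ with $\Phi_{J,I}<\eta'/2$; the smallest scale used in the iteration is $\ge 2^{-\ell_*-(I+1)(J+1)L}$, so setting $u_0=c\,2^{-(I+1)(J+1)L}$ the argument is valid for every $\ve\le u\,2^{-\ell_*}$ with $u\le u_0$, proving (\ref{3.3}). For the soft case (\ref{3.4}) the same blueprint applies with the Feynman--Kac functional in place of the non-absorption indicator: strong Markov at $H_S$ combined with the defining inequality of $\cV_{U_0,\ve,\eta}$ produces a factor $\le 1-\eta$ on each successful hit, and $J$ iterations (valid by the same balance-preservation mechanism) give $E_y[\exp\{-\int_0^{T_*} V(X_s)\,ds\}]\le(1-p_0\eta)^J$ for a suitable stopping time $T_*$; the estimate extends to $\infty$ in place of $T_*$ since $V\ge 0$. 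The main obstacle is preserving balance at successive scales after failed attempts; it is resolved by ordering attempts from the smallest to the largest scale so that the displacement at each stage is dominated by the next (much larger) scale, together with the Lipschitz bound (\ref{1.7}) and a sufficiently large scale-separation $L$.
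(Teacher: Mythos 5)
Your proof is correct and follows the paper's overall strategy: reduce to $A=\{0\}$ by translation invariance, apply Theorem~\ref{theo2.1} through the strong Markov property at $H_{\rm Res}$, and obtain trapping by iterating a ball estimate at the $J$ balanced scales of the resonance point. Where you differ is in the iteration mechanism. The paper's Lemma~\ref{lem3.2} is formulated so that the density-balance condition $\wt{\sigma}_\ell(x_0)\in[\wt{\alpha},1-\wt{\alpha}]$ is evaluated at the \emph{fixed} resonance point $x_0$, while the current position $y$ only needs to satisfy $|y-x_0|_\infty\le\frac{1}{4}\,2^{-\ell}$; the iteration then runs through nested concentric balls $\mathring{B}(x_0,5\cdot 2^{-\ell})$, $\ell\in\cA_*$, and since successive exit radii shrink by a factor $2^{-L}\le 2^{-5}$, the condition $|y-x_0|_\infty\le\frac{1}{4}\,2^{-\ell}$ is automatic — no re-verification of balance is needed. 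You instead re-center at each step and re-certify the balance at the displaced position via the Lipschitz estimate~(\ref{1.7}), which forces you to take $L$ large (depending on $d$) so that the accumulated perturbation $\frac{10d}{4}\,2^{-L}$ stays below $\wt{\alpha}/2$. This is legitimate because $\Phi_{J,I}$ takes a supremum over all $L\ge L(J)$, so one may fix any admissible $L\ge\max(L(J),c(d))$ in the construction of ${\rm Res}$; your phrase ``possibly enlarging $L(J)$, which only strengthens Theorem~\ref{theo2.1}'' is a confusing way to state this and should be read as ``choose a specific $L$ in the admissible range.'' The paper's route is marginally cleaner since it needs no dimension-dependent lower bound on $L$ beyond the $L\ge 5$ built into (\ref{1.27}); yours is a valid alternative that makes the role of the scale separation more visible. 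One small imprecision in the soft-obstacle paragraph: ``a factor $\le 1-\eta$ on each successful hit'' conflates the hitting probability $p_0$ of $S$ with the Feynman--Kac discount $\eta$; the correct per-attempt bound is $E_{y'}\big[\exp\big\{-\int_0^{T_{\mathring{B}(y',5\cdot 2^{-\ell})}}V\,ds\big\}\big]\le 1-p_0\eta$ (cf.~Lemma~\ref{lem3.3}), which is what your final $(1-p_0\eta)^J$ reflects.
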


(When $A = \{0\}$, scaling can also be applied to reformulate (\ref{3.3}) and (\ref{3.4}), see Remark \ref{rem3.6} 2).)

\begin{proof}
Note that in the hard obstacle case, cf.~(\ref{3.1}), when $x \in A$, $U_0 \in \cU_{\ell_*,A}$, $\Sigma \in \cS_{U_0,\ve,A}$, then $U_0 - x \in \cU_{\ell_*}$ ($= \cU_{\ell_*,\{0\}}$, see (\ref{2.7}) for notation), $\Sigma - x \in \cS_{U_0 - x,\ve,\eta}$ and $P_x[H_\Sigma = \infty] = P_0[H_{\Sigma - x} = \infty]$, so the maximality statement for the case $A = \{0\}$ stated below (\ref{3.3}) follows. In the soft obstacles case, cf.~(\ref{3.2}), when $x \in A$, $U_0 \in \cU_{\ell_*,A}$ as above, and $V \in \cV_{U_0,\ve,\eta}$, then $V( \cdot + x) \in \cV_{U_0 - x,\ve,\eta}$ and 
\begin{equation*}
E_x \Big[\exp\Big\{- \dis\int^\infty_0 V(X_s) \,ds\Big\}\Big] = E_0 \Big[\exp\Big\{- \dis\int^\infty_0 V(X_s + x) \,ds\Big\}\Big],
\end{equation*}

\n
whence the maximality of the case $A = \{0\}$ stated below (\ref{3.4}).

\medskip
We now prove (\ref{3.3}) in the case $A = \{0\}$ (the general case follows by the maximality property explained above). The following lemma will be useful (we recall (\ref{1.3}), (\ref{1.35}) and above (\ref{1.1}) for notation). Incidentally, the Reader may possibly first skip its proof and proceed above (\ref{3.8}) to see how the proof of (\ref{3.3}) follows.

\begin{lem}\label{lem3.2}
For $\Sigma \in \cS_{U_0,\ve,\eta}$, $\ell \ge 0$ with $\ve \le \frac{1}{4} \;2^{-\ell}$, $x_0$, $y \in \IR^d$ such that $\wt{\sigma}_\ell (x_0) \in [\wt{\alpha}, 1 - \wt{\alpha}]$ and $|y - x_0|_\infty \le \frac{1}{4} \;2^{-\ell}$, one has
\begin{equation}\label{3.5}
P_y[H_\Sigma < T_{\mathring{B}(x_0,5 \cdot 2^{-\ell})}] \ge c_3 (\eta) \;(> 0)\,.
\end{equation}
\end{lem}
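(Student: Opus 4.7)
The plan is to combine two ingredients. First, a potential-theoretic estimate showing that Brownian motion from $y$ hits the segmentation $S$ inside a slightly shrunken ball with probability at least some dimensional constant $c(\wt{\alpha},d) > 0$; second, the defining property of $\cS_{U_{0},\ve,\eta}$ applied, via the strong Markov property, at the first hitting time of $S$.

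For the first step, the hypothesis $\wt{\sigma}_\ell(x_0) \in [\wt{\alpha}, 1-\wt{\alpha}]$ gives, using the very definition (\ref{1.3}) of $\wt{\sigma}_\ell$, that both $U_0$ and $U_1$ occupy at least a fraction $\wt{\alpha}$ of $B(x_0, 4\cdot 2^{-\ell})$. Since $B(x_0, 4 \cdot 2^{-\ell})$ is contained in $\mathring{B}(x_0, 4.75 \cdot 2^{-\ell})$ at sup-distance $\frac{3}{4}\cdot 2^{-\ell}$ from its boundary, a standard occupation-time estimate for Brownian motion killed on exiting $\mathring{B}(x_0, 4.75 \cdot 2^{-\ell})$ (of the same flavor as those invoked in the proof of Proposition \ref{prop1.3}) yields after translation and scaling, uniformly in $y$ with $|y-x_0|_\infty \le \frac{1}{4}\cdot 2^{-\ell}$,
\begin{equation*}
P_y\bigl[H_{U_i \cap B(x_0, 4\cdot 2^{-\ell})} < T_{\mathring{B}(x_0, 4.75\cdot 2^{-\ell})}\bigr] \ge c(\wt{\alpha},d),\quad i=0,1.
\end{equation*}
Since $\IR^d \setminus S = \mathring{U}_0 \sqcup \mathring{U}_1$, either $y \in S$, in which case $H_S = 0$, or $y \in \mathring{U}_j$ for some $j \in \{0,1\}$, and by continuity of Brownian paths any trajectory that reaches $U_{1-j} \cap B(x_0, 4\cdot 2^{-\ell})$ has first crossed $S$. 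Thus in all cases $P_y[H_S < T_{\mathring{B}(x_0, 4.75\cdot 2^{-\ell})}] \ge c(\wt{\alpha},d)$.

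For the second step, apply the strong Markov property at the stopping time $H_S$. On the favorable event above one has $X_{H_S} \in S = \partial U_0$ with $|X_{H_S} - x_0|_\infty < 4.75 \cdot 2^{-\ell}$, so by (\ref{3.1}) the conditional probability that $H_\Sigma \circ \theta_{H_S} < \tau_\ve \circ \theta_{H_S}$ is at least $\eta$. On this event, for every $s \in [0, H_\Sigma \circ \theta_{H_S}]$ we have $|X_{H_S+s} - X_{H_S}|_\infty < \ve \le \frac{1}{4}\cdot 2^{-\ell}$, and combining with the bound on $|X_{H_S} - x_0|_\infty$ the triangle inequality yields $|X_{H_S+s} - x_0|_\infty < 5 \cdot 2^{-\ell}$. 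Together with the fact that the path stays in $\mathring{B}(x_0, 4.75 \cdot 2^{-\ell}) \subset \mathring{B}(x_0, 5 \cdot 2^{-\ell})$ up to time $H_S$, this shows $H_\Sigma < T_{\mathring{B}(x_0, 5 \cdot 2^{-\ell})}$, so we conclude $P_y[H_\Sigma < T_{\mathring{B}(x_0, 5\cdot 2^{-\ell})}] \ge c(\wt{\alpha}, d)\,\eta =: c_3(\eta)$.

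The only real content is the first step, namely the uniform Green-function / occupation-time lower bound; this is routine potential theory for Brownian motion in a fixed cube with a target set of prescribed positive volume fraction, so I do not anticipate a genuine obstacle, only bookkeeping of the constants to ensure that the hitting of $S$ occurs strictly inside $\mathring{B}(x_0, (5 - \ve \cdot 2^{\ell})\cdot 2^{-\ell})$, as required to accommodate the subsequent $\ve$-displacement when the motion is absorbed by $\Sigma$.
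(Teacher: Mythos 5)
Your proof is correct and follows essentially the same two-step architecture as the paper's: first, a potential-theoretic lower bound for $P_y[H_S < T_{\mathring{B}(x_0, r\,2^{-\ell})}]$ with some $r < 5$, then the strong Markov property at $H_S$ combined with the defining property (\ref{3.1}) of $\cS_{U_0,\ve,\eta}$ to convert hitting $S$ into hitting $\Sigma$ inside $\mathring{B}(x_0, 5\cdot 2^{-\ell})$. The only difference is cosmetic: the paper gets the hit of $S$ by showing, via the Dirichlet heat kernel, that with probability $\ge c$ the path is in $U_0$ at time $4^{-\ell}$ and in $U_1$ at time $2\cdot 4^{-\ell}$ while staying in $\mathring B(x_0, \tfrac92\,2^{-\ell})$, whereas you argue via a Green-function/occupation-time bound that the path hits the target Borel set $U_{1-j}$ (case-splitting on whether $y\in S$ or $y\in\mathring U_j$). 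Both then invoke the same topological fact that a continuous path encountering both $U_0$ and $U_1$ must meet $S=\ov U_0\cap\ov U_1$. The fixed-time formulation of the paper quietly sidesteps any discussion of entrance times of general Borel sets and the case split, which is why it is slightly leaner, but your version is sound: the debut of a Borel set is a stopping time and positive expected occupation time plus a second-moment bound gives the positive hitting probability you assert.
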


\begin{proof}
Note that $|y - x_0|_\infty \le \frac{1}{4} \;2^{-\ell}$ and $\wt{\sigma}_\ell(x_0) = |B(x_0, 4 \cdot 2^{-\ell}) \cap U_1 | / |B(x_0, 4 \cdot 2^{-\ell})| \in [\wt{\alpha}, 1 - \wt{\alpha}]$, so that using classical properties of the Dirichlet heat kernel, see for instance \cite{Szni98a}, p.~13,~18, as well as scaling and translation invariance, we find that
\begin{equation*}
P_y [X_{4^{-\ell}} \in U_0, \;X_{2 \cdot 4^{-\ell}} \in U_1 \;\;\mbox{and}\;\;  2\cdot 4^{-\ell} < T_{\mathring{B}(x_0, \frac{9}{2} \cdot 2^{-\ell})}] \ge c.
\end{equation*}

\medskip\n
The event under the probability above is contained in $\{H_S < T_{\mathring{B}(x_0,\frac{9}{2} \cdot 2^{-\ell})}\}$ (indeed the continuous trajectory $X_\point$ encounters $U_0$ und $U_1$ during the time interval $[0,2 \cdot 4^{-\ell}]$, and hence meets $S = \ov{U}_0 \cap \ov{U}_1$ during the same time interval). So we have
\begin{equation}\label{3.6}
P_y [H_S < T_{\mathring{B}(x_0, \frac{9}{2} \cdot 2^{-\ell})}] \ge c.
\end{equation}
Then, by the strong Markov property and (\ref{3.1}), we see that since $\ve \le \frac{1}{4} \;2^{-\ell}$,
\begin{equation}\label{3.7}
\begin{array}{l}
P_y[H_\Sigma \circ \theta_{H_S} + H_S < T_{\mathring{B}(x_0, 5 \cdot 2^{-\ell})}]  \ge 
\\
E_y\big[H_S < T_{\mathring{B}(x_0, \frac{9}{2} \cdot 2^{-\ell})}, P_{X_{H_S}}[H_\Sigma < \tau_\ve]\big] \stackrel{(\ref{3.1}),(\ref{3.6})}{\ge}   c\, \eta \stackrel{\rm def}{=} c_3(\eta).
\end{array}
\end{equation}
This completes the proof of Lemma \ref{lem3.2}. 
\end{proof}

\medskip
We can now resume the proof of (\ref{3.3}) (when $A = \{0\}$). We pick $J \ge 1$, $L \ge L(J)$ (see (\ref{1.27})), and for $I \ge 1$, $\ell_* \ge 0$ and $U_0 \in \cU_{\ell_*}$, we write Res for the resonance set Res$(U_0,\ell_*,J,L,I)$, see (\ref{2.5}). We recall the notation $\cA_*$ from (\ref{2.3}), so $\cA_* \subseteq L\IN$ satisfies
\begin{equation}\label{3.8}
\begin{split}
|\cA_*| & = I(J + 1), \;\min \cA_* \ge \ell_*, \;\max \cA_* \le \ell_* + (I+1) (J+1)\,L, \;\mbox{and}
\\
{\rm Res} & = \{x \in \IR^d, \dsl_{\ell \in \cA_*} 1\{\wt{\sigma}_\ell(x) \in [\wt{\alpha},1 - \wt{\alpha}]\} \ge J\}.
\end{split}
\end{equation}

\n
We then consider $u > 0$ and $0 \le \ve \le u \,2^{-\ell_*}$. As we now explain:
\begin{equation}\label{3.9}
\begin{array}{l}
\mbox{when $u < \mbox{\f $\dis\frac{1}{4}$} \;2^{-(I + 1) (J+1)\,L}$, then for any $x_0 \in {\rm Res}$, $\Sigma \in \cS_{U_0,\ve,\eta}$, one has}
\\
P_{x_0}[H_\Sigma > T_{\mathring{B}(x_0, 5 \cdot 2^{- \min \cA_*})}] \le \big(1 - c_3 (\eta)\big)^J.
\end{array}
\end{equation}
 
\n
Indeed, one has $\ve \le u\,2^{-\ell_*} \le  \frac{1}{4}\; 2^{-\max \cA_*}$ by (\ref{3.8}). One applies the strong Markov property at the successive times of exit of the balls 
 $\mathring{B} (x_0, 5 \cdot 2^{-\ell})$, when $\ell \in \cA_*$, and notes that when $\ell' > \ell$ in $\cA_*$, then $\ell' \ge \ell + L$ $(\ge \ell + 5$, see (\ref{1.27})) so that $5 \cdot 2^{-\ell'} \le \frac{1}{4} \cdot 2^{-\ell}$. One can then repeatedly apply (\ref{3.5}) and obtain that  \begin{equation}\label{3.10}
 P_{x_0} [H_\Sigma > T_{\mathring{B}(x_0, 5 \cdot 2^{- \min \cA_*})}] \le \big(1 - c_3(\eta)\big)^{\sum\limits_{\ell \in \cA_*} 1 \{\wt{\sigma}_\ell (x_0) \in [\wt{\alpha}, 1 - \wt{\alpha}]\}},
 \end{equation}
and (\ref{3.9}) follows since $x_0 \in {\rm Res}$.

\medskip
Thus, for $u < \frac{1}{4} \;2^{-(I+1)(J+1)L}$ and $\ve \le u \,2^{-\ell_*}$, we find that for $U_0 \in \cU_{\ell_*}$ and $\Sigma \in \cS_{U_0, \ve,\eta}$
\begin{equation}\label{3.11}
\begin{array}{l}
P_0[H_\Sigma = \infty] \le P_0[H_{\rm Res} = \infty] + E_0\big[H_{\rm Res} < \infty, P_{X_{H_{\rm Res}}} [H_\Sigma = \infty]\big] \stackrel{(\ref{3.9})}{\le}
\\
P_0[H_{\rm Res} = \infty] + \big(1 - c_3(\eta)\big)^J \stackrel{(\ref{2.8})}{\le} \Phi_{J,I} + \big(1 - c_3(\eta)\big)^J.
\end{array}
\end{equation} 

\medskip\n
If we now take the supremum over $\Sigma \in \cS_{U_0, \ve,\eta}$, $U_0 \in \cU_{\ell_*}$ and $\ve \le u\,2^{-\ell_*}$, we find after letting $u$ tend to zero that
\begin{equation}\label{3.12}
\limsup\limits_{u \r 0} \; \sup\limits_{\ve \le u\,2^{-\ell_*}} \; \sup\limits_{U_0 \in \cU_{\ell_*}} \; \sup\limits_{\Sigma \in \cS_{U_0,\ve,\eta}} P_0 [H_\Sigma = \infty] \le \Phi_{J,I} + \big(1 - c_3(\eta)\big)^J\,.
\end{equation}

\medskip\n
Letting $I$ tend to infinity, the first term in the right member of (\ref{3.12}) goes to zero by (\ref{2.10}) of Theorem \ref{theo2.1}. We can then let $J$ tend to infinity and obtain (\ref{3.3}) when $A = \{0\}$ (and hence in the general case).

\medskip
Let us briefly explain how one obtains (\ref{3.4}). With analogous arguments as for Lemma \ref{lem3.2}, one shows instead with the help of (\ref{3.2}):

\begin{lem}\label{lem3.3}
For $V \in \cV_{U_0,\ve,\eta}$, $\ell \ge 0$ with $\ve \le \frac{1}{4} \;2^{-\ell}$, $x_0,y \in \IR^d$ such that $\wt{\sigma}_\ell(x_0) \in [\wt{\alpha},1 - \wt{\alpha}]$ and $|y - x_0|_\infty \le \frac{1}{4}   \cdot2^{-\ell}$, one has
\begin{equation}\label{3.13}
E_y \Big[\exp\Big\{ - \mbox{\f $\dis\int_0^{T_{\mathring{B}(x_0, 5 \cdot 2^{- \ell})}}$}V(X_s) \,ds\Big\}\Big] \le 1 - c_4(\eta).
\end{equation} 
\end{lem}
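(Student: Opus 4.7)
The plan is to mirror the argument of Lemma \ref{lem3.2}, simply replacing the hard-obstacle estimate (\ref{3.1}) by the soft-obstacle estimate (\ref{3.2}) at the very last step. The first ingredient is a lower bound, derived exactly as in (\ref{3.6}), that captures the fact that $\wt{\sigma}_\ell(x_0) \in [\wt{\alpha}, 1-\wt{\alpha}]$ forces $U_0$ and $U_1$ each to occupy a non-degenerate volume fraction of $B(x_0, 4 \cdot 2^{-\ell})$: using the Dirichlet heat kernel, scaling and translation invariance, together with the fact that a continuous trajectory that visits both $U_0$ and $U_1$ must meet $S = \ov{U}_0 \cap \ov{U}_1$, one obtains a dimensional constant $c > 0$ such that, writing $T' = T_{\mathring{B}(x_0, \frac{9}{2} \cdot 2^{-\ell})}$ and $T_U = T_{\mathring{B}(x_0, 5 \cdot 2^{-\ell})}$,
\begin{equation*}
P_y[H_S < T'] \ge c.
\end{equation*}

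Since $T' \le T_U$ and $\exp\{-\int_0^{T_U} V(X_s)\,ds\} \le 1$, splitting according to whether $H_S \ge T'$ or $H_S < T'$ gives
\begin{equation*}
E_y\Big[\exp\Big\{-\mbox{\f $\dis\int_0^{T_U}$} V(X_s)\,ds\Big\}\Big] \le P_y[H_S \ge T'] + E_y\Big[1_{\{H_S < T'\}} \exp\Big\{-\mbox{\f $\dis\int_0^{T_U}$} V(X_s)\,ds\Big\}\Big].
\end{equation*}
On $\{H_S < T'\}$, the continuity of $X$ gives $|X_{H_S} - x_0|_\infty \le \frac{9}{2} \cdot 2^{-\ell}$, and the assumption $\ve \le \frac{1}{4} \cdot 2^{-\ell}$ then ensures $H_S + \tau_\ve \circ \theta_{H_S} \le T_U$. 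Since $V \ge 0$, this yields $\exp\{-\int_0^{T_U} V\} \le \exp\{-\int_0^{H_S + \tau_\ve \circ \theta_{H_S}} V\}$ on this event; applying the strong Markov property at $H_S$ and invoking (\ref{3.2}) at the point $X_{H_S} \in S$ gives
\begin{equation*}
E_y\Big[1_{\{H_S < T'\}} \exp\Big\{-\mbox{\f $\dis\int_0^{T_U}$} V(X_s)\,ds\Big\}\Big] \le (1-\eta)\,P_y[H_S < T'].
\end{equation*}

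Combining the two bounds above and using $P_y[H_S \ge T'] = 1 - P_y[H_S < T']$ one obtains
\begin{equation*}
E_y\Big[\exp\Big\{-\mbox{\f $\dis\int_0^{T_U}$} V(X_s)\,ds\Big\}\Big] \le 1 - \eta\, P_y[H_S < T'] \le 1 - c\eta,
\end{equation*}
which proves (\ref{3.13}) with $c_4(\eta) = c\eta$. There is no genuine obstacle: the only technical point to keep track of is the choice of the intermediate radius $\frac{9}{2} \cdot 2^{-\ell}$, which must leave enough room for a subsequent displacement of size $\ve$ to still fit inside the ball of radius $5 \cdot 2^{-\ell}$, so that the soft-obstacle bound (\ref{3.2}) can be applied over $[H_S, H_S + \tau_\ve \circ \theta_{H_S}] \subseteq [0, T_U]$.
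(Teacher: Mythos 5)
Your proof is correct and is exactly the argument the paper has in mind: the paper does not spell out a proof of Lemma \ref{lem3.3}, saying only that it follows ``with analogous arguments as for Lemma \ref{lem3.2}'' using (\ref{3.2}) in place of (\ref{3.1}), and your write-up supplies precisely those details. The key adaptations — bounding $\exp\{-\int_0^{T_U}V\}$ on $\{H_S < T'\}$ by $\exp\{-\int_{H_S}^{H_S+\tau_\ve\circ\theta_{H_S}}V\}$, checking $H_S+\tau_\ve\circ\theta_{H_S}\le T_U$ via $\frac{9}{2}\cdot2^{-\ell}+\ve < 5\cdot 2^{-\ell}$, and then invoking (\ref{3.2}) at $X_{H_S}\in S$ through the strong Markov property — are all sound and match the hard-obstacle argument step for step.
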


From this lemma and the strong Markov property, one deduces in place of (\ref{3.9}) that when $u < \frac{1}{4}\;2^{-(I+1)(J+1)L}$, and $\ve \le u \,2^{-\ell_*}$, then for any $U_0 \in \cU_{\ell_*}$, $x_0 \in {\rm Res}$, $V \in \cV_{U_0,\ve,\eta}$, one has
\begin{equation}\label{3.14}
E_{x_0} \Big[\exp\Big\{ - \mbox{\f $\dis\int_0^{\tau_{5 \cdot 2^{-\min \cA_*}}}$} V(X_s) \,ds\Big\}\Big] \le \big(1 - c_4(\eta)\big)^J.
\end{equation} 

\n
One then concludes as below (\ref{3.11}). This completes the proof of Theorem \ref{theo3.1}.
\end{proof}

We can now state a corollary of Theorem \ref{theo3.1} that will play an important role in the next section in our treatment of certain disconnection problems for random interlacements and the simple random walk on $\IZ^d$. We denote by ${\rm cap}(F)$ the Brownian capacity of a compact subset or a bounded open subset $F$ of $\IR^d$, cf.~\cite{PortSton78}, p.~57,~58.

\begin{cor}\label{cor3.4} (Capacity lower bound)

\medskip\n
Consider a compact subset $A$ of $\IR^d$ with positive capacity and $\eta \in (0,1)$. Then, one has
\begin{equation}\label{3.15}
\lim_{u \r 0} \;\; \inf\limits_{\ve \le u \,2^{-\ell_*}} \;\;\inf\limits_{U_0 \in \cU_{\ell_*,A}} \;\; 
\inf\limits_{\Sigma \in \cS_{U_0,\ve,\eta}}\, {\rm cap}(\Sigma) / {\rm cap}(A) = 1.
\end{equation} 

\n
Moreover, for any $\eta \in (0,1)$ one has
\begin{equation}\label{3.15a}
\lim_{u \r 0} \;\; \inf\limits_{\ve \le u \,2^{-\ell_*}} \;\;\inf\limits_{A} \;\;\inf\limits_{U_0 \in \cU_{\ell_*,A}} \;\; 
\inf\limits_{\Sigma \in \cS_{U_0,\ve,\eta}}\, {\rm cap}(\Sigma) / {\rm cap}(A) = 1
\end{equation} 
($A$ varies over the collection of compact sets of positive capacity in the infimum).
\end{cor}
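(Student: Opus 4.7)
The plan is to derive both (\ref{3.15}) and (\ref{3.15a}) from the uniform solidification estimate (\ref{3.3}) of Theorem \ref{theo3.1}, combined with the strong Markov property and the classical asymptotic
\begin{equation*}
P_x[H_K < \infty] \, \sim \, c_d \, {\rm cap}(K) \, |x|^{-(d-2)} \quad \mbox{as } |x| \to \infty,
\end{equation*}
valid for any compact $K \subseteq \IR^d$ of positive capacity when $d \ge 3$. In this way the capacity ratio ${\rm cap}(\Sigma)/{\rm cap}(A)$ is read off at infinity from a pointwise comparison of the hitting probabilities of $\Sigma$ and $A$.

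For the lower bound ``$\liminf \ge 1$'', the key observation is that applying the strong Markov property at $H_A$ yields, for every $x \in \IR^d$,
\begin{equation*}
P_x[H_\Sigma < \infty] \, \ge \, \bigl(\inf\nolimits_{y \in A} P_y[H_\Sigma < \infty]\bigr) \, P_x[H_A < \infty].
\end{equation*}
By (\ref{3.3}), for any $\delta > 0$ one has $\inf_{y \in A} P_y[H_\Sigma < \infty] \ge 1 - \delta$ uniformly over $\ve \le u\,2^{-\ell_*}$, $U_0 \in \cU_{\ell_*,A}$, $\Sigma \in \cS_{U_0,\ve,\eta}$, as soon as $u$ is small enough (depending only on $\delta$, $A$, $\eta$). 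Multiplying both sides by $|x|^{d-2}$ and letting $|x| \to \infty$, the displayed asymptotic gives ${\rm cap}(\Sigma) \ge (1-\delta)\,{\rm cap}(A)$. This bound is uniform in the quadruple $(\ell_*,\ve,U_0,\Sigma)$, whence $\liminf_{u\to 0}\inf \ge 1-\delta$, and since $\delta > 0$ was arbitrary the liminf equals at least $1$.

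For the matching upper bound ``$\limsup \le 1$'', it is enough to exhibit, for each $u > 0$ and $\delta > 0$, one admissible quadruple with ratio $\le 1+\delta$. Take $\ell_*$ large, $U_0 = A + \mathring{B}(0, 2\cdot 2^{-\ell_*})$ (a bounded open neighborhood of $A$) and $\Sigma = S = \partial U_0$ (compact). For every $x \in A$ and $\ell \ge \ell_*$ one has $B(x, 2^{-\ell}) \subseteq U_0$, so $\wh{\sigma}_\ell(x) = 0$ and hence $U_0 \in \cU_{\ell_*,A}$; moreover $P_x[H_\Sigma < \tau_\ve] = 1 \ge \eta$ automatically for $x \in \partial U_0$ and any $\ve > 0$, so $\Sigma \in \cS_{U_0,\ve,\eta}$. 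Since $\overline{U_0}$ decreases to $A$ as $\ell_* \to \infty$, continuity of Brownian capacity from above on decreasing sequences of compact sets gives ${\rm cap}(\overline{U_0}) \to {\rm cap}(A)$, and ${\rm cap}(\Sigma) \le {\rm cap}(\overline{U_0}) \le (1+\delta)\,{\rm cap}(A)$ for $\ell_*$ sufficiently large. Any $\ve \in (0, u\,2^{-\ell_*}]$ then provides an admissible quadruple, proving $\limsup_{u\to 0}\inf \le 1$; combined with the previous step this gives (\ref{3.15}).

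For the strengthened statement (\ref{3.15a}) the two steps must be uniform in $A$. The lower bound step only uses $\inf_{y \in A} P_y[H_\Sigma < \infty] \ge 1-\delta$, and by the maximality statement in Theorem \ref{theo3.1} (the sup in (\ref{3.3}) is largest when $A = \{0\}$) a single $u = u(\delta,\eta)$, independent of $A$, suffices for every nonempty compact $A$ of positive capacity. The matching upper bound for (\ref{3.15a}) follows trivially from (\ref{3.15}) applied to one specific $A$ (say a ball of radius $1$). The only delicate point—and essentially the only obstacle—is to make the maximality argument of Theorem \ref{theo3.1} deliver this $A$-uniformity cleanly; once that is recognized, (\ref{3.15}) and (\ref{3.15a}) drop out of Theorem \ref{theo3.1} together with the strong Markov property and the elementary capacity asymptotic at infinity.
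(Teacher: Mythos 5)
Your proof is correct and follows essentially the same strategy as the paper: establish the key comparison ${\rm cap}(\Sigma) \ge \bigl(\inf_{x\in A}P_x[H_\Sigma<\infty]\bigr)\,{\rm cap}(A)$, apply Theorem~\ref{theo3.1} (and its $A=\{0\}$ maximality for (\ref{3.15a})), then match with an explicit admissible family whose capacities shrink to ${\rm cap}(A)$. The one genuine difference is in how the comparison inequality is obtained: the paper derives it by integrating the equilibrium potentials against equilibrium measures and exploiting the symmetry of the Green function (see (\ref{3.17})), whereas you use the strong Markov property at $H_A$ and read the ratio of capacities off the $|x|^{-(d-2)}$ asymptotics of $P_x[H_\Sigma<\infty]$ and $P_x[H_A<\infty]$ at infinity. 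Both are classical potential-theoretic arguments yielding the identical inequality; your version is a touch more ``probabilistic at infinity,'' the paper's a touch more algebraic. For the upper bound you use $\Sigma=\partial U_0$ with $U_0=A+\mathring B(0,2\cdot2^{-\ell_*})$ while the paper takes $\Sigma=U_0=A_{\ell_*}$ (the solid $2^{-\ell_*}$-neighborhood of $A$); both are admissible quadruples whose capacities decrease to ${\rm cap}(A)$ by continuity of capacity along decreasing compacts.
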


\begin{proof}
We begin with (\ref{3.15}). Note that the quantity under the $\lim_{u \r 0}$ is non-increasing in $u$, so the limit exists. We first prove that it is at least $1$. Denote by $g(\cdot, \cdot)$ the Brownian Green function on $\IR^d$ (which is known to be symmetric), and for $C$ compact subset of $\IR^d$, let $e_C$ and $h_C$ respectively stand for the equilibrium measure and the equilibrium potential of $C$. Then (see for instance \cite{PortSton78}, p.~58 or Chapter 2 \S 3 and \S 4 of \cite{Szni98a}), one has
\begin{equation}\label{3.16}
h_C(x) = P_x[\wt{H}_C < \infty] = \dis\int g(x,y) \,e_C(dy), \; \mbox{for all $x \in \IR^d$},
\end{equation} 

\n
and $h_C = 1$ on $C$ except on a set of zero capacity. Moreover, $e_C$ is supported by $C$, has total mass ${\rm cap}(C)$, and does not charge sets of zero capacity.

\medskip
Now for $A$ as above (\ref{3.15}), for $u >0$, $\ell_* \ge 0$, $0 < \ve \le u\, 2^{-\ell_*}$, $U_0 \in \cU_{\ell_*,A}$, and $\Sigma \in \cS_{U_0,\ve,\eta}$, we have
\begin{equation}\label{3.17}
\begin{array}{lcl}
{\rm cap}(\Sigma) &\!\!\!\!\!\! \ge &\!\!\!\!\!\! \dis\int h_A(y) \;e_\Sigma(dy) \underset{\rm symmetry}{\stackrel{(\ref{3.16})}{=}} \dis\iint g(x,y) \,e_A(dx) \, e_\Sigma(dy)
\\[2ex]
&\!\!\!\!\!\!  \stackrel{(\ref{3.16})}{=}  &\!\!\!\!\!\! \dis\int h_\Sigma(x) \,e_A(dx) = \dis\int P_x [H_\Sigma < \infty] \,e_A(dx)
\\[2ex]
&\!\!\!\!\!\! \ge & \!\!\!\!\!\! \inf\limits_{x \in A} P_x [H_\Sigma < \infty] \;{\rm cap}(A),
\end{array}
\end{equation} 

\n
where the second equality in the second line of (\ref{3.17}) follows from the fact that the set of irregular points $x$ for which $P_x[\wt{H}_\Sigma < \infty] \not= P_x[H_\Sigma < \infty]$, has zero capacity and hence null $e_A$-measure.

\medskip
Thus, ${\rm cap}(\Sigma)/{\rm cap}(A) \ge \inf_{x \in A} \, P_x[H_\Sigma < \infty]$, and taking the infimum in the right member over $\Sigma \in \cS_{U_0,\ve,\eta}$, $U_0 \in \cU_{\ell_*,A}$, $\ve \le u\, 2^{-\ell_*}$, and letting $u$ tend to zero, we see that the limit in (\ref{3.15}) is at least $1$ by (\ref{3.3}) of Theorem \ref{theo3.1}.

\medskip
To show that it equals $1$, denote by $A_{\ell_*}$ the set of points at sup-distance at most $2^{-\ell_*}$ from $A$. Then for $\ve \le u\,2^{-\ell_*}$, choosing $U_0 = A_{\ell_*} = \Sigma$, we see that $U_0 \in \cU_{\ell_*,A}$ and $\Sigma \in \cS_{U_0,\ve,\eta}$. In addition, ${\rm cap}(A_{\ell_*}) \downarrow {\rm cap}(A)$ as $\ell_* \r \infty$, cf.~\cite{PortSton78}, p.~60. Letting $\ve$ go to $0$ and $\ell_*$ to $\infty$ in such a way that $\ve\, 2^{\ell_*}$ tends to $0$, the limit in (\ref{3.15}) is at most $1$. 

\medskip
We now turn to the proof of (\ref{3.15a}). A similar monotonicity argument as in the proof of (\ref{3.15}) shows the existence of the limit. This limit is at most equal to the limit in (\ref{3.15}) (with $A$ an arbitrary compact set of positive capacity) and thus is at most $1$. The limit is also bigger or equal to $1$ as we now explain. Indeed, as above one has ${\rm cap}(\Sigma)/{\rm cap}(A) \ge \inf_{x \in A} \, P_x[H_\Sigma < \infty]$, and it now follows from (\ref{3.3}) and the maximality of the case  $A= \{0\}$ that the limit in (\ref{3.15a}) is at least $1$. This concludes the proof of (\ref{3.15a}), and hence of Corollary \ref{cor3.4}.
\end{proof}

We also state an immediate consequence of Theorem \ref{theo3.1} in the case of soft obstacles concerning the time spent by Brownian motion in the $\ve$-neighborhood of $S = (\partial U_0 = \partial U_1)$, for any $U_0 \in \cU_{\ell_*,A}$. We recall that $d(x,S)$ stands for the sup-distance of $x$ to $S$ (see the beginning of Section 1).

\begin{cor}\label{cor3.5}
For any non-empty compact set $A$ in $\IR^d$, $\ell_* \ge 0$ and $a > 0$,
\begin{equation}\label{3.18}
\lim\limits_{\ve \r 0} \;\; \sup\limits_{U_0 \in \cU_{\ell_*,A}} \;\; \sup\limits_{x \in A} \; E_x\Big[\exp\Big\{- \mbox{\f $\dis\frac{a}{\ve^2}$} \; \dis\int^\infty_0 1\{d (X_s,S) \le \ve\} \,ds\Big\}\Big] = 0.
\end{equation}
\end{cor}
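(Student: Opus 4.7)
The plan is to realize the exponent in (\ref{3.18}) as the integral of a soft potential $V_\ve$ which lies in $\cV_{U_0,\ve,\eta}$ for some $\eta = \eta(a) \in (0,1)$ depending only on $a$, and then to invoke the solidification estimate (\ref{3.4}) of Theorem \ref{theo3.1} directly.

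Concretely, for each $\ve > 0$ and each $U_0 \in \cU_{\ell_*,A}$ I would set $V_\ve(y) = \frac{a}{\ve^2}\, 1\{d(y,S) \le \ve\}$, a bounded non-negative Borel function on $\IR^d$. For any $x \in S = \partial U_0$, the definition of $\tau_\ve$ ensures $|X_s - x|_\infty < \ve$ for every $s \in [0,\tau_\ve)$, hence $d(X_s,S) \le |X_s - x|_\infty < \ve$, so that $V_\ve(X_s) = a/\ve^2$ throughout $[0,\tau_\ve)$. Brownian scaling (under $P_x$, $\tau_\ve$ has the same law as $\ve^2 \tau_1$ under $P_0$) then yields
\[
E_x\Big[\exp\Big\{-\dis\int_0^{\tau_\ve} V_\ve(X_s)\,ds\Big\}\Big] = E_x\big[\exp\{-a\tau_\ve/\ve^2\}\big] = E_0\big[\exp(-a\tau_1)\big] =: 1-\eta(a),
\]
with $\eta(a) \in (0,1)$ since $\tau_1$ is $P_0$-a.s.\ strictly positive and finite. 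This shows $V_\ve \in \cV_{U_0,\ve,\eta(a)}$, uniformly over $U_0$ and $\ve$, and $\eta(a)$ depends only on $a$.

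To conclude, with $\ell_*$ fixed, I would take $u_\ve = \ve \cdot 2^{\ell_*}$, so that $\ve = u_\ve\, 2^{-\ell_*}$ and $u_\ve \r 0$ as $\ve \r 0$. The left-hand side of (\ref{3.18}) is then majorized by the quantity under the $\lim_{u \r 0}$ sign in (\ref{3.4}) with $\eta = \eta(a)$, evaluated at $u = u_\ve$, and this tends to $0$ by Theorem \ref{theo3.1}. There is no genuine obstacle: the heavy lifting resides in Theorem \ref{theo3.1}, and the corollary reduces to the scaling identification above, which places $V_\ve$ into the appropriate soft-obstacle class at a strength $\eta(a)$ independent of $U_0$ and $\ve$.
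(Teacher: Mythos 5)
Your proof is correct and is essentially the same as the paper's: both define $V(y) = \frac{a}{\ve^2}\,1\{d(y,S)\le\ve\}$, observe that $V$ lies in $\cV_{U_0,\ve,\eta}$ with $1-\eta = E_0[\exp(-a\tau_1)]$ independent of $U_0$ and $\ve$, and conclude via (\ref{3.4}) of Theorem \ref{theo3.1}. The only difference is that you spell out the scaling identity $E_x[\exp(-a\tau_\ve/\ve^2)] = E_0[\exp(-a\tau_1)]$ which the paper leaves implicit.
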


\begin{proof}
We write $V(y) = \frac{a}{\ve^2} \,1\{d(y,S) \le \ve\}$, and note that in the notation of (\ref{3.2}) $V \in \cV_{U_0,\ve,\eta}$, where $1 - \eta = E_0 [\exp\{-a \, \tau_1\}]$ (see (\ref{1.20}) for notation). The claim now follows from (\ref{3.4}) of Theorem \ref{theo3.1}.
\end{proof}

\begin{remark}\label{rem3.6} \rm  1) We briefly sketch here an example showing that the interfaces and porous interfaces under consideration may undergo degenerations in certain parts of space, where they may become brittle and have little trapping power (see Fig.~4 below).

\medskip
We consider $d \ge 4$, $0 \le \ve < 1$, $r(\ve) = n(\ve) \,\ve$, where $n(\ve) = [\ve^{-\frac{2}{d-1}}]$, so that
\begin{equation}\label{3.19}
\mbox{$r(\ve) \sim \ve^{\frac{d-3}{d-1}}$ and $ r(\ve) / \ve \r \infty$, as $\ve \r 0$}.
\end{equation}

\n
We let $\wt{\IL}_\ve$ denote the cable graph consisting of solid segments between neighboring sites in $r(\ve) \, \IZ^d$. We also consider the discrete set $\IL_\ve$ of sites placed at regular spacings $\ve$ along each solid edge of $\wt{\IL}_\ve$. We endow $\IL_\ve$ with its natural graph structure and set $\Gamma_\ve = \IL_\ve \cap B(0,100)$.

\medskip
We consider $0 \le a < \frac{1}{2}$ (fixed) and assume that $\Sigma$ (the ``porous interface'') is such that $\Sigma \cap B(0,100)$ coincides with the restriction of $\bigcup_{x \in \Gamma_\ve} B_{x,a \ve}$ to $B(0,100)$ (where $B_{z,u}$ stands for the closed Euclidean ball of center $z$ and radius $u$). One can arrange $\Sigma$ so that $\Sigma \in \cS_{U_0,\ve,\eta}$ for a suitable $U_0$ and $\eta$ (depending on $a$). For instance, one can consider some spanning tree of $\Gamma_\ve$ rooted at a point on the inner boundary of $\Gamma_\ve$ in $\IL_\ve$, and let $U_0$ behave in the $\ve^2$-neighborhood of $B(0,100)$ as a ``cactus-like'' structure thickening this spanning tree with thin tubes of radius $\ve^2$ with a stem hanging out at the root.

\psfrag{re}{$r(\varepsilon)$}
\psfrag{e}{$\varepsilon$}
\psfrag{x}{$x$}
\begin{center}
 \includegraphics[width=8cm]{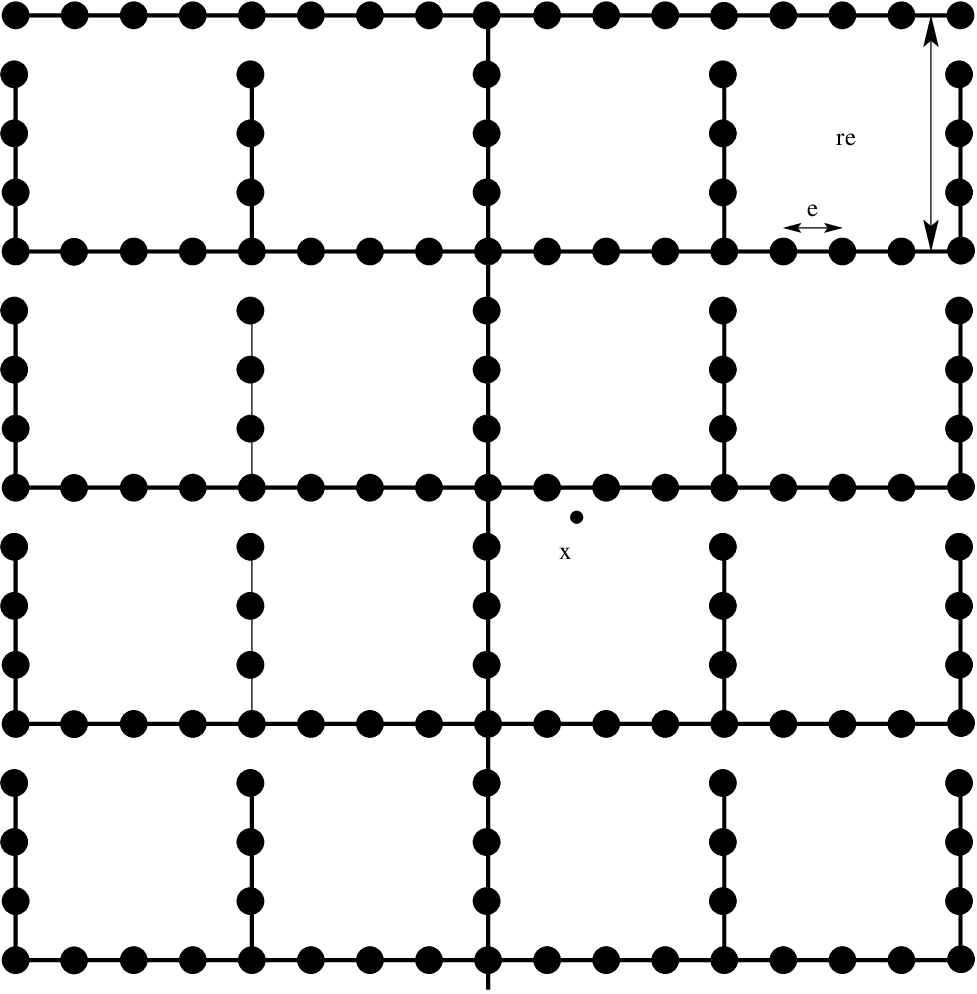}
\end{center}

\begin{center}
\begin{tabular}{lp{12cm}}
Fig.~4: & An illustration of a part of $U_0$ and $\Sigma$, where $\Sigma$ consists of the 
union of the black balls (with radius $a \ve$).  The black lines 
correspond to thin tubes of radius $\ve^2$  (their union constitutes the ``cactus-like'' structure), and the point $x$ belongs to $M_\ve$. 
 \end{tabular}
\end{center}

\medskip
We then consider $M_\ve$, the set of points at Euclidean distance $\ve$ from $\wt{\IL}_\ve$ and sup-distance at most $\frac{r(\ve)}{4}$ from the middle-point of a solid segment of $\wt{\IL}_\ve$ contained in $B(0,10)$. As we now explain, for some constants $c>0$ and $0<c' <1$,
\begin{equation}\label{3.20}
\lim\limits_{\ov{\ve \r 0}}\;\; \inf\limits_{x \in M_\ve} \; P_x[H_\Sigma > \tau_c] \ge c',
\end{equation}

\n
and $\Sigma$ has little trapping power on Brownian motion starting in $M_\ve$. 

\medskip
To see (\ref{3.20}), denote by $D_\ve$ the set of points at Euclidean distance at least $r(\ve)/10$ from $\wt{\IL}_\ve$. As we now explain, one has a constant $c''>0$ such that
\begin{equation}\label{3.21}
\lim\limits_{\ov{\ve \r 0}}\;\; \inf\limits_{x \in M_\ve} \; P_x[H_\Sigma > H_{D_\ve \cap B(0,20)}] \ge c''.
\end{equation}

\n
Indeed, to each $x$ in $M_\ve$, one can attach a unique solid segment of $\wt{\IL}_\ve$, and taking as origin the middle of the segment, one can decompose Brownian motion into a $1$-dimensional motion parallel to the segment and a $(d-1)$-dimensional motion transversal to the segment. One can make sure that the $(d-1)$-dimensional component reaches radius $r(\ve)/10$ before time $C\,r(\ve)^2$ and reaching radius $\frac{\ve}{2}$ with a non-degenerate probability. One can also force the $1$-dimensional component not to move by more than $r(\ve)/8$ from its starting point up to time $C\,r(\ve)^2$, with non-degenerate probability. The lower bound (\ref{3.21}) now follows by independence.

\medskip
Then, for $y \in B(0,20) \cap D_\ve, \; v \le 1$ and $a \,\ve < v$, we have
\begin{equation}\label{3.22}
\begin{array}{l}
P_y [H_\Sigma  < \tau_v] \le \dsl_{z \in \IL_\ve \cap B(y,2v)} P_y [H_{B_{z,a\ve}} < \infty] =
\\[3ex]
\mbox{(with similar notation as below (\ref{3.15a}))}
\\[1ex]
\dsl_{z \in \IL_\ve \cap B(y,2v)} \dis\int g(y,y') \,e_{B_{z,a \ve}} (dy') \le \dsl_{k \in \IZ^d \backslash \{0\} \atop |k r(\ve)|_\infty \le c'v} \; \dis\frac{c}{|k r(\ve)|^{d-2}} \; \dis\frac{r(\ve)}{\ve} \; a^{d-2} \,\ve^{d-2} \le
\\[1ex]
c''\;\dis\frac{\ve^{d-3}}{r(\ve)^{d-3}} \;\Big(\mbox{\f $\dis\frac{v}{r(\ve)}$}\Big)^2 a^{d-2} \stackrel{(\ref{3.19})}{\le} \ov{c} \,v^2 \,a^{d-2} \le \mbox{\f $\dis\frac{1}{2}$} \; \;\mbox{if we choose $v = \wt{c}$ small}.
\end{array}
\end{equation}

\medskip\n
The strong Markov property combined with (\ref{3.21}) and (\ref{3.22}) now yields (\ref{3.20}). Although we will not need it, let us mention that a more detailed analysis of the above example along the lines of the constant capacity regime of small obstacles (see for instance \cite{BaxtJain87}, \cite{CiorMura82}, and also \cite{Szni98a}, p.~116-120) would reveal that $c$ can be chosen equal to $1$ in (\ref{3.20}), but that one does indeed feel $\Sigma$ when starting in $B(0,10)$, in the sense that $\ov{\lim}_{\ve \r 0} \; \sup_{|x|_\infty \le 10} P_x [H_\Sigma > \tau_1] \le 1 - c(a)$.

\bigskip\n
2) In Theorem \ref{theo3.1}, when $A = \{0\}$, scaling can be applied to reformulate (\ref{3.3}) and (\ref{3.4}). Indeed, for any $\ell_* \ge 0$, $U_0 \in \cU_{\ell_*}$ is equivalent to $2^{\ell_*} U_0 \in \cU_0$. Moreover, given $U_0 \in \cU_{\ell_*}$, for $\ve > 0$, $\eta \in (0,1)$, then $\Sigma \in \cS_{U_0,\ve,\eta}$ is equivalent to $2^{\ell_*} \Sigma \in \cS_{2^{\ell_*}U_0,\ve 2^{\ell_*},\eta}$ and $P_0[H_\Sigma < \infty] = P_0 [H_{2^{\ell_*}\Sigma} < \infty]$. Likewise, $V \in \cV_{U_0,\ve,\eta}$ is equivalent to $4^{-\ell_*} V(2^{-\ell_*} \, \cdot) \in \cV_{2^{\ell_*} U_0,\ve 2^{\ell_*},\eta}$, and
\begin{equation*}
E_0 \Big[\exp\Big\{- \dis\int_0^{\infty} V(X_s) \,ds\Big\}\Big] = E_0\Big[\exp\Big\{- \dis\int_0^\infty 4^{-\ell_*} V(2^{-\ell_*} X_s) \, ds \Big\}\Big].
\end{equation*}

\medskip\n
As a result, when $A = \{0\}$, (\ref{3.3}) can be restated as
\begin{equation}\label{3.23}
\lim\limits_{\ve \r 0} \;\; \sup\limits_{U_0 \in \cU_{\ell_* = 0}} \;\; \sup\limits_{\Sigma \in \cS_{U_0,\ve,\eta}} P_0 [ H_\Sigma = \infty] = 0,
\end{equation}
and (\ref{3.4}) can be restated as
\begin{equation}\label{3.24}
\lim\limits_{\ve \r 0} \;\; \sup\limits_{U_0 \in \cU_{\ell_* = 0}} \;\; \sup\limits_{V \in \cV_{U_0,\ve,\eta}} E_0\Big[\exp\Big\{ - \dis\int^\infty_0 V(X_s) \,ds \Big\}\Big] = 0. 
\end{equation}

\smallskip\n
3) In the context of Corollary \ref{cor3.4} the situation is typically simpler when $A$ is a convex set. If $\pi_A$ denotes the projection on $A$, then $\pi_A$ is a contraction for the Euclidean distance. If $\Sigma$ is a porous interface and $\Sigma' = \pi_A(\Sigma)$, then one knows that ${\rm cap}(\Sigma') \le {\rm cap}(\Sigma)$, see \cite{Land72}, p.~58, or \cite{Matt95}, p.~126. In good cases $\Sigma'$ satisfies a Wiener criterion (see for instance \cite{Szni98a}, p.~72), which quantifies that $\Sigma'$ is felt on many scales when starting from $A$, and permits one to show that $\inf_{x \in \partial A} P_x[H_{\Sigma'} < \infty]$ is close to $1$. One can then infer from this fact a lower bound on ${\rm cap}(\Sigma)$.  \hfill $\square$
\end{remark}

\section{Disconnection}

We will now apply the results of the previous section, specifically Corollary \ref{cor3.4}, to derive large deviation upper bounds on the probability that random interlacements in $\IZ^d$, when their vacant set is in a strongly percolative regime, or the simple random walk disconnect a large macroscopic body from the boundary of a large box of comparable size, which contains this body, cf.~Theorem \ref{theo4.1}, Corollary \ref{cor4.4}, and Remark \ref{rem4.5} 3). The macroscopic body in question will correspond to the discrete blow-up of a compact set $A$ in $\IR^d$, with non-empty interior, see (\ref{4.4}), and also Remark \ref{rem4.5} 3) for a variant of this set-up. The main novelty compared to \cite{Szni17}, where $A$ was itself a box, and the method could plausibly have been adapted to the case of a regular compact convex set $A$, is that here we do not require any convexity assumption on $A$. The interfaces $S$ and the porous interfaces $\Sigma$ will arise in the context of a coarse graining procedure, which underlies the large deviation upper bounds that we derive, see (\ref{4.48}), (\ref{4.49}).

\medskip
We briefly introduce some notation concerning random interlacements, and refer to the end of Section 1 of \cite{Szni17} and the references therein for more details. Throughout we assume $d \ge 3$. The random interlacements $\cI^u, u \ge 0$, and the corresponding vacant sets $ \cV^u = \IZ^d \backslash \cI^u$ are defined on a certain probability space $(\Omega, \cA, \IP)$. In essence, $\cI^u$ corresponds to the trace left on $\IZ^d$ by a certain Poisson point process of doubly infinite trajectories modulo time-shift that tend to infinity at positive and negative infinite times, with intensity proportional to $u$. As $u$ grows, $\cV^u$ becomes thinner and it is by now well-known (see for instance \cite{CernTeix12}, \cite{DrewRathSapo14c}) that there is a critical level $u_* \in (0,\infty)$ such that 
\begin{equation}\label{4.1}
\begin{array}{l}
\mbox{for $u < u_*$, $\IP$-a.s., $\cV^u$ has an infinite component},
\\[1ex]
\mbox{for $u > u_*$, $\IP$-a.s., all connected components of $\cV^u$ are finite}. 
\end{array}
\end{equation}

\n
One can further introduce critical values
\begin{align}
&0 < \ov{u} \le u_* \le u_{**} < \infty, \;\mbox{where}\label{4.2}
\\[1ex]
&u_{**} = \inf\{u \ge 0; \, \liminf\limits_L \;\IP[B_{\IZ^d}(0,L) \stackrel{\cV^u}{\longleftrightarrow}  
\partial B_{\IZ^d}(0,2L)] = 0\} \label{4.3}
\end{align}

\n
(the event under the probability corresponds to the existence of a nearest neighbor path in $\cV^u$ between $B_{\IZ^d}(0,L) \stackrel{\rm def}{=} B(0,L) \cap \IZ^d$ and the exterior boundary of $B_{\IZ^d}(0,2L)$).

\medskip
We refer to (2.3) of \cite{Szni17} for the precise definition of $\ov{u}$. It is known to be positive thanks to Theorem 1.1. of \cite{DrewRathSapo14a} (and also \cite{Teix11}, when $d \ge 5$). The ranges $u > u_{**}$ and $0 < u < \ov{u}$ respectively correspond to strongly non-percolative and strongly percolative regimes of the vacant set $\cV^u$. It is plausible but open at the moment that actually $\ov{u} = u_* = u_{**}$.

\medskip
We are interested here in the strongly percolative regime $0 < u < \ov{u}$ for the vacant set $\cV^u$. We consider
\begin{equation}\label{4.4}
\mbox{a compact subset $A$ of $\IR^d$},
\end{equation}
and we assume that $M > 0$ is such that
\begin{equation}\label{4.5}
A \subseteq \mathring{B}(0,M).
\end{equation}
Given $N \ge 1$, the discrete blow-up of $A$ is defined as
\begin{equation}\label{4.6}
A_N = (NA) \cap \IZ^d,
\end{equation}
and we write
\begin{equation}\label{4.7}
S_N = \{x \in \IZ^d; \, |x|_\infty = [MN]\}.
\end{equation}

\medskip\n
We note that for large $N$, i.e.~$N \ge N_0(A,M)$,
\begin{equation}\label{4.8}
A_N \subseteq B_{\IZ^d} (0,MN) \backslash S_N.
\end{equation}
Our interest lies in the disconnection event
\begin{equation}\label{4.9}
\cD^u_N = \{A_N \stackrel{\cV^u}{\longleftrightarrow}  \hspace{-3ex} \mbox{\f $/$} \quad S_N\}
\end{equation}

\medskip\n
(corresponding to the absence of a nearest neighbor path in $\cV^u$ between $A_N$ and $S_N$), and we tacitly assume from now on that $N \ge N_0(A,M)$. The main result of this section is the following asymptotic upper bound.

\begin{thm}\label{theo4.1}
Assume that $A$ is a compact subset of $\IR^d$ and $M > 0$ satisfies (\ref{4.5}). Then, for $0 < u < \ov{u}$, one has (with $\mathring{A}$ the interior of $A$)
\begin{equation}\label{4.10}
\limsup\limits_{N} \;\; \mbox{\f $\dis\frac{1}{N^{d-2}}$} \;\log \IP [\cD^u_N] \le  -\mbox{\f $\dis\frac{1}{d}$} \;(\sqrt{\ov{u}} - \sqrt{u})^2 {\rm cap} (\mathring{A}).
\end{equation}
\end{thm}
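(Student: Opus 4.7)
The plan is to follow the coarse-graining strategy outlined above Figure~2, combining it with the solidification estimate of Corollary \ref{cor3.4}. I introduce two mesoscopic scales: a small scale $L_0$ slightly larger than $(N \log N)^{1/(d-1)}$, for individual ``blocking boxes,'' and a large scale $\wh{L}_0$ slightly smaller than $N$, at which blocking boxes are required to have ``substantial presence.'' On $\cD^u_N$, any nearest-neighbor path from $A_N$ to $S_N$ in $\cV^u$ must be blocked, and a Peierls/interface-tracking argument (in the spirit of Section~2 of \cite{DembSzni06}) lets me remove a bad event whose probability is negligible at the exponential scale $N^{d-2}$, retaining on the effective disconnection event $\wt{\cD}^u_N$ a well-controlled blocking interface of $L_0$-boxes.

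Next, I partition $\wt{\cD}^u_N$ into events $\cD_{N,\kappa}$ indexed by $\kappa$ running over a set of cardinality $\exp\{o(N^{d-2})\}$. The label $\kappa$ records a selection of $2\wh{L}_0$-boxes together with, inside each selected box, a selection of $L_0$-boxes carrying ``substantial presence'' of the blocking interface. Writing $C_\kappa$ for the union of the selected $L_0$-boxes, the key probabilistic estimate, derived as in \cite{Szni17} via a change of measure on the Poisson point process of interlacement trajectories, gives
\begin{equation*}
\IP[\cD_{N,\kappa}] \le \exp\Big\{- \mbox{\f $\dis\frac{1}{d}$} (\sqrt{\ov{u}} - \sqrt{u})^2 \, {\rm cap}_{\IZ^d}(C_\kappa) + o(N^{d-2})\Big\}.
\end{equation*}
The increment $\ov{u} - u$ reflects the fact that, since $u < \ov{u}$ lies in the strongly percolative regime, disconnection on each blocking $L_0$-box effectively forces the local interlacement level there to reach $\ov{u}$; the Poisson tilting from level $u$ to level $\ov{u}$ then produces $(\sqrt{\ov{u}} - \sqrt{u})^2$ as the relative-entropy cost per unit capacity.

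The role of Section~3 is now to convert ${\rm cap}_{\IZ^d}(C_\kappa)$ into a uniform lower bound involving ${\rm cap}(\mathring{A})$. Proposition \ref{propA.1} yields ${\rm cap}_{\IZ^d}(C_\kappa) \ge \frac{1}{d} N^{d-2} \,{\rm cap}(\Sigma_\kappa)(1-o(1))$, where $\Sigma_\kappa$ is the solid $\IR^d$-filling of $C_\kappa / N$. The same coarse-graining simultaneously produces a bounded Borel set $U_{0,\kappa} \subseteq \IR^d$, namely the $N$-rescaling of the union of the selected $2\wh{L}_0$-boxes; its boundary $S_\kappa = \partial U_{0,\kappa}$ is the segmentation and $\Sigma_\kappa$ the associated porous interface in the sense of Section~3. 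Fixing a compact set $A' \subseteq \mathring{A}$ of positive capacity and choosing $\ell_*$ so that $2^{-\ell_*}$ and $\ve$ are both comparable to $\wh{L}_0/N$, one checks that $U_{0,\kappa} \in \cU_{\ell_*,A'}$ and that $\Sigma_\kappa \in \cS_{U_{0,\kappa},\ve,\eta}$ for some fixed $\eta \in (0,1)$ depending only on the substantial-presence threshold (the definition of $\cD_{N,\kappa}$ forcing Brownian motion from any $x \in S_\kappa$ to meet $\Sigma_\kappa$ before moving at sup-distance $\ve$ with probability at least $\eta$). Corollary \ref{cor3.4} then delivers ${\rm cap}(\Sigma_\kappa) \ge {\rm cap}(A')(1-o(1))$ uniformly in $\kappa$. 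Summing over the $\exp\{o(N^{d-2})\}$ partition classes, taking logarithms, and finally letting $A' \uparrow \mathring{A}$ so that ${\rm cap}(A') \uparrow {\rm cap}(\mathring{A})$, yields (\ref{4.10}).

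The main obstacle lies in the third step, namely the rigorous verification that the coarse-grained data genuinely produces objects satisfying the hypotheses of Corollary \ref{cor3.4}: that the local density of $U_1 = \IR^d \setminus U_{0,\kappa}$ stays below $\frac{1}{2}$ on dyadic scales larger than $\wh{L}_0/N$ centered at points of $A'$, and that the substantial-presence condition translates into a uniform non-degenerate $\eta$. The threshold defining ``substantial presence'' must be tuned carefully: large enough that $\eta$ is non-degenerate so the solidification estimate triggers, yet small enough that the number of partition classes remains $\exp\{o(N^{d-2})\}$ and that the exponential estimate on $\IP[\cD_{N,\kappa}]$ retains the sharp prefactor $\frac{1}{d}(\sqrt{\ov{u}}-\sqrt{u})^2$.
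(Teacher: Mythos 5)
Your high-level strategy matches the paper's: coarse-graining at scales $L_0$ and $\wh{L}_0$, removal of a super-exponentially negligible bad event, a partition into $\exp\{o(N^{d-2})\}$ events $\cD_{N,\kappa}$, the exponential estimate from \cite{Szni17}, Proposition~\ref{propA.1} to pass to Brownian capacity, and Corollary~\ref{cor3.4} for the uniform lower bound. However, there are three concrete problems.

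First, your definition of $U_{0,\kappa}$ as ``the $N$-rescaling of the union of the selected $2\wh{L}_0$-boxes'' cannot work. That set is a thin shell around the interface region; it does not contain $A'$, so for $x\in A'$ the local density of $U_{1,\kappa}=\IR^d\setminus U_{0,\kappa}$ at scale $2^{-\ell_*}$ is close to $1$, not at most $\tfrac{1}{2}$, and $U_{0,\kappa}\notin\cU_{\ell_*,A'}$. The paper instead defines $U_1$ to be the unbounded connected component of $\IR^d\setminus\Delta_N$ (where $\Delta_N$ is the $\frac{1}{50d}\wh{L}_0/N$-neighborhood of $\wh{\cS}_N/N$) and $U_0=\IR^d\setminus U_1$, cf.~(\ref{4.49}); the insulation Lemma~\ref{lem4.3} then shows that on $\cD^u_N$ the ``filled-in'' $U_0$ contains the shrunk set $\{z\in A:\,d(z,\partial A)\ge(\wh L_0+L_0+1)/N\}$, which for large $N$ contains $A'$ at distance $\ge 2^{-\ell_*}$. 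This ``fill in the interior'' step is essential and missing from your construction.

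Second, your prescription that ``$2^{-\ell_*}$ and $\ve$ are both comparable to $\wh{L}_0/N$'' is incompatible with Corollary~\ref{cor3.4}, which requires $\ve\le u\,2^{-\ell_*}$ with $u\to 0$, i.e.\ $\ve/2^{-\ell_*}\to 0$. In the paper $\ell_*=\ell_*(A,A')$ is \emph{fixed} once $A'\subset\mathring A$ is chosen, while $\ve\asymp\wh L_0/N\to 0$ as $N\to\infty$, so the ratio tends to zero. Keeping both comparable would block the passage to the limit $u\to 0$ in (\ref{3.15}).

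Third, there is a $\frac{1}{d}$ double-count: the exponential estimate (\ref{4.14}) reads $\IP[\cdots]\le\exp\{-(\sqrt{\gamma}-\cdots)(\sqrt{\gamma}-\sqrt{u})\,{\rm cap}_{\IZ^d}(C)\}$ \emph{without} a $\frac{1}{d}$, and the factor $\frac{1}{d}$ is produced once, via Proposition~\ref{propA.1}, when ${\rm cap}_{\IZ^d}(C)$ is replaced by $\frac{1}{d}N^{d-2}{\rm cap}(\Sigma)$. As written, your estimate on $\IP[\cD_{N,\kappa}]$ already carries a $\frac{1}{d}$, and combined with your subsequent use of Proposition~\ref{propA.1} you would end up with $\frac{1}{d^2}$, not $\frac{1}{d}$, in (\ref{4.10}). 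Also, the clean factor $(\sqrt{\ov u}-\sqrt u)^2$ only appears after first working with $\alpha>\beta>\gamma$ in $(u,\ov u)$, $\wt\ve\in(0,1)$ and $K$, and then letting $K\to\infty$, $\wt\ve\to 0$, $\gamma\to\ov u$, which you should make explicit.

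You correctly flag the verification that the coarse-grained data satisfies the hypotheses of Corollary~\ref{cor3.4} as ``the main obstacle,'' but your proposed $U_{0,\kappa}$ makes that verification impossible; the fix is precisely the ``unbounded component complement'' construction and Lemma~\ref{lem4.3}.
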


\medskip
Of course, when the compact set $A$ is regular in the sense that ${\rm cap}(A) = {\rm cap}(\mathring{A})$, one can replace ${\rm cap}(\mathring{A})$ by ${\rm cap}(A)$ in the right member of (\ref{4.10}). Theorem \ref{theo4.1} has a direct application that yields a similar asymptotic upper bound on the probability that simple random walk disconnects $A_N$ from $S_N$, see Corollary \ref{cor4.4} at the end of this section.

\medskip
Before starting the proof of Theorem \ref{theo4.1}, we introduce some further notation and recall three results from \cite{Szni17}. We consider $0 < u < \ov{u}$, as well as
\begin{equation}\label{4.11}
\mbox{$\alpha > \beta > \gamma$ in $(u,\ov{u})$, and $\wt{\ve} \in (0,1)$ such that $\wt{\ve} \Big(\sqrt{\mbox{\f $\dis\frac{\ov{u}}{u}$}} - 1\Big) < 1$}
\end{equation}

\n
(the parameter $\wt{\ve}$ corresponds to $\ve$ in \cite{Szni17}, but we use here a different notation to avoid a confusion with the parameter $\ve$ of Section 3). We consider from now on an integer $K \ge c(\alpha,\beta,\gamma,\wt{\ve})$ ($\ge 100$, this constant corresponds as below (6.4) of \cite{Szni17} to $c_4(\alpha,\beta,\gamma) \vee c_5(\wt{\ve}) \,\vee c_8(\alpha,\beta,\gamma)$, in the notation of Theorem 2.3, Proposition 3.1 and Theorem 5.1 of \cite{Szni17}). We also consider an integer $L \ge 1$, and for any $z \in \IL \stackrel{\rm def}{=} L \,\IZ^d$, we set
\begin{equation}\label{4.12}
\begin{split}
B_z = & \; z + [0,L)^d \cap \IZ^d \subseteq D_z =  
\\
&\;z+ [-3L,4L)^d \cap \IZ^d \subseteq U_z = z +  [-KL + 1,KL - 1)^d \cap \IZ^d.
\end{split}
\end{equation}

\n
We refer to (2.11) - (2.13) of \cite{Szni17} for the notion of a good$(\alpha, \beta,\gamma)$-box $B_z$ (which is otherwise bad$(\alpha,\beta,\gamma)$). The details of the definition will not be important here. In essence, one looks at the excursions of the interlacements between $D_z$ and the complement of $U_z$. One can order them in a natural fashion, and for a good$(\alpha,\beta,\gamma)$-box $B_z$, the complement of the first $\alpha \,{\rm cap}_{\IZ^d}(D_z)$ excursions leaves in $B_z$ a connected set with sup-norm diameter at least $L/10$, which is connected to similar components in neighboring boxes of $B_z$ via paths in $D_z$ avoiding the first $\beta \, {\rm cap}_{\IZ^d}(D_z)$ excursions (and ${\rm cap}_{\IZ^d}(\cdot)$ stands for the capacity attached to the simple random walk on $\IZ^d$). In addition, the first $\beta \,{\rm cap}_{\IZ^d}(D_z)$ excursions spend a substantial ``local time'' on the (inner) boundary of $D_z$, which is at least $\gamma \,{\rm cap}_{\IZ^d}(D_z)$.

\medskip
We also need the notation $N_u(D_z)$, see (\ref{2.14}) of \cite{Szni17}, which refers to the number of excursions from $D_z$ to $\partial U_z$ (the exterior boundary of $U_z$ in $\IZ^d)$, contained in the interlacement trajectories up to level $u$.

\medskip
We now recall three facts from \cite{Szni17}. First, a connectivity statement, cf.~Lemma 6.1 of \cite{Szni17}:
\begin{equation}\label{4.13}
\begin{array}{l}
\mbox{if $B_{z_i}$, $0 \le i \le n$, is a sequence of neighboring $L$-boxes}
\\
\mbox{(i.e.~the $z_i, 0 \le i \le n$, form a nearest-neighbor path in $\IL$), which are}
\\
\mbox{good$(\alpha,\beta,\gamma)$, and for $i = 0,\dots,n$, $N_u(D_{z_i}) < \beta \,{\rm cap}_{\IZ^d}(D_{z_i})$, then there}
\\
\mbox{exists a path in $(\bigcup^n_{i=0} D_{z_i}) \cap \cV^u$ starting in $B_{z_0}$ and ending in $B_{z_n}$}.
\end{array}
\end{equation}

\n
Second, an exponential bound, cf.~Theorem 3.2 of \cite{Szni17}, namely, if $\cC$ is a non-empty finite subset of $\IL$ with points at mutual sup-distance at least $\ov{K} L$, where $\ov{K} = 2 K+3$, then
\begin{equation}\label{4.14}
\begin{array}{l}
\mbox{$\IP \big[\bigcap\limits_{z \in \cC} \{B_z$ is good$(\alpha,\beta,\gamma)$ and $N_u(D_z) \ge \beta \,{\rm cap}_{\IZ^d}(D_z)\}\big] \le$}
\\[1ex]
\exp\Big\{ - \Big(\sqrt{\gamma} - \mbox{\f $\dis\frac{\sqrt{u}}{1 - \wt{\ve} \,\big(\sqrt{\frac{\ov{u}}{u}} - 1\big)}$} \Big) (\sqrt{\gamma} - \sqrt{u}) \;{\rm cap}_{\IZ^d} (C)\Big\},
\end{array}
\end{equation}

\n
where $C = \bigcup_{z \in \cC} B_z$ (and ${\rm cap}_{\IZ^d}(D_z)$ does not depend on $z$ by translation invariance). The third fact, cf.~(5.1) of \cite{Szni17} (with the choice $\Gamma = 1$), as well as (5.6) and Theorem 5.1 of \cite{Szni17}, is a super-exponential bound
(its proof in \cite{Szni17} uses decoupling via the soft local time technique in the form of Section 2 of \cite{ComeGallPopoVach13}). Namely, there exists a positive function $\rho(L)$ (depending on $\alpha,\beta,\gamma,K$) with $\lim_L \rho(L) = 0$, such that setting
\begin{align}
&N_L = L^{d-1} / \log L, \; \mbox{when $L > 1$}, \label{4.15}
\\[1ex]
&\lim\limits_{L \r \infty} \; N_L^{-(d-2)} \log \IP \big[\mbox{there are at least $\rho(L) (\frac{N_L}{L})^{d-1}$ columns in}\label{4.16}
\\[-0.5ex]
&\hspace{3.4cm} \mbox{$[-N_L,N_L]^d$ in the direction $e$ containing} \nonumber
\\
&\hspace{3.4cm} \mbox{a bad$(\alpha,\beta,\gamma)$-box$\big] = -\infty$}, \nonumber
\end{align}

\medskip\n
where for $e$ vector of the canonical basis of $\IR^d$, a column in $[-N_L,N_L]^d$ in the direction $e$ refers to the collection of $L$-boxes $B_z$ intersecting $[-N_L,N_L]^d$ with same projection on the discrete hyperplane $\{x \in \IZ^d; x \cdot e = 0\}$, and (\ref{4.16}) holds for all vectors $e$ of the canonical basis.

\medskip
It is convenient to introduce the non-increasing function
\begin{equation}\label{4.17}
\begin{split}
\rho_*(u) =& \sup\{ \rho(L'); \,L' \ge [u]\}, \;\mbox{for $u \ge 0$, so that}
\\[1ex]
\rho_*(L) \ge &\,\mbox{$\rho(L)$ for integer $L$, and $\lim\limits_{u \r \infty} \rho_*(u) = 0$}.
\end{split}
\end{equation}

\n
We will now specify the choice of $L$ as a function of $N$ (that enters Theorem \ref{theo4.1}). We do this in a different fashion from \cite{Szni17} (we use here a slightly bigger scale). First, we choose a positive sequence $\gamma_N$, $N \ge 1$, such that
\begin{equation}\label{4.18}
\left\{ \begin{array}{rl}
{\rm i)} & \gamma_N \le 1,  
\\[1ex]
{\rm ii)} & \gamma_N^{2d}/ \rho_*\big((N \log N)^{\frac{1}{d-1}}\big) \underset{N}{\longrightarrow} \infty,
\\[2ex]
{\rm iii)} & \gamma_N^{\frac{d+1}{2}} / (\log N \, N^{-(d-2)}) \underset{N}{\longrightarrow} \infty,
\\[1.5ex]
 {\rm iv)} & \gamma_N \r 0
\end{array}\right.
\end{equation}

\medskip\n
(such a choice is possible since $\lim_{u \r \infty} \rho_*(u) = 0)$.

\medskip
We then define ($L_0$ will stand for our choice of $L$ as a function of $N$)
\begin{equation}\label{4.19}
L_0 = \big[(\gamma_N^{-1} \,N \log N)^{\frac{1}{d-1}}\big] \;\;\mbox{and} \;\; \wh{L}_0 = 100d [\sqrt{\gamma}_N \,N\big].
\end{equation}

\medskip\n
By (\ref{4.18}) iii) for large $N$, $L_0$ is smaller than $\wh{L}_0$. Heuristically, $\wh{L}_0$ can be thought of as ``nearly macroscopic'' (i.e.~of size $N$). As an aside, $\wh{L}_0/N$ (which is comparable to $\sqrt{\gamma}_N$ for large $N$) will in essence correspond to the parameter $\ve$ of Section 3. Together with these choices, we introduce the lattices
\begin{equation}\label{4.20}
\IL_0 = L_0 \, \IZ^d \;\;\mbox{and} \;\; \wh{\IL}_0 = \mbox{\f $\dis\frac{1}{100d}$} \; \wh{L}_0 \, \IZ^d = [\sqrt{\gamma}_N \,N] \, \IZ^d.
\end{equation}

\n
As a last preparation for the proof of Theorem \ref{theo4.1}, we have

\begin{lem}\label{lem4.2}
As $N \r \infty$, one has in the notation of (\ref{4.15}), (\ref{4.19})
\begin{equation}\label{4.21}
\rho(L_0) \;\Big(\mbox{\f $\dis\frac{N_{L_0}}{L_0}$}\Big)^{d-1} / (\wh{L}_0 / L_0)^{d-1} \underset{N}{\longrightarrow} 0.
\end{equation}

\medskip\n
In addition, if one defines the event (where $e$ runs over the canonical basis of $\IR^d$),
\begin{equation}\label{4.22}
\begin{split}
\cB_N = \bigcup\limits_e \big\{& \mbox{there are at least $\rho(L_0) \;\Big(\mbox{\f $\dis\frac{N_{L_0}}{L_0}$}\Big)^{d-1}$ columns of $L_0$-boxes}
\\[-0.5ex]
&\mbox{in the direction $e$ in $B_{\IZ^d} (0,10(M+1) N)$ that contain}
\\
&\mbox{a bad$(\alpha,\beta,\gamma) \,L_0$-box$\big\}$},
\end{split}
\end{equation}
then one has
\begin{equation}\label{4.23}
\lim\limits_N \; \mbox{\f $\dis\frac{1}{N^{d-2}}$} \;\log \IP[\cB_N] = - \infty \;\mbox{(super-exponential bound)}.
\end{equation}
\end{lem}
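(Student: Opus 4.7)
The plan is as follows. For \eqref{4.21}, I would first simplify the quotient as $\rho(L_0)(N_{L_0}/\widehat{L}_0)^{d-1}$ and extract the polynomial-in-$\gamma_N$ scaling of each factor. By \eqref{4.18}(iii) the decay of $\gamma_N$ is slow enough that $\log(\gamma_N^{-1}) = O(\log N)$; combined with \eqref{4.19} this forces $\log L_0 = \Theta(\log N)$, so that $N_{L_0} = L_0^{d-1}/\log L_0$ is of order $\gamma_N^{-1} N$. Together with $\widehat{L}_0$ being of order $\sqrt{\gamma_N}\,N$, this yields $(N_{L_0}/\widehat{L}_0)^{d-1}$ of order $\gamma_N^{-3(d-1)/2}$. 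Since $\gamma_N \le 1$ by \eqref{4.18}(i), $L_0$ is at least of order $(N \log N)^{1/(d-1)}$, and by the monotonicity of $\rho_*$ built into \eqref{4.17}, one has $\rho(L_0) \le \rho_*(L_0) \le \rho_*\bigl((N \log N)^{1/(d-1)}\bigr)$. Writing
\[
\rho(L_0)\bigl(N_{L_0}/\widehat{L}_0\bigr)^{d-1} \le c\,\Bigl[\rho_*\bigl((N\log N)^{1/(d-1)}\bigr)\big/\gamma_N^{2d}\Bigr] \cdot \gamma_N^{2d - 3(d-1)/2},
\]
and noting $2d - 3(d-1)/2 = (d+3)/2 > 0$, both factors on the right tend to $0$ by \eqref{4.18}(ii) and \eqref{4.18}(iv), proving \eqref{4.21}.

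For \eqref{4.23}, I would embed $\cB_N$ into the event $E_{L_0,e}$ appearing in the super-exponential estimate \eqref{4.16} applied with $L = L_0$. Since $N_{L_0}/N$ is of order $\gamma_N^{-1}$ and tends to infinity, for all $N$ large enough $B_{\IZ^d}(0, 10(M+1)N) \subseteq [-N_{L_0}, N_{L_0}]^d$. A column in direction $e$ inside the smaller box containing a bad$(\alpha,\beta,\gamma)\,L_0$-box is, by definition, also a column in the larger box containing the same bad box; distinct columns correspond to distinct projections on $\{x \cdot e = 0\}$, so the set of bad columns in the smaller box injects into the set of bad columns in $[-N_{L_0}, N_{L_0}]^d$. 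A union bound over the $2d$ vectors of the canonical basis then yields $\IP[\cB_N] \le 2d\,\IP[E_{L_0,e}]$.

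It only remains to quantify $\IP[E_{L_0,e}]$: by \eqref{4.16}, for every fixed $K > 0$ there exists $L_1(K)$ with $\IP[E_{L,e}] \le \exp(-K N_L^{d-2})$ for $L \ge L_1(K)$. Applying this with $L = L_0(N) \to \infty$ and dividing by $N^{d-2}$ gives
\[
\frac{1}{N^{d-2}} \log \IP[\cB_N] \le \frac{\log(2d)}{N^{d-2}} - K \Bigl(\frac{N_{L_0}}{N}\Bigr)^{d-2} \le -K
\]
for $N$ large (since $(N_{L_0}/N)^{d-2} \ge 1$ eventually); as $K$ is arbitrary, \eqref{4.23} follows.

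The main obstacle is the first part: one needs the exponent $2d$ in \eqref{4.18}(ii) to leave genuine positive slack against the $\gamma_N^{-3(d-1)/2}$ blow-up of $(N_{L_0}/\widehat{L}_0)^{d-1}$. This is precisely what \eqref{4.18}(ii) and \eqref{4.18}(iv) are engineered to deliver, the remaining slack being the factor $\gamma_N^{(d+3)/2}$. The second part is then a clean application of \eqref{4.16}, the key observation being that $N_{L_0}$ grows strictly faster than $N$, thanks to the choice of $L_0$ in \eqref{4.19} being slightly larger than the natural $(N\log N)^{1/(d-1)}$ threshold.
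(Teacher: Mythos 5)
Your proof is correct and follows essentially the same route as the paper's: for \eqref{4.21} you combine the bounds $N_{L_0} \le c\,\gamma_N^{-1}N$, $\wh{L}_0 \ge c'\sqrt{\gamma_N}\,N$, and $\rho(L_0)\le\rho_*((N\log N)^{1/(d-1)})$, and isolate the surplus power $\gamma_N^{(d+3)/2}$ exactly as the paper does (it arrives at the same exponent by separately bounding $(\wh{L}_0/L_0)^{d-1}$ below and $\rho(L_0)(N_{L_0}/L_0)^{d-1}$ above); for \eqref{4.23} you use the same inclusion $B_{\IZ^d}(0,10(M+1)N)\subseteq[-N_{L_0},N_{L_0}]^d$ (valid since $N_{L_0}\ge c\gamma_N^{-1}N\gg N$) together with the super-exponential bound \eqref{4.16}. (Minor slip: the canonical basis has $d$ vectors, not $2d$, but this is harmless for the log asymptotics.)
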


\begin{proof}
We first prove (\ref{4.21}). For large $N$, on the one hand, we have
\begin{equation}\label{4.24}
(\wh{L}_0 / L_0)^{d-1} \stackrel{(\ref{4.19})}{\ge} c \,\gamma_N^{\frac{d-1}{2} + 1} \;\mbox{\f $\dis\frac{N^{d-2}}{\log N}$} \; \mbox{(which tends to $\infty$ by (\ref{4.18}) iii))},
\end{equation}
and on the other hand
\begin{equation}\label{4.25}
\begin{array}{l}
\rho(L_0) \,(N_{L_0}/L_0)^{d-1} \stackrel{(\ref{4.15})}{=} \rho(L_0) (L^{d-2}_0 / \log L_0)^{d-1} \stackrel{(\ref{4.19})}{\le}
\\[1ex]
\rho(L_0) \, \gamma^{-(d-2)}_N (N \log N)^{d-2} (\log L_0)^{-(d-1)} \stackrel{(\ref{4.19})}{\le}
\\[1ex]
c \,\rho(L_0) \,\gamma^{-(d-2)}_N (N \log N)^{d-2} (\log N)^{-(d-1)} \stackrel{(\ref{4.17}) - (\ref{4.19})}{\le} 
\\[1ex]
c \, \rho_* \big((N \log N)^{\frac{1}{d-1}}\big) \, \gamma_N^{-(d-2)} \; \mbox{\f $\dis\frac{N^{d-2}}{\log N}$} \stackrel{(\ref{4.18})\, ii)}{\le} c \,\gamma_N^{d+2} \; \mbox{\f $\dis\frac{N^{d-2}}{\log N}$}\;.
\end{array}
\end{equation}

\medskip\n
Since $\gamma_N$ tends to zero, cf.~(\ref{4.18}) iv) and $d+2 > \frac{d-1}{2} + 1$, the claim (\ref{4.21}) follows from (\ref{4.24}), (\ref{4.25}).

\medskip
We now turn to the proof of (\ref{4.23}). We note that for large $N$, one has
\begin{equation}\label{4.26}
N_{L_0} \stackrel{(\ref{4.15})}{=} \; \mbox{\f $\dis\frac{L^{d-1}_0}{\log L_0}$} \stackrel{(\ref{4.19})}{\ge} c\,\gamma_N^{-1} \; \mbox{\f $\dis\frac{N \log N}{\log L_0}$}  \ge c' \, \gamma_N^{-1} \,N \ge 10(M + 1) \,N.
\end{equation}

\medskip\n
Hence, for large $N$ the event $\cB_N$ is contained in the union over $e$ of the events under the probability in (\ref{4.16}) where $L_0$ replaces $L$. In addition, by (\ref{4.26}), we know that $N_{L_0} \ge N$ for large $N$ and (\ref{4.23}) follows from (\ref{4.16}). This completes the proof of Lemma \ref{lem4.2}.
\end{proof}

We are now ready to begin the proof of Theorem \ref{theo4.1}. Here is an outline of the proof. We use a coarse graining procedure. In essence, for large $N$, on the disconnection event $\cD^u_N$, cf.~(\ref{4.9}), there will be an interface of $L_0$-boxes, either bad$(\alpha,\beta,\gamma)$ or with $N_u(D_z) \ge \beta \,{\rm cap}_{\IZ^d}(D_z)$, blocking the way between $A_N$ and the complement of $B_{\IZ^d}(0,(M+1)N)$. We will track this interface through boxes of size $\wh{L}_0$ (much larger than $L_0$, but small compared to the macroscopic size $N$), where the interface will have a substantial presence.  This step will involve the inspection of a certain local density in scale $\wh{L}_0$, cf.~(\ref{4.28}), and selecting a region where it is non-degenerate, cf.~(\ref{4.32}). After discarding the bad event $\cB_N$, which is negligible for our purpose, thanks to the super-exponential bound (\ref{4.23}), there will be few bad$(\alpha,\beta,\gamma)$ $L_0$-boxes in each $\wh{L}_0$-boxes, and we will be able to extract a ``porous interface'' made of boxes of size $\wh{L}_0$, so that in each such box there will be a substantial presence of boxes $B_z$ of size $L_0$, all good$(\alpha,\beta,\gamma)$, with $N_u(D_z) \ge \beta {\rm cap}_{\IZ^d}(D_z)$, at mutual distance at least $\ov{K} \,L_0$ (with $\ov{K} = 2 K + 3$ as required in (\ref{4.14})). This selection of $L_0$-boxes will use isoperimetric controls of Deuschel and Pisztora \cite{DeusPisz96} based on an elementary inequality of Loomis and Whitney \cite{LoomWhit49},  in a similar spirit as in Section 2 of \cite{DembSzni06}. These choices will have a small combinatorial complexity, namely $\exp\{o(N^{d-2})\}$, and will produce a coarse graining of the event $\cD_N^u \backslash \cB_N$. By the exponential bound (\ref{4.14}), we will then be reduced to the derivation of a uniform lower bound on ${\rm cap}_{\IZ^d}(C)$, for $C$ the union of the selected $L_0$-boxes. With the help of Proposition \ref{propA.1} of the Appendix, we will be able to replace discrete boxes and random walk capacity with $\IR^d$-boxes and Brownian capacity (first letting $N$ tend to infinity, and then choosing $K$ large). The desired uniform lower bound on the capacity will be provided by Corollary \ref{cor3.4}, where in essence $\ve$ corresponds to $\frac{\wh{L}_0}{N}$ ($\sim {\rm const.} \,\sqrt{\gamma}_N$). The bound in Theorem \ref{theo4.1} will then follow by letting $K$ tend to infinity, and then letting successively $\wt{\ve}$ go to zero and $\gamma$ (together with $\alpha, \beta$) go to $\ov{u}$.

\bigskip\n
{\it Proof of Theorem \ref{theo4.1}:} We recall the definitions of $L_0$ and $\wh{L}_0$ in (\ref{4.19}). Without loss of generality we assume that $\mathring{A} \not= \emptyset$ (otherwise (\ref{4.10}) is immediate) and that $N \ge N_0(A,M)$ so that (\ref{4.8}) holds. We are going to introduce a local density function $\wh{\sigma}(\cdot)$, see (\ref{4.28}) below, in order to track in scale $\wh{L}_0$ an interface of ``blocking $L_0$-boxes'', when $\cD^u_N$ occurs. More precisely, we introduce the random subset
\begin{equation}\label{4.27}
\begin{split}
\cU^1 = &\; \mbox{the union of all $L_0$-boxes $B_z$ that are either contained in}
\\
&\; \mbox{$B_{\IZ^d}(0,(M+1)N)^c$ or linked to an $L_0$-box contained in}
\\
&\; \mbox{$B_{\IZ^d}(0,(M+1)N)^c$ by a path of $L_0$-boxes $B_{z_i}$, $0 \le i \le n$,}
\\
&\;\mbox{which are all, except maybe for the last one, good$(\alpha, \beta,\gamma)$}
\\
&\;\mbox{and such that $N_u(D_{z_i}) < \beta {\rm cap}_{\IZ^d}(D_{z_i})$}.
\end{split}
\end{equation}
We then define the local density function
\begin{equation}\label{4.28}
\wh{\sigma}(x) = |\cU^1 \cap B_{\IZ^d}(x,\wh{L}_0) | / |B_{\IZ^d} (x,\wh{L}_0)|, \;\mbox{for $x \in \IZ^d$}.
\end{equation}
We note that $\wh{\sigma}(\cdot)$ has slow variation in the sense that
\begin{equation}\label{4.29}
|\wh{\sigma}(x+e) - \wh{\sigma}(x)| \le \mbox{\f $\dis\frac{2}{2 \wh{L}_0 + 1}$} \le \mbox{\f $\dis\frac{1}{\wh{L}_0}$} , \;\mbox{for all $x,e$ in $\IZ^d$ with $|e|_1 = 1$}.
\end{equation}

\n
When $B_{\IZ^d} (x,\wh{L}_0 + L_0) \subseteq B_{\IZ^d}(0,(M+1)N)^c$, any $L_0$-box intersecting $B_{\IZ^d}(x,\wh{L}_0)$ is contained in $B_{\IZ^d}(0,(M+1)N)^c$ and hence in $\cU^1$, so that
\begin{equation}\label{4.30}
\wh{\sigma}(x) = 1, \;\mbox{when} \; B_{\IZ^d} (x, \wh{L}_0 + L_0) \subseteq B_{\IZ^d} (0,(M+1)N)^c.
\end{equation}

\n
On the other hand, when $B_{\IZ^d} (x,\wh{L}_0 + L_0) \subseteq A_N (\stackrel{(\ref{4.6})}{=} (NA) \cap \IZ^d)$, any $L_0$-box intersecting $B_{\IZ^d}(x,\wh{L}_0)$ is contained in $A_N$. When $N$ is large, if such a box is contained in $\cU^1$, then by (\ref{4.27}) and the connectivity property (\ref{4.13}), there is a path in $\cV^u$ between $A_N$ und $B_{\IZ^d}(0,MN)^c$ and $\cD_N^u$ does not occur. So, we find that
\begin{equation}\label{4.31}
\mbox{for large $N$ on $\cD^u_N$, $\wh{\sigma}(x) = 0$ when $B_{\IZ^d}(x,\wh{L}_0 + L_0) \subseteq A_N$}.
\end{equation}

\n
Loosely speaking, the random set $\wh{\cS}_N$ that we will now introduce, provides a ``segmentation'' in (nearly macroscopic) scale $\wh{L}_0$ of the interface of blocking (and much smaller) $L_0$-boxes we are interested in. Specifically, we consider the random subset of $\wh{\IL}_0$ (see (\ref{4.20}) for notation)
\begin{equation}\label{4.32}
\wh{\cS}_N = \Big\{x \in \wh{\IL}_0; \; \mbox{\f $\dis\frac{1}{4}$} \le \wh{\sigma}(x) \le \mbox{\f $\dis\frac{3}{4}$} \Big\},
\end{equation}
as well as the compact subset of $\IR^d$
\begin{equation}\label{4.33}
\Delta_N = \bigcup\limits_{x \in \wh{\cS}_N} B\Big(\mbox{\f $\dis\frac{x}{N}$} , \; \mbox{\f $\dis\frac{1}{50d}$} \; \mbox{\f $\dis\frac{\wh{L}_0}{N}\Big)$}
\end{equation}

\medskip\n
(in essence, the unbounded component of the complement of $\Delta_N$ will play the role of $U_1$ in the first three sections of this article, see (\ref{4.49}) below).

\medskip
The Reader may possibly wish to read the statement of the next lemma, first skip its proof, and proceed above (\ref{4.39}) to see how the proof of Theorem \ref{theo4.1} unfolds.

\begin{lem}\label{lem4.3} (Insulation property of $\Delta_N$)

\medskip\n
For large $N$,
\begin{equation}\label{4.34}
\wh{\cS}_N \subseteq B_{\IZ^d}(0,(M+2)N) \cap \wh{\IL}_0,
\end{equation}
and on $\cD_N^u$,
\begin{equation}\label{4.35}
\begin{array}{l}
\mbox{the compact set $\big\{z \in A; d(z,\partial A) \ge \mbox{\f $\dis\frac{\wh{L}_0 + L_0 + 1}{N}$}\big\}$ is contained}
\\
\mbox{in the union of the bounded components of the open set $\IR^d \backslash \Delta_N$}.
\end{array}
\end{equation}
\end{lem}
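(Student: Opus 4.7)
My plan is to combine the slow-variation estimate (\ref{4.29})---in its iterated form $|\wh\sigma(y)-\wh\sigma(x)| \le d\,|y-x|_\infty/\wh L_0$---with the boundary identifications (\ref{4.30}) and (\ref{4.31}), using a discrete intermediate-value argument on the coarse lattice $\wh{\IL}_0$ to produce the topological separation. Throughout I will use the fact, which follows from (\ref{4.19}) and (\ref{4.18}), that $\wh L_0 + L_0 = o(N)$. For (\ref{4.34}), this immediately yields the claim: any $x \in \wh{\IL}_0$ with $|x|_\infty > (M+2)N$ satisfies $B_{\IZ^d}(x,\wh L_0+L_0) \subseteq B_{\IZ^d}(0,(M+1)N)^c$ for $N$ large, so (\ref{4.30}) forces $\wh\sigma(x)=1$ and thus $x \notin \wh\cS_N$.

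For (\ref{4.35}) I would fix $z$ with $d(z,\partial A) \ge (\wh L_0+L_0+1)/N$ (assuming $\mathring A \neq \emptyset$, else the claim is vacuous), and let $[Nz] \in \IZ^d$ be the nearest integer point to $Nz$. The choice of threshold is precisely what yields $B_{\IZ^d}([Nz],\wh L_0+L_0) \subseteq A_N$, so (\ref{4.31}) gives $\wh\sigma([Nz])=0$ on $\cD_N^u$. Iterating (\ref{4.29}) then forces $\wh\sigma(x_0) \le d \cdot \wh L_0/(200d)/\wh L_0 = 1/200$ for the nearest $\wh{\IL}_0$-point $x_0$ to $[Nz]$. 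To rule out $z \in \Delta_N$, I would argue by contradiction: any putative $x \in \wh\cS_N$ with $|Nz-x|_\infty \le \wh L_0/(50d)$ would satisfy $|x-x_0|_\infty \le \wh L_0/(40d)$ for $N$ large, whence $\wh\sigma(x) \le 1/200 + 1/40 < 1/4$, contradicting $x \in \wh\cS_N$.

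It remains to show that the component of $\IR^d \setminus \Delta_N$ containing $z$ is bounded. I would take any continuous path $\varphi:[0,1] \to \IR^d$ with $\varphi(0)=z$ and $|\varphi(1)|_\infty > M+2$, and, after a generic small perturbation ensuring that the nearest $\wh{\IL}_0$-point of $N\varphi(\cdot)$ is well defined and transitions only between $|\cdot|_\infty$-neighbors in $\wh{\IL}_0$, build the discrete trajectory $x_0, x_1, \dots, x_n$ in $\wh{\IL}_0$. The endpoints satisfy $\wh\sigma(x_0) \le 1/200$ and $\wh\sigma(x_n) = 1$, the latter by (\ref{4.30}) since $x_n$ is deep outside $B_{\IZ^d}(0,(M+1)N)$ for large $N$. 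Since consecutive $\wh{\IL}_0$-neighbors differ by $\wh L_0/(100d)$ in $|\cdot|_\infty$, the iterated (\ref{4.29}) gives $|\wh\sigma(x_{i+1}) - \wh\sigma(x_i)| \le 1/100$, so the first index $i^*$ with $\wh\sigma(x_{i^*}) \ge 1/4$ satisfies $\wh\sigma(x_{i^*}) \in [1/4, 1/4 + 1/100] \subseteq [1/4, 3/4]$, giving $x_{i^*} \in \wh\cS_N$. At any time $t^*$ with $N\varphi(t^*)$ projecting to $x_{i^*}$, one has $|N\varphi(t^*) - x_{i^*}|_\infty \le \wh L_0/(200d) < \wh L_0/(50d)$, so $\varphi(t^*) \in B(x_{i^*}/N, \wh L_0/(50dN)) \subseteq \Delta_N$, contradicting that $\varphi$ avoided $\Delta_N$.

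The main obstacle I anticipate is the delicate bookkeeping of constants in the last step: the radius $\wh L_0/(50d)$ of the balls defining $\Delta_N$ in (\ref{4.33}) must be large enough to absorb the Voronoi slack $\wh L_0/(200d)$ when transferring the discrete crossing point back to the continuous path, yet the slow-variation estimate must still be strong enough to propagate $\wh\sigma < 1/4$ from $[Nz]$ out to any hypothetical point of $\wh\cS_N$ close to $Nz$ when excluding $z$ itself from $\Delta_N$. These constants are tight but compatible for $d \ge 3$ once the factors of $d$ are tracked carefully; no deeper subtleties appear beyond this.
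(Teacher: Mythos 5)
Your proof is correct and takes essentially the same approach as the paper: anchor $\wh\sigma$ at $0$ near the compact set via (\ref{4.31}) and at $1$ far outside via (\ref{4.30}), propagate the Lipschitz estimate (\ref{4.29}) along a discretization of any escaping path to find a point of $\wh\cS_N$, and transfer that to a hit of $\Delta_N$; the exclusion of $z$ itself from $\Delta_N$ coincides with the paper's sub-claim (\ref{4.36}). The one bookkeeping difference is the discretization: the paper constructs a $*$-path $(y_i)$ on $\IZ^d$ by a first-exit-time induction and then locates a nearby point of $\wh\IL_0$ (its sub-claim (\ref{4.37})), while you project directly onto $\wh\IL_0$ via nearest points with a ``generic small perturbation'' to avoid Voronoi degeneracies --- both work, though the paper's construction is self-contained and sidesteps the perturbation step.
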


\begin{proof}
By (\ref{4.30}), we see that for large $N$, $\wh{\sigma}(x) = 1$, when $|x|_\infty > (M+2)N$, and (\ref{4.34}) follows. To prove (\ref{4.35}), we will first show that
\begin{equation}\label{4.36}
\mbox{for large $N$, on $\cD^u_N$ the compact set in (\ref{4.35}) does not intersect $\Delta_N$}.
\end{equation}

\n
Indeed, otherwise scaling up by $N$ and choosing some point in $\IZ^d$ within $| \cdot |_\infty$-distance $1$ of the intersection, we could find $y \in \IZ^d$ with $B_{\IZ^d}(y,\wh{L}_0 + L_0) \subseteq A_N$ and within $|\cdot|_\infty$-distance $(\frac{1}{50d} \;\wh{L}_0 + 1)$ of $\wh{\cS}_N$. This would imply $\wh{\sigma}(y) = 0$ by (\ref{4.31}). But also by (\ref{4.32}) and (\ref{4.29}) that $\wh{\sigma}(y) \ge \frac{1}{4} - d(\frac{1}{50d} \,\wh{L}_0 + 1) \,\frac{1}{\wh{L}_0} \ge \frac{1}{8}$, a contradiction. This proves the claim (\ref{4.36}).

\medskip
As a next step in the proof of (\ref{4.35}), we will show that
\begin{equation}\label{4.37}
\begin{array}{l}
\mbox{for large $N$, on $\cD_N^u$, any continuous path $\psi(\cdot)$: $[0,1] \r \IR^d$, such that}
\\
\mbox{$\psi(0)$ is within $|\cdot |_\infty$-distance $1$ of $\{x \in A_N$: $B_{\IZ^d} (x,\wh{L}_0 + L_0) \subseteq A_N\}$ and}
\\
\mbox{$|\psi(1)|_\infty \ge (M+2)N$, comes within distance  $\frac{1}{50d} \,\wh{L}_0$ of $\wh{\cS}_N$}.
\end{array}
\end{equation}

\medskip\n
This will imply that any continuous path in $\IR^d$ starting in the compact set in (\ref{4.35}) and with end point of $|\cdot |_\infty$-norm at least $M+2$ necessarily meets $\Delta_N$. Together with (\ref{4.36}), this will complete the proof of (\ref{4.35}). There remains to prove (\ref{4.37}).

\medskip
Given $\psi(\cdot)$ as in (\ref{4.37}), we can construct a $\IZ^d$-valued $*$-path $y_i, 0 \le i \le \ell$ (i.e. $|y_{i+1} - y_i|_\infty = 1$, for $0 \le i < \ell$) such that $y_0 \in \{x \in A_N$; $B_{\IZ^d} (x,\wh{L}_0 + L_0) \subseteq A_N\}$ and $|y_\ell|_\infty > (M+1) N + \wh{L}_0 + L_0$ and $\{y_0,\dots,y_\ell\}$ is contained in the closed $1$-neighborhood of $\psi ([0,1]$) for the $|\cdot |_\infty$-distance. Indeed, one constructs by induction a non-decreasing sequence of times $t_i \in [0,1]$ and points $y_i, 1 \le i \le \ell$, so that $t_i$ is the first time after $t_{i-1}$ (with $t_0 = 0$) when the continuous path $\psi$ moves at $| \cdot |_\infty$-distance $1$ from the current point $y_{i-1}$ of the sequence to choose the next $y_i$ in $\IZ^d$ so that $|\psi(t_i) - y_i|_\infty \le \frac{1}{2}$ and $|y_i - y_{i-1}|_\infty = 1$. The procedure stops after finitely many steps (by the continuity of $\psi$) with a point $y_\ell$ that satisfies $|y_\ell - \psi(1)|_\infty \le 1$ (whence $|y_\ell |_\infty > (M+1) \,N + \wh{L}_0 + L_0$).

\medskip
It now follows from (\ref{4.30}) and (\ref{4.31}) that $\wh{\sigma}(y_0) = 0$ and $\wh{\sigma}(y_\ell) = 1$. In addition, $|y_{i+1} - y_i|_1 \le d$, for $0 \le i < \ell$, and by (\ref{4.29}), we find that necessarily for some $0 \le j \le \ell$, one has $|\wh{\sigma}(y_j) - \frac{1}{2}| \le \frac{d}{\wh{L}_0}$.

\medskip
For large $N$, if we now choose $\wh{y} \in \wh{\IL}_0$ such that $|\wh{y} - y_j|_\infty \le \frac{1}{100d} \, \wh{L}_0$, cf.~(\ref{4.20}), we find that
\begin{equation}\label{4.38}
\Big|\wh{\sigma}(\wh{y}) - \mbox{\f $\dis\frac{1}{2}$} \Big| \stackrel{(\ref{4.29})}{\le}  \mbox{\f $\dis\frac{1}{\wh{L}_0}$} \;|\wh{y} - y_j |_1 +  \mbox{\f $\dis\frac{d}{\wh{L}_0}$} \le \mbox{\f $\dis\frac{1}{100}$}  + \mbox{\f $\dis\frac{d}{\wh{L}_0}$} < \mbox{\f $\dis\frac{1}{4}$}.
\end{equation}

\n
This shows that $\wh{y}$ belongs to $\wh{\cS}_N$, cf.~(\ref{4.32}). In addition, $\psi(\cdot)$ comes within $|\cdot|_\infty$-distance $\frac{1}{100d} \,\wh{L}_0 + 1 \le \frac{1}{50d} \, \wh{L}_0$ of $\wh{y}$, and thus of $\wh{\cS}_N$. This yields (\ref{4.37}) and concludes the proof of Lemma \ref{lem4.3}.
\end{proof}

We are now on our way to introduce a coarse graining of the event $\cD^u_N \backslash \cB_N$ (we refer to (\ref{4.22}) for the definition of the ``bad'' event $\cB_N$), see (\ref{4.42}), (\ref{4.43}) below. As a next step, we extract a (measurable random) subset $\wt{\cS}_N$ of the (measurable random) finite set $\wh{\cS}_N$ such that:
\begin{equation}\label{4.39}
\begin{array}{l}
\mbox{$\wt{\cS}_N$ is a maximal subset of $\wh{\cS}_N$ such that the $B_{\IZ^d} (x, 2 \wh{L}_0)$, $x \in \wt{\cS}_N$,}
\\
\mbox{are pairwise disjoint}. 
\end{array}
\end{equation}

\n
For any $x \in \wt{\cS}_N$, we have $\wh{\sigma}(x) \in [\frac{1}{4}, \frac{3}{4}]$ by (\ref{4.32}). By the isoperimetric controls (A.3) - (A.6), p.~480-481 of \cite{DeusPisz96}, we have a projection $\wt{\pi}_x$ on the hyperplane of points with vanishing $\wt{i}_x$-coordinate, such that the $\wt{\pi}_x$-image of the points in $B_{\IZ^d}(x,\wh{L}_0) \backslash \cU^1$ having a neighbor in $B_{\IZ^d}(x,\wh{L}_0) \cap \cU^1$ has cardinality at least $c \, \wh{L}_0^{d-1}$ (see (\ref{4.27}) for notation). Any such point in $B_{\IZ^d} (x, \wh{L}_0) \backslash \cU^1$, which is a neighbor of a point in $B_{\IZ^d}(x, \wh{L}_0) \cap \cU^1$, belongs to a (uniquely defined) $L_0$-box $B_z$, $z \in \IL_0$, which is not contained in $B_{\IZ^d}(0,(M+1)N)^c$ (otherwise the point would belong to $\cU^1$), but also cannot be both good$(\alpha,\beta,\gamma)$ and with $N_u(D_z) < \beta\, {\rm cap}(D_z)$ (otherwise the point would again belong to $\cU^1$, since a neighboring $L_0$-box is contained in $\cU^1$). 

\medskip
As a result, we see that for large $N$,
\begin{equation}\label{4.40}
\begin{array}{l}
\mbox{for each $x \in \wt{\cS}_N$, there is a collection of $L_0$-boxes intersecting $B_{\IZ^d}(x,\wh{L}_0)$}
\\
\mbox{with $\wt{\pi}_x$-projection at mutual distance $\ge \ov{K}\,L_0$ (where $\ov{K} = 2 K + 3)$,}
\\
\mbox{and cardinality at least $(\frac{c}{K} \; \frac{\wh{L}_0}{L_0})^{d-1}$, such that each $B_z$ in the collection}
\\
\mbox{is either bad$(\alpha,\beta,\gamma)$ or $N_u(D_z) \ge \beta\, {\rm cap}_{\IZ^d}(D_z)$}.
\end{array}
\end{equation}

\n
Now for large $N$, on the complement of the event $\cB_N$ in (\ref{4.22}), in each coordinate direction, the number of columns of $L_0$-boxes in $B_{\IZ^d} (0,10(M+1)N)$ that contain a bad$(\alpha,\beta,\gamma)$ $L_0$-box is at most $\rho(L_0) (\frac{N_{L_0}}{L_0})^{d-1} \le (\frac{c}{2K} \; \frac{\wh{L}_0}{L_0})^{d-1}$ by (\ref{4.21}). This observation combined with (\ref{4.40}) shows that
\begin{equation}\label{4.41}
\begin{array}{l}
\mbox{for large $N$, on the event $\wt{\cD}^u_N = \cD_N^u \backslash \cB_N$, for each $x \in \wt{\cS}_N$ there is a}
\\
\mbox{collection $\wt{\cC}_x$ of $L_0$-boxes intersecting $B_{\IZ^d} (x,\wh{L}_0)$ with $\wt{\pi}_x$-projection}
\\
\mbox{at mutual distance at least $\ov{K} \,L_0$ and cardinality $[(\frac{c'}{K} \;\frac{\wh{L}_0}{L_0})^{d-1}]$, such}
\\
\mbox{that for each $z \in \wt{\cC}_x$, $B_z$ is good$(\alpha,\beta,\gamma)$ and $N_u(D_z) \ge \beta \, {\rm cap}_{\IZ^d} (D_z)$.}
\end{array}
\end{equation}

\n
Thus, for large $N$, we can define a random variable
\begin{equation}\label{4.42}
\kappa_N = (\wh{\cS}_N, \wt{\cS}_N, (\wt{\pi}_x, \wt{\cC}_x)_{x \in \wt{\cS}_N}) \;\;\mbox{on} \;\; \wt{\cD}^u_N (= \cD^u_N \backslash \cB_N)
\end{equation}
with the above mentioned properties. As we now explain
\begin{equation}\label{4.43}
\mbox{the set $\cK_N$ of possible values of $\kappa_N$ has cardinality $|\cK_N| = \exp\{o(N^{d-2})\}$}.
\end{equation}

\medskip\n
Indeed, for large $N$, by (\ref{4.34}), (\ref{4.39}), there are at most $2^{2| \wh{\IL}_0 \cap B_{\IZ^d}(0,(M+2)N)|}$ possible choices for the couple $(\wh{\cS}_N, \wt{\cS}_N)$, and this quantity is at most 
\begin{equation*}
\exp\Big\{c(M^d + 1) \Big(\mbox{\f $\dis\frac{N}{\wh{L}_0}$}\Big)^d\Big\} \le \exp\{c(M^d + 1) \,\gamma_N^{-\frac{d}{2}}\}.
\end{equation*}
Then, for any such choice of $\wh{\cS}_N$ and $\wt{\cS}_N$, for any $x \in \wt{\cS}_N$, we can choose $(\wt{\pi}_x, \wt{\cC}_x)$ in at most $d(c \,\frac{\wh{L}_0}{L_0})^{(c \frac{\wh{L}_0}{L_0})^{d-1}}$ ways, and hence the number of choices for $(\wt{\pi}_x,\wt{\cC}_x)_{x \in \wt{\cS}_N}$ is at most
\begin{equation}\label{4.44}
\exp\Big\{c'(M^d + 1) \Big(\mbox{\f $\dis\frac{N}{\wh{L}_0}$}\Big)^d \Big(\log d + \Big(\mbox{\f $\dis\frac{\wh{L}_0}{L_0}$}\Big)^{d-1} \log \Big(c \mbox{\f $\dis\frac{\wh{L}_0}{L_0}$}\Big)\Big)\Big\}.
\end{equation}
Note that for large $N$, one has
\begin{equation*}
\begin{array}{l}
\Big(\mbox{\f $\dis\frac{N}{\wh{L}_0}$}\Big)^d \;  \Big(\mbox{\f $\dis\frac{\wh{L}_0}{L_0}$}\Big)^{d-1} \log \Big(\mbox{\f $\dis\frac{\wh{L}_0}{L_0}$}\Big) = \Big(\mbox{\f $\dis\frac{N}{\wh{L}_0}$}\Big)  \; \Big(\mbox{\f $\dis\frac{N}{L_0}$}\Big)^{d-1} \log  \Big(\mbox{\f $\dis\frac{\wh{L}_0}{L_0}$}\Big) \stackrel{(\ref{4.19})}{\le}
\\[2ex]
c \, \gamma_N^{-\frac{1}{2}} \gamma_N \; \mbox{\f $\dis\frac{N^{d-2}}{\log N}$} \;\log N = c \,\gamma_N^{\frac{1}{2}} \,N^{d-2}.
\end{array}
\end{equation*}

\medskip\n
We thus find that for large $N$, the number $|\cK_N|$ of possible values of $\kappa_N$ is at most $\exp\{c(M^d + 1)(\gamma_N^{-d/2} + \gamma_N^{1/2} \,N^{d-2})\} \stackrel{(\ref{4.18}) i), iii), iv)}{=} \exp\{o(N^{d-2})\}$, whence (\ref{4.43}).

\medskip
For large $N$, the coarse graining of $\wt{\cD}^u_N$ will specifically correspond to the partition:
\begin{align}
\wt{\cD}^u_N =  & \mbox{\f $\dis\bigcup\limits_{\kappa \in \cK_N}$} \cD_{N,\kappa}, \;\mbox{where $\cD_{N,\kappa} = \wt{\cD}^u_N \cap \{ \kappa_N = \kappa\}$, for $\kappa \in \cK_N$} \label{4.45}
\\
&\mbox{(and $|\cK_N| = \exp\{ o(N^{d-2})\}$)}. \nonumber
\end{align}
We can now combine the super-exponential bound in (\ref{4.23}) and the above coarse graining to find that
\begin{equation}\label{4.46}
\begin{split}
\limsup\limits_N \; \dis\frac{1}{N^{d-2}} \log \IP[\cD_N^u] \stackrel{(\ref{4.23})}{\le} & \limsup\limits_N \; \mbox{\f $\dis\frac{1}{N^{d-2}}$} \; \log \IP [\wt{\cD}^u_N]
\\
\stackrel{(\ref{4.45})}{\le} & \limsup\limits_N \;\sup\limits_{\kappa \in \cK_N} \; \mbox{\f $\dis\frac{1}{N^{d-2}}$}\; \log \IP[\cD_{N,\kappa}].
\end{split}
\end{equation}

\medskip\n
To each $\kappa \in \cK_N$, we will associate a ``segmentation'' (corresponding to $U_0$ or $S$ in (\ref{4.49}) below) and a ``porous interface'' (corresponding to $\Sigma$  in (\ref{4.48}) below), as follows. If $\kappa = (\wh{\cS}, \wt{\cS}, (\wt{\pi}_x, \wt{\cC}_x)_{x \in \wt{\cS}}) \in \cK_N$, we define
\begin{equation}\label{4.47}
C = \mbox{\f $\dis\bigcup\limits_{x \in \wt{\cS}}$} \; \mbox{\f $\dis\bigcup\limits_{z \in \wt{\cC}_x}$} \; B_z \; (\subseteq \IZ^d),
\end{equation}
its ``scaled $\IR^d$-filling'':
\begin{equation}\label{4.48}
\Sigma = \mbox{\f $\dis\frac{1}{N}$} \; \Big(\mbox{\f $\dis\bigcup\limits_{x \in \wt{\cS}}$} \; \mbox{\f $\dis\bigcup\limits_{z \in \wt{\cC}_x}$}  \;z + [0,L_0]^d\Big) \; (\subseteq \IR^d),
\end{equation}

\vspace{-3ex}\n
as well as
\begin{equation}\label{4.49}
\begin{split}
U_1 & = \; \mbox{the unbounded component of $\IR^d \backslash \Big\{\mbox{\f $\dis\frac{1}{N}$} \Big(\bigcup\limits_{x \in \wh{\cS}} \;B\Big(x , \mbox{\f $\dis\frac{1}{50d}$} \;\wh{L}_0\Big)\Big)\Big\}$}
\\[-1ex]
U_0 & = \IR^d \backslash U_1, \; S = \partial U_0 = \partial U_1.
\end{split}
\end{equation}

\n
Note that on $\cD_{N,\kappa}$ the open set $U_1$ coincides with the unbounded component of $\IR^d \backslash \Delta_N$ in the notation of (\ref{4.33}). Thus, by Lemma \ref{lem4.3}, for large $N$ and all $\kappa$ in $\cK_N$, on $\cD_{N,\kappa}$, the compact set $\{z \in A; d(z,\partial A) \ge \frac{\wh{L}_0 + L_0 + 1}{N}\}$ in (\ref{4.35}) of Lemma \ref{lem4.3} does not intersect $U_1$. We then consider a compact subset $A'$ of $\mathring{A}$ and some $\ell_*(A,A') \ge 0$ such that, for large $N$ and all $\kappa \in \cK_N$, 

\begin{equation}\label{4.50}
d(A',U_1) \ge 2^{-\ell_*} \;\mbox{(and hence $U_0 \in \cU_{\ell_*, A'}$, see (\ref{2.6}))}.
\end{equation}

\n
In addition, by (\ref{4.39}), (\ref{4.41}), and the definition of $\Sigma$ in (\ref{4.48}), that for large $N$ and all $\kappa \in \cK_N$ and $x \in \wt{\cS}$, ${\rm cap}( \mbox{\f $\dis\bigcup\,\!\!_{z \in \wt{\cC}_x}$} \; z + [0,L_0]^d ) \ge c(K) \wh{L}_0^{d-2}$ (using a projection argument, see \cite{Matt95}, p.~126), and
\begin{equation}\label{4.51}
P_x[H_\Sigma < \tau_{10 \,\frac{\wh{L}_0}{N}}] \ge c(K), \;\mbox{for all $x \in S$ ($=\partial U_0 = \partial U_1$)}.
\end{equation}

\n
This will enable us to apply Corollary \ref{cor3.4} of the previous section. We now come back to (\ref{4.46}), and note that by the exponential bound (\ref{4.14}), as well as our choice of $\wt{\cC}_x$ in (\ref{4.41}) and the notation (\ref{4.47}),
\begin{equation}\label{4.52}
\begin{array}{l}
\limsup\limits_N \; \mbox{\f $\dis\frac{1}{N^{d-2}}$} \;\log \IP[\cD^u_N] \le 
\\
-\Big(\sqrt{\gamma} - \dis\frac{\sqrt{u}}{1 - \wt{\ve}(\sqrt{\frac{\ov{u}}{u}} - 1)}\Big) (\sqrt{\gamma} - \sqrt{u}) \;\liminf\limits_N \;  \inf\limits_{\kappa \in \cK_N} \; \mbox{\f $\dis\frac{1}{N^{d-2}}$} {\rm cap}_{\IZ^d} \;(C).
\end{array}
\end{equation}

\n
Taking a liminf over $K$ and using Proposition \ref{propA.1}, we find with $\Sigma$ as in (\ref{4.48}):
\begin{equation}\label{4.53}
\begin{array}{l}
\limsup\limits_N \; \mbox{\f $\dis\frac{1}{N^{d-2}}$} \;\log \IP[\cD^u_N] \le 
\\
- \Big(\sqrt{\gamma} - \dis\frac{\sqrt{u}}{1 - \wt{\ve}(\sqrt{\frac{\ov{u}}{u}} - 1)}\Big) (\sqrt{\gamma} - \sqrt{u}) \;\underset{K}{\underline{\lim}} \; \underset{N}{\underline{\lim}}  \; \inf\limits_{\kappa \in \cK_N} \;\mbox{\f $\dis\frac{1}{d}$} \;{\rm cap}(\Sigma).
\end{array}
\end{equation}

\n
We can now take (\ref{4.50}), (\ref{4.51}) into account, and by Corollary \ref{cor3.4}, we find that
\begin{equation}\label{4.54}
\liminf\limits_N \;  \;\inf\limits_{\kappa \in \cK_N} \;  {\rm cap}(\Sigma) \ge {\rm cap}(A').
\end{equation}

\n
Inserting this inequality in the last expression of (\ref{4.53}), and letting successively $\wt{\ve}$ tend to zero and $\alpha,\beta,\gamma$ tend to $\ov{u}$ yields
\begin{equation}\label{4.55}
\limsup\limits_N \; \mbox{\f $\dis\frac{1}{N^{d-2}}$} \;  \log \IP[\cD^u_N] \le - \mbox{\f $\dis\frac{1}{d}$} \;(\sqrt{\ov{u}} - \sqrt{u})^2\,  {\rm cap}(A').
\end{equation}

\n
Letting $A'$ increase to $\mathring{A}$ now yields (\ref{4.10}), by Proposition 1.13, p.60 of \cite{PortSton78}. This completes the proof of Theorem \ref{theo4.1}.

 \hfill $\square$

\medskip
In the proof of the above Theorem \ref{theo4.1}, we applied Corollary \ref{cor3.4} with a fixed $\ell_*$, which does not vary with $N$, see (\ref{4.50}). We refer to Remark \ref{rem4.5} 3) below for an application where $\ell_*$ depends on $N$.

\medskip
Theorem \ref{theo4.1} has an immediate application to a similar asymptotic upper bound in the case of disconnection by a simple random walk. We first introduce some notation. We denote by $Z_n$, $n \ge 0$, the canonical simple random walk on $\IZ^d$, (recall $d \ge 3$), and by $P_x^{\IZ^d}$ the canonical law of the walk starting at $x \in \IZ^d$. We write $\cI = \{Z_n; n \ge 0\} \subseteq \IZ^d$ for the set of points visited by the walk and $\cV = \IZ^d \backslash \cI$ for its complement. With $A_N$ and $S_N$ as in (\ref{4.6}), (\ref{4.7}), and $N \ge N_0(A,M)$ such that $A_N \subseteq B_{\IZ^d}(0,MN) \backslash S_N$, we consider the disconnection event where there is no path in $\cV$ between $A_N$ and $S_N$,
\begin{equation}\label{4.56}
\cD_N = \{A_N \stackrel{\cV}{\longleftrightarrow} \hspace{-3ex} \mbox{\f $/$} \quad S_N\}.
\end{equation}
We have
\begin{cor}\label{cor4.4}
Assume that $A$ is a compact subset of $\IR^d$ and $M > 0$ satisfies (\ref{4.5}), then for any $x \in \IZ^d$,
\begin{equation}\label{4.57}
\limsup\limits_N \; \mbox{\f $\dis\frac{1}{N^{d-2}}$} \;\log P_x^{\IZ^d} [\cD_N] \le - \mbox{\f $\dis\frac{1}{d}$} \;\ov{u} \,{\rm cap}(\mathring{A}).
\end{equation}
\end{cor}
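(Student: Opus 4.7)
\medskip
My plan is to deduce Corollary \ref{cor4.4} from Theorem \ref{theo4.1} by a direct coupling, making precise the heuristic ``simple random walk corresponds to random interlacements at level $u \to 0^+$''.

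For fixed $u \in (0,\ov{u})$ and $x \in \IZ^d$, I would construct on a common probability space a coupling of the SRW $(Z_n)_{n \ge 0}$ from $x$ with interlacements $\omega^u$ at level $u$ as follows: sample $\omega^u$; let $E_x$ be the event that at least one interlacement trajectory hits $x$, which has some strictly positive probability $\alpha_u$ independent of $N$ (in fact $\alpha_u = 1 - \exp\{-u\, g^{\IZ^d}(0,0)\}$, with $g^{\IZ^d}$ the SRW Green function); on $E_x$, select one such trajectory (for instance the one of smallest label in the Poisson construction of interlacements), and declare $(Z_n)_{n \ge 0}$ to be its forward segment starting from the first visit to $x$. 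Since the forward segment after a specified hitting time of $x$ of a doubly-infinite trajectory in the interlacement construction has the law of simple random walk from $x$, this produces a valid realization of $(Z_n)_{n \ge 0}$ on $E_x$, and by construction the SRW range $\cI = \{Z_n : n \ge 0\}$ satisfies $\cI \subseteq \cI^u$ deterministically.

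Under this coupling one has $\cV \supseteq \cV^u$, so any nearest-neighbor $\cV$-path between $A_N$ and $S_N$ is also a $\cV^u$-path, whence $\cD_N \subseteq \cD_N^u$. It follows that
\begin{equation*}
P_x^{\IZ^d}[\cD_N] \;=\; \IP[\cD_N \mid E_x] \;\le\; \IP[\cD_N^u \mid E_x] \;\le\; \alpha_u^{-1}\, \IP[\cD_N^u].
\end{equation*}
Taking $\frac{1}{N^{d-2}} \log$ and then $\limsup_N$, the $N$-independent multiplicative factor $\alpha_u^{-1}$ drops out, and Theorem \ref{theo4.1} yields
\begin{equation*}
\limsup_N \frac{1}{N^{d-2}} \log P_x^{\IZ^d}[\cD_N] \;\le\; -\frac{1}{d}(\sqrt{\ov{u}} - \sqrt{u})^2 \, {\rm cap}(\mathring{A}).
\end{equation*}

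Since $u \in (0,\ov{u})$ is arbitrary and $(\sqrt{\ov{u}} - \sqrt{u})^2 \to \ov{u}$ as $u \to 0^+$, letting $u$ tend to $0^+$ gives the desired bound (\ref{4.57}). I do not foresee a serious obstacle: the argument is essentially bookkeeping once the coupling is in place, and the only minor technical point is to verify that the forward segment from $x$ of a chosen interlacement trajectory hitting $x$ indeed has the law of SRW started at $x$, which follows from the standard Poisson point process construction of random interlacements on path space and the strong Markov property.
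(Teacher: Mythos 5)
Your argument is correct and follows essentially the same route as the paper: the paper likewise invokes a coupling of $\cI^u$ under $\IP[\,\cdot\mid x\in\cI^u]$ with the range $\cI$ of simple random walk from $x$ so that $\cI\subseteq\cI^u$ almost surely, then applies Theorem~\ref{theo4.1} and lets $u\downarrow 0$. (A harmless slip: $\IP[x\in\cI^u]=1-\exp\{-u\,{\rm cap}_{\IZ^d}(\{x\})\}=1-\exp\{-u/\wt{g}(0,0)\}$, involving the reciprocal of the Green function rather than $1-\exp\{-u\,\wt{g}(0,0)\}$; only its strict positivity and $N$-independence enter the argument.)
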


\begin{proof}
The proof is similar to that of Corollary 6.4  of \cite{Szni17} (or that of Corollary 7.3 of \cite{Szni15}) and relies on the fact that one can find a coupling $\ov{P}$ of $\cI^u$ under $\IP[\cdot | x \in \cI^u]$ and of $\cI$ under $P_x$, so that $\ov{P}$-a.s., $\cI \subseteq \cI^u$. The claim (\ref{4.57}) then follows by the application of Theorem \ref{theo4.1} to $u \in (0,\ov{u})$, and then letting $u \r 0$ (see the above mentioned references for details).
\end{proof}

\begin{remark}\label{rem4.5} \rm 1) The approach developed in this section should remain pertinent in the context of level-set percolation of the Gaussian free field on $\IZ^d$, $d \ge 3$. Plausibly, one should be able to adapt the strategy of the proof of Theorem \ref{theo4.1} to instead derive asymptotic upper bounds on the probability that the excursion-set of the Gaussian free field {\it below} level $\alpha$ disconnects the macroscopic body $A_N$ from $S_N$, when $\alpha$ is such that the excursion-set of the Gaussian free field {\it above} $\alpha$ is in a strongly percolative regime (i.e.~$\alpha < \ov{h}$ in the notation of \cite{Szni15}). The case when $A_N$ is the discrete blow-up of a box centered at the origin was treated in \cite{Szni15}.

\bigskip\n
2) As mentioned in the introduction, it is plausible but presently open that the critical levels $\ov{u} \le u_* \le u_{**}$ actually coincide (incidentally, some progress towards a possible proof of the equality $u_* = u_{**}$ has been made in \cite{DumiRaouTass17}). If this is the case and $\ov{u} = u_* = u_{**}$, then when the compact set $A$ is regular in the sense that
\begin{equation}\label{4.58}
{\rm cap}(A) = {\rm cap}(\mathring{A}),
\end{equation}

\n
the asymptotic upper bounds in Theorem \ref{theo4.1} and Corollary \ref{cor4.4} respectively match the asymptotic lower bounds from \cite{LiSzni14} in the case of random interlacements, and from \cite{Li17} in the case of simple random walk and under (\ref{4.58})
\begin{align}
&\lim\limits_N \; \mbox{\f $\dis\frac{1}{N^{d-2}}$} \; \log \IP[\cD^u_N] = - \mbox{\f $\dis\frac{1}{d}$} \;(\sqrt{u}_* - \sqrt{u})^2 \,{\rm cap}(A), \;\mbox{when $0 < u < u_*$}, \label{4.59}
\intertext{and}
&\lim\limits_N \; \mbox{\f $\dis\frac{1}{N^{d-2}}$} \; \log \IP[\cD_N] = - \mbox{\f $\dis\frac{1}{d}$}  \; u_* \,{\rm cap}(A). \label{4.60}
\end{align}

\medskip\n
3) Assume that for each rationals $\alpha>\beta>\gamma$ in $(0,\ov{u})$, $\wt{\ve} \in (0,1)$  and integer $K \ge c(\alpha,\beta,\gamma,\wt{\ve})$ (in the notation below (\ref{4.11})) one chooses a positive sequence $\gamma_N$ as in (\ref{4.18}). Then, the proof of Theorem \ref{theo4.1} can straightforwardly be adapted to the situation where in place of $A_N$ in (\ref{4.6}) one considers the discrete blow-up $\wh{A}_N = (NA^{\delta_N}) \cap \IZ^d$ with $A^{\delta_N}$ the closed $\delta_N$-neighborhood in $|\cdot|_\infty$-distance of the compact set $A$, for a sequence $\delta_{N} \underset{N}{\longrightarrow} 0$ in such a fashion that $\delta_N / (\frac{\wh{L}_0}{N}) 
\underset{N}{\longrightarrow} \infty$ (i.e.~$\delta_N/\sqrt{\gamma}_N  \underset{N}{\longrightarrow} \infty$), for all above choices of $\gamma_N$. Such a sequence $\delta_N$ can for instance be constructed by a diagonal procedure. In essence, one only needs to replace $A$ by $A^{\delta_N}$ and $\partial A$ by $\partial (A^{\delta_N})$ in (\ref{4.35}), and then $A'$ by $A$ and $\ell_*$ by the smallest non-negative integer such that $2^{-\ell_*} \le \frac{1}{2} \, \delta_N$ in (\ref{4.50}) (so $\frac{\wh{L}_0}{N} / 2^{-\ell_*}$ tends to zero with $N$). If one denotes by $\wh{\cD}^u_N$ and $\wh{\cD}_N$ the disconnection events respectively corresponding to (\ref{4.9}) and (\ref{4.56}) with $\wh{A}_N$ in place of $A_N$, one obtains that for any compact set $A$ in $\IR^d$ and $M > 0$ such that (\ref{4.5}) holds
\begin{align}
&\limsup\limits_N \; \mbox{\f $\dis\frac{1}{N^{d-2}}$} \; \log \IP[\wh{\cD}^u_N] \le - \mbox{\f $\dis\frac{1}{d}$} \;(\sqrt{\ov{u}} - \sqrt{u})^2\, {\rm cap}(A), \;\mbox{when $0 < u < \ov{u}$}, \label{4.61}
\intertext{and}
&\limsup\limits_N \; \mbox{\f $\dis\frac{1}{N^{d-2}}$} \; \log P_x^{\IZ^d} [\wh{\cD}_N] \le - \mbox{\f $\dis\frac{1}{d}$}  \; \ov{u} \,{\rm cap}(A), \;\mbox{for $x \in \IZ^d$}. \label{4.62}
\end{align}

\hfill $\square$
\end{remark}

\appendix
\section{Appendix}
In this appendix we state and prove Proposition \ref{propA.1} that provides a uniform comparison between the discrete capacity of arbitrary finite unions of discrete $L$-boxes at mutual distance $KL$ with the Brownian capacity of their $\IR^d$-filling, when both $K$ and $L$ tend to infinity.

\medskip
We first introduce some notation and recall some facts. We write $\wt{g}(x,y)$, with $x,y \in \IZ^d$, for the Green function of the simple random walk on $\IZ^d$, $d \ge 3$. If we set $\wt{g}(x) = \wt{g}(0,x)$, $x \in \IZ^d$, and $g(x) = g(0,x)$, $x \in \IR^d$ (with $g(\cdot,\cdot)$ the Green function of Brownian motion), so that $\wt{g}(x,y) = \wt{g}(y-x)$, for $x,y \in \IZ^d$, and $g(x,y) = g(y-x)$, for $x,y \in \IR^d$, then one knows by  \cite{Lawl91}, p.~31, that
\begin{equation}\label{A.1}
\mbox{as $x \r \infty$ in $\IZ^d$,  $\wt{g}(x) \sim dg(x)$ \quad (and $g(x) = \frac{1}{2 \pi^{d/2}} \;\Gamma \big(\frac{d}{2} - 1\big) \,|x|^{2-d}$)}.
\end{equation}

\medskip\n
We consider integers $L \ge 1$ and $K \ge 100$ and recall the notation from (\ref{4.12}) concerning $L$-boxes: $B_z = z + [0,L)^d \cap \IZ^d$, for $z \in \IL (= L \IZ^d)$. For $\cC \subseteq \IL$, a non-empty finite subset of $\IL$, we write
\begin{equation}\label{A.2}
C = \mbox{\f $\dis\bigcup\limits_{z \in \cC}$} B_z \quad \mbox{(and as a shorthand $C = \bigcup\limits_{B \in \cC} B$)}.
\end{equation}
We denote by $\wh{B}_z$ the $\IR^d$-filling of $B_z$:
\begin{equation}\label{A.3}
\wh{B}_z = z + [0,L]^d (\subseteq \IR^d),
\end{equation}
and set
\begin{equation}\label{A.4}
\Gamma = \mbox{\f $\dis\bigcup\limits_{z \in \cC}$} \wh{B}_z \quad \mbox{(and write as a shorthand $\Gamma = \bigcup\limits_{B \in \cC} \wh{B}$)}.
\end{equation}

\n
Of special interest for this appendix is the case when the boxes in $\cC$ are at mutual distance at least $KL$, i.e.
\begin{equation}\label{A.5}
\mbox{$z \not= z'$ in $\cC$ implies that $|z - z'|_\infty \ge KL$.}
\end{equation}

\medskip\n
For $F$ finite subset of $\IZ^d$, we let $\wt{e}_F(\cdot)$ and $\wt{\rm cap}(F)$ stand for the respective equilibrium measure and capacity of $F$ attached to the discrete Green function $\wt{g}(\cdot,\cdot)$ (so $\wt{\rm cap}(F) = {\rm cap}_{\IZ^d}(F)$ in the notation of Section 4). One knows that when $K$ is large and (\ref{A.5}) holds, the equilibrium measure of $C$ ``conditioned on being in the $L$-box $B \in \cC$'', it is very close to the normalized equilibrium measure $\ov{\wt{e}}_B = \wt{e}_B / \wt{{\rm cap}}(B)$.

\medskip
More precisely, by Proposition 1.5 of \cite{Szni17}, for any $\delta \in (0,1)$, when $K \ge c(\delta)$, 
\begin{equation}\label{A.6}
\left\{ \begin{array}{l}
\mbox{for any $L \ge 1$, and any $\cC \subseteq \IL$ non-empty finite satisfying (\ref{A.5}),}
\\
\mbox{one has $(1 - \delta) \, \wt{\mu} \le \wt{e}_C \le (1 + \delta) \, \wt{\mu}$, where}
\\[1ex]
\mbox{$\wt{\mu}(y) = \dsl_{B \in \cC} \wt{e}_C(B) \,\ov{\wt{e}}_B(y)$ for $y \in \IZ^d$}.
\end{array}\right.
\end{equation}

\medskip\n
Finally, we recall that in the case of an $L$-box $B$ and its $\IR^d$-filling $\wh{B}$, one has the large $L$ equivalence of capacities, cf.~Lemma 2.2 of \cite{BoltDeus93} and \cite{Spit01}, p.~301, namely 
\begin{equation}\label{A.7}
a_L = d \; \dis\frac{\wt{{\rm cap}}(B)}{{\rm cap}(\wh{B})} \; \underset{L \r \infty}{\longrightarrow} 1.
\end{equation}

\n
The main object of this Appendix is the following strengthening of this asymptotic equivalence:

\begin{prop}\label{propA.1}
\begin{align}
& \liminf\limits_{K,L \r \infty} \;\;  \inf\limits_\cC \; d \; \dis\frac{\wt{{\rm cap}}(C)}{{\rm cap}(\Gamma)} \ge 1,\label{A.8}
\\[2ex]
& \limsup\limits_{K,L \r \infty} \;\;  \sup\limits_\cC \; d \; \dis\frac{\wt{{\rm cap}}(C)}{{\rm cap}(\Gamma)} \le 1,\label{A.9}
\end{align}

\medskip\n
where in (\ref{A.8}) and (\ref{A.9}), $\cC$ varies over the collection of non-empty finite subsets of $\IL (= L\IZ^d)$ satisfying (\ref{A.5}).
\end{prop}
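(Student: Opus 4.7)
The plan is to derive both bounds by the variational characterization of capacity, constructing trial probability measures on $\Gamma$ (for (\ref{A.9})) and on $C$ (for (\ref{A.8})) from the equilibrium data on the opposite side, and comparing their energies using (\ref{A.1}), (\ref{A.6}), and (\ref{A.7}). Fix $\delta > 0$ and take $K, L$ large enough that, by (\ref{A.1}), $|\wt{g}(x)/(dg(x)) - 1| \le \delta$ whenever $|x|_\infty \ge (K-1)L$, and by (\ref{A.7}), $|a_L - 1| \le \delta$. Both $g$ and $\wt{g}$ also vary slowly on scale $L$ when the argument has $|\cdot|_\infty$-norm at least $(K-1)L$, so that averaging either Green function over a box $\wh{B}$ (against $\ov{e}_{\wh{B}} := e_{\wh{B}}/{\rm cap}(\wh{B})$) or over $B$ (against $\ov{\wt{e}}_B$), with the second argument in a different box, produces a value within a factor $1 + O(1/K)$ of $g(z_B, z_{B'})$ (or $d g(z_B, z_{B'})$ in the $\wt{g}$ case).

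For (\ref{A.9}), set $\nu = \wt{{\rm cap}}(C)^{-1}\sum_{B \in \cC} \wt{e}_C(B)\,\ov{e}_{\wh{B}}$, a probability measure on $\Gamma$, so that $1/{\rm cap}(\Gamma) \le \iint g\,d\nu\,d\nu$. Splitting into diagonal and off-diagonal pieces, the diagonal reads $\sum_B \wt{e}_C(B)^2/{\rm cap}(\wh{B}) = (a_L/d)\sum_B \wt{e}_C(B)^2/\wt{{\rm cap}}(B)$ by (\ref{A.7}), and for $B \ne B'$ the off-diagonal satisfies
\begin{equation*}
\iint g\,d\ov{e}_{\wh{B}}\,d\ov{e}_{\wh{B'}} \le \frac{1+O(\delta)}{d}\iint \wt{g}\,d\ov{\wt{e}}_B\,d\ov{\wt{e}}_{B'}
\end{equation*}
via $\wt{g} \sim dg$ together with slow variation. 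Hence
\begin{equation*}
\iint g\,d\nu\,d\nu \le \frac{1+O(\delta)}{d\,\wt{{\rm cap}}(C)^2}\iint \wt{g}\,d\wt{\mu}\,d\wt{\mu}, \qquad \wt{\mu} = \sum_B \wt{e}_C(B)\,\ov{\wt{e}}_B.
\end{equation*}
By (\ref{A.6}), $\wt{\mu}$ and $\wt{e}_C$ differ pointwise by a factor $1\pm\delta$, so $\iint \wt{g}\,d\wt{\mu}\,d\wt{\mu} \le (1-\delta)^{-2}\iint \wt{g}\,d\wt{e}_C\,d\wt{e}_C = (1-\delta)^{-2}\wt{{\rm cap}}(C)$, using the standard identity $\iint \wt{g}\,d\wt{e}_C\,d\wt{e}_C = \wt{{\rm cap}}(C)$ (since the equilibrium potential of $C$ equals $1$ on the support of $\wt{e}_C$). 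Thus $\iint g\,d\nu\,d\nu \le (1+O(\delta))/(d\,\wt{{\rm cap}}(C))$, yielding $d\,\wt{{\rm cap}}(C)/{\rm cap}(\Gamma) \le 1 + O(\delta)$; letting $\delta \to 0$ after $K, L \to \infty$ gives (\ref{A.9}).

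For (\ref{A.8}), the symmetric construction $\wt{\nu} = {\rm cap}(\Gamma)^{-1}\sum_B e_\Gamma(\wh{B})\,\ov{\wt{e}}_B$ is a probability measure on $C$, giving $1/\wt{{\rm cap}}(C) \le \iint \wt{g}\,d\wt{\nu}\,d\wt{\nu}$. The same two-step expansion with the roles of $\wt{g}$ and $g$ reversed gives $\iint \wt{g}\,d\wt{\nu}\,d\wt{\nu} \le d(1+O(\delta))\iint g\,d\mu'\,d\mu'/{\rm cap}(\Gamma)^2$, with $\mu' = \sum_B e_\Gamma(\wh{B})\,\ov{e}_{\wh{B}}$. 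Here I do not invoke a continuous analogue of (\ref{A.6}): the variational lower bound $1/{\rm cap}(\wh{B}) \le \iint g\,d\rho\,d\rho$ applied to the probability measure $\rho = e_\Gamma|_{\wh{B}}/e_\Gamma(\wh{B})$ yields $e_\Gamma(\wh{B})^2/{\rm cap}(\wh{B}) \le \iint_{\wh{B}\times \wh{B}} g\,de_\Gamma\,de_\Gamma$, so the diagonal of $\iint g\,d\mu'\,d\mu'$ is dominated by that of $\iint g\,de_\Gamma\,de_\Gamma = {\rm cap}(\Gamma)$, while the off-diagonals of both are within a factor $1+O(1/K)$ of $\sum_{B\ne B'} e_\Gamma(\wh{B}) e_\Gamma(\wh{B'}) g(z_B, z_{B'})$ by slow variation. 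Hence $\iint g\,d\mu'\,d\mu' \le (1+O(1/K))\,{\rm cap}(\Gamma)$, and $d\,\wt{{\rm cap}}(C)/{\rm cap}(\Gamma) \ge 1 - O(\delta)$ follows.

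The main technical obstacle is to secure uniformity in the family $\cC$: the error bounds must not degrade as $|\cC|$ grows. This works because all per-pair errors (arising from (\ref{A.1}), (\ref{A.7}), and the slow variation of $g$ on scales much smaller than $(K-1)L$) are absorbed linearly against the product weights $\wt{e}_C(B)\wt{e}_C(B')$ or $e_\Gamma(\wh{B})e_\Gamma(\wh{B'})$, whose total mass is exactly $\wt{{\rm cap}}(C)^2$ or ${\rm cap}(\Gamma)^2$, matching the normalization in the trial-measure energies and yielding bounds that are uniform over $\cC$.
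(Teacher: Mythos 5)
Your proof is correct, and it reaches the result by a genuinely different route than the paper. The paper works with a single auxiliary measure $\mu = d\sum_{B\in\cC}\wt e_C(B)\,\ov e_{\wh B}$ on $\Gamma$, of total mass $d\,\wt{\rm cap}(C)$, derives two-sided pointwise bounds on its potential $\int g(x,\cdot)\,d\mu$ at every $x\in\Gamma$ (using a quantity $\eta_K$ encoding (\ref{A.1}), the constant $a_L$ from (\ref{A.7}), and the pointwise comparison (\ref{A.6}) between $\wt\mu$ and $\wt e_C$), and then integrates these pointwise bounds against $e_\Gamma$ — using the symmetry of $g$ and the identity $\int h_\Gamma\,d\mu = \mu(\IR^d)$ — to obtain both (\ref{A.8}) and (\ref{A.9}) from a single computation. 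You instead invoke the energy variational characterization of capacity twice, with two separate trial probability measures: one on $\Gamma$ built from $\wt e_C$-weights for (\ref{A.9}), and one on $C$ built from $e_\Gamma$-weights for (\ref{A.8}). The ingredients are the same, but the lower bound (\ref{A.8}) forces you to confront a point that the paper's route sidesteps automatically: there is no continuous analogue of (\ref{A.6}) controlling how $e_\Gamma$ spreads over the boxes $\wh B$. Your workaround — the variational inequality $e_\Gamma(\wh B)^2/{\rm cap}(\wh B)\le \iint_{\wh B\times\wh B}g\,de_\Gamma\,de_\Gamma$ for the diagonal contribution, plus matching the off-diagonal contributions through the common reference $g(z_B,z_{B'})$ by slow variation of $g$ on scale $L\ll KL$ — is valid and handles this cleanly. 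In summary: the paper's potential computation is more economical (one estimate, both directions, no diagonal/off-diagonal decomposition), while yours makes the variational mechanism and the separate roles of (\ref{A.1}), (\ref{A.6}), (\ref{A.7}) more explicit. Both achieve the needed uniformity in $\cC$ for the same underlying reason, which you correctly identify at the end: the per-pair multiplicative errors weight sums that total exactly $\wt{\rm cap}(C)^2$ or ${\rm cap}(\Gamma)^2$, matching the normalizations.
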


\begin{proof}
We consider $\delta \in (0,1)$ and assume $K \ge c(\delta)$ so that (\ref{A.6}) holds. We also introduce the quantity (recall the notation (\ref{A.3}))
\begin{equation}\label{A.10}
\eta_K = \sup\limits_{L \ge 1} \;\; \sup\limits_{z_1, z_2 \in \IL \atop |z_1-z_2|_\infty \ge KL} \;\; \sup\limits_{x_1\in B_{z_1}, x_2 \in B_{z_2} \atop x'_1 \in \wh{B}_{z_1}, x'_2 \in \wh{B}_{z_2}} \;\;\max\Big\{ \dis\frac{\wt{g}(x_1,x_2)}{dg(x'_1,x'_2)} , \; \dis\frac{dg(x'_1,x'_2)}{\wt{g}(x_1,x_2)}\Big\},
\end{equation}
and note that by (\ref{A.1})
\begin{equation}\label{A.11}
\lim\limits_{K \r \infty} \; \eta_K = 1.
\end{equation}

\medskip\n
We now consider $\cC$, a non-empty subset of $\IL$ satisfying (\ref{A.5}), as well as $\wt{\mu}$ in (\ref{A.6}) and the finite measure $\mu$ on $\IR^d$:
\begin{equation}\label{A.12}
\mu(dy) = d \dsl_{B \in \cC} \wt{e}_C(B) \,\ov{e}_{\wh{B}}(dy), \;\mbox{so} \; \mu(\IR^d) = d \,\wt{e}_C(\IZ^d) = d \, \wt{\rm cap} (C)
\end{equation}

\medskip\n
(and $\ov{e}_{\wh{B}}(dy)$ stands for the normalized equilibrium measure of the $\IR^d$-filling $\wh{B}$ of $B$, i.e.~$\ov{e}_{\wh{B}}(dy) = e_{\wh{B}}(dy) / {\rm cap}(\wh{B})$).

\medskip
For $x \in \Gamma$ (see (\ref{A.4})), so that $x \in \wh{B}_*$ with $B_* \in \cC$, we choose $x_* \in B_*$, such that $|x - x_*|_\infty \le 1$. We then have
\begin{equation}\label{A.13}
\begin{array}{l}
\dis\int g(x,y) \, \mu(dy) = d \, \wt{e}_C(B_*) \, \dis\int g(x,y) \,\ov{e}_{\wh{B}_*}(dy)\;+ 
\\
\dsl_{B \not= B_*}  d \;\wt{e}_C(B) \dis\int g(x,y) \, \ov{e}_{\wh{B}}(dy).
\end{array}
\end{equation}
The first term in the right-hand side of (\ref{A.13}) equals (see (\ref{A.7}) for notation) $d \,\wt{e}_C(B_*) / {\rm cap}(\wh{B}_*) = \wt{e}_C(B_*) / \wt{\rm cap}(B_*) \,a_L$.

\medskip
By (\ref{A.10}), (\ref{A.11}), we see that on the one hand
\begin{equation}\label{A.14}
\begin{array}{lcl}
\dis\int g(x,y) \,\mu(dy) & \stackrel{(\ref{A.10})}{\le} & \dis\frac{\wt{e}_C(B_*)}{\wt{\rm cap}(B_*)} \;a_L + \eta_K \dsl_{B \not= B_*} \wt{e}_C(B) \dsl_y \wt{g}(x_*,y) \,\ov{\wt{e}}_B(y)
\\
\\[-1ex]
& \le & \max (a_L,\eta_K) \dsl_{B \in \cC} \wt{e}_C(B) \dsl_y \wt{g}(x_*,y) \, \ov{\wt{e}}_B(y)
\\
\\[-1ex]
& =& \max(a_L,\eta_K)  \dsl_y \wt{g}(x_*,y) \,\wt{\mu}(y)
\\
& \stackrel{(\ref{A.6})}{\le} & \max(a_L,\eta_K)  \;(1-\delta)^{-1} \dsl_y \wt{g}(x_*,y) \, \wt{e}_C(y) 
\\
\\[-2ex]
& =& \max(a_L,\eta_K)  \;(1-\delta)^{-1}  \;\; \mbox{(since $x_* \in B_* \subseteq C$)}.
\end{array}
\end{equation}
In a similar fashion, we see that
\begin{equation}\label{A.15}
\begin{array}{lcl}
\dis\int g(x,y) \,\mu(dy) &\ge  & \min (a_L,\eta_K^{-1})  \dsl_y \wt{g}(x_*,y) \, \wt{\mu}(y) 
\\
\\[-2ex]
& \stackrel{(\ref{A.6})}{\ge} & \min(a_L,\eta_K^{-1})  \;(1+\delta)^{-1} \dsl_y \wt{g}(x_*,y) \, \wt{e}_C(y) \qquad \;\;
\\
\\[-2ex]
& =& \min(a_L,\eta_K^{-1})  \;(1+\delta)^{-1} .
\end{array}
\end{equation}

\medskip\n
Note that $\mu$ is supported by $\Gamma$, see (\ref{A.12}), and (\ref{A.15}) holds for arbitrary $x$ in $\Gamma$. It then follows by integration of (\ref{A.14}) and (\ref{A.15}) with respect to the equilibrium measure of $\Gamma$ that
\begin{equation}\label{A.16}
\begin{array}{l}
(1-\delta)^{-1} \max(a_L, \eta_K)\,{\rm cap}(\Gamma) \ge\mu(\IR^d) \stackrel{(\ref{A.12})}{=}  
\\
d\; \wt{\rm cap}(C) \ge (1 + \delta)^{-1} \min(a_L, \eta_K^{-1}) \,{\rm cap}(\Gamma).
\end{array}
\end{equation}

\n
Since $\delta \in (0,1)$ is arbitrary, the claims (\ref{A.8}) and (\ref{A.9}) readily follow by (\ref{A.7}) and (\ref{A.11}).
\end{proof}

\end{document}